\documentclass[11pt,english]{amsart}
\usepackage{amsmath,amssymb,amsfonts,amstext,amsthm}

\usepackage{babel}
\usepackage{latexsym}
\usepackage{ifthen}
\usepackage{xypic}

\usepackage[T1]{fontenc}
\usepackage{mathpazo}
\usepackage{hyperref}
\usepackage[paper = a4paper, 
left = 2.5cm, 
right = 2.5cm, 
top = 3cm, 
bottom = 2.95cm]{geometry}

\newcommand\sE{{\mathcal E}}

\newcommand\sF{{\mathcal F}}
\newcommand\sPS{{\mathcal PS}} 
\newcommand\sG{{\mathcal G}}

\newcommand\sI{{\mathcal I}}

\newcommand\sO{{\mathcal O}}
\newcommand\sS{{\mathcal S}}

\newcommand\sA{{\mathcal A}}

\newcommand\bQ{{\mathbb Q}}

\newcommand\bP{{\mathbb P}}

\newtheorem{theorem}{Theorem}[section]
\newtheorem{lemma}[theorem]{Lemma}
\newtheorem{proposition}[theorem]{Proposition}
\newtheorem{remark}[theorem]{Remark}
\newtheorem{corollary}[theorem]{Corollary}

\newtheorem{problem}[theorem]{Problem}

\newtheorem{conjecture}[theorem]{Conjecture}

\newtheorem{acknowledgement}[theorem]{Acknowledgement}

\theoremstyle{definition}
\newtheorem{example}[theorem]{Example}
\newtheorem{definition}[theorem]{Definition}

\setlength{\parindent}{0ex}
\setlength{\parskip}{\medskipamount}

\begin{document} 

\title{Generically nef vector bundles and geometric applications}
\author{Thomas Peternell}

\address{Th. Peternell - 
Mathematisches Institut - Universit\"at Bayreuth - D-95440 Bayreuth, Germany} 
\email{thomas.peternell@uni-bayreuth.de}

\date{\today}
\maketitle
\tableofcontents


\begin{abstract}{The cotangent bundle of a non-uniruled projective manifold is generically nef, due to a theorem of Miyaoka. 
We show that the cotangent bundle is actually generically ample, if the manifold is of general type and study in
detail the case of intermediate Kodaira dimension. Moreover, manifolds with  generically nef and ample tangent bundles
are investigated as well as connections to classical theorems on vector fields on projective manifolds.}
\end{abstract} 

\section{Introduction} \label{Intro} 
Given a vector field $v$ on a complex projective manifold $X$, a classical theorem of Rosenlicht says that 
$X$ is uniruled, i.e., $X$ covered by rational curves, once $v$ has a zero. If on the other hand $v$ does not vanish at any point,
Lieberman has shown that there is a finite \'etale cover $\pi: \tilde X \to X$ and a splitting 
$$\tilde X \simeq  A \times Y$$
with an abelian variety $A$ such that the vector field $\pi^*(v)$ comes from a vector field on $A.$  In particular, if $X$ 
is of general type, then $X$ does not carry any non-zero vector field. \\
For various reasons it is interesting to ask 
what happens if $v$ is a section in $S^mT_X,$ or $(T_X)^{\otimes m},$ or even more general, in $(T_X)^{\otimes m} \otimes L$
with a numerically trivial line bundle $L$ on $X.$ In particular, one would like to have a vanishing 
\begin{equation} \label{eq0} H^0(X,(T_X)^{\otimes m} \otimes L) = 0 \end{equation} 
if $X$ is of general type and ask possibly for structure results in case $X$ is not uniruled. 
The question whether the above vanishing holds was communicated to me by N.Hitchin.
The philosohical reason for the vanishing is quite clear: if $X$ is of general type, then the cotangent bundle $\Omega^1_X$ 
should have some ampleness properties. One way to make this precise is to say that the restriction $\Omega^1_X \vert C$ is ample on 
sufficiently 
general curve $C \subset X.$ \\
There are two things to be mentioned immediately. First, a fundamental theorem of Miyaoka says that $\Omega^1_X \vert C$ 
is nef on the general curve; we say shortly that $\Omega^1_X $ is {\it generically nef}. Second, if $K_X$ is ample,
then $X$ admits a K\"ahler-Einstein metric, in particular $\Omega^1_X $ is stable and consequently $\Omega^1_X \vert C$ is
stable, from which it is easy to deduce that $\Omega^1_X \vert C$ is ample. 
\vskip .2cm We therefore ask under which conditions the cotangent bundle of a non-uniruled manifold is {\it generically ample. }
We show, based on \cite{BCHM09}, \cite{Ts88} and \cite{En88}, that generic ampleness indeed holds if $X$ is of general type, implying the vanishing \ref{eq0}.
We also give various results in case $X$ is not of general type, pointing to a generalization
of Lieberman's structure theorem. In fact, ``most'' non-uniruled varieties have generically ample cotangent bundles. Of course, if
$K_X$ is numerically trivial, then the cotangent bundle cannot be generically ample, and some vague sense, this should be the only
reason, i.e. if $\Omega^1_X$ is not generically ample, then in some sense $X$ should split off a variety with numerically
trivial canonical sheaf. However certain birational transformations must be allowed as well as \'etale cover. Also it is advisable
to deal with singular spaces as they occur in the minimal model program. One geometric reason for this picture is the fact that
a non-uniruled manifold $X$, whose cotangent bundle is not generically ample, carries in a natural way a foliation $\sF$ whose
determinant $\det \sF$ is numerically trivial (we assume that $K_X$ is not numerically trivial). If $\sF$ is chosen suitably, its leaves should 
be algebraic and lead to a decomposition of $X.$  
Taking determinants, we obtain
a section in $\bigwedge^q T_X \otimes L$ for some numerically trivial line bundle $L,$ giving the connection to the discussion we started with. 
\vskip .2cm 
The organization of the paper is as follows. We start with a short section on the movable cone, because the difference between the
movable cone and the ``complete intersection cone'' is very important in the framework of generic nefness. We also give an
example where the movable cone and the complete intersection cone differ (worked out with J.P.Demailly). In section 3 we discuss
in general the concept of generic nefness and its relation to stability. The following section is devoted to the study of 
generically ample cotangent bundles. In the last part we deal with generically nef tangent bundles and applications to 
manifolds with nef anticanonical bundles.

\section{The movable cone} 
\label{sec:1}

We fix a normal projective variety $X$ of dimension $n.$ Some notations first. Given ample line bundles
$H_1, \ldots, H_{n-1}, $ we set $h = (H_1, \ldots, H_{n-1})$ and simply say that $h$ is an ample class. 
We let 
$$ NS(X) = N^1(X) \subset  H^2(X,\mathbb R) $$
be the subspace generated by the classes of divisors and 
$$ N_1(X) \subset  H^{2n-2}(X,\mathbb R)  $$
be the subspace generated by the classes of curves.

\begin{definition}
\label{def1} \begin{enumerate} 
\item The {\it ample cone} $\sA$ is the open cone in $N^1(X)$ generated by the classes of ample line bundles, its closure is the 
{\it nef cone.}
\item The {\it pseudo-effective cone} $\sPS$ is the closed cone in $N^1(X)$ of classes of effective divisors. 
\item The {\it movable cone} $\overline{ME}(X) \subset N_1(X)$ 
is the closed cone generated by classes of curves of the form
$$  C = \mu_*(\tilde H_1 \cap \ldots \cap \tilde H_{n-1}) ; $$
here $\mu: \tilde X \to X$ is any modification from a pojective manifold $X$ and $\tilde H_i$ are very ample divisors in $\tilde X.$ 
These curves $C$ are called strongly movable. 
\item $\overline {NE}(X) \subset N_1(X) $ is the closed cone generated by the classes of irreducible curves. 
\item An irreducible curve $C$ is called movable, if $C = C_{t_0}$ is a member of a family $(C_t)$ of curves such that 
$X = \bigcup_t C_t.$ The closed cone generated by the classes of movable curves is denoted by $\overline {ME}(X).$
\item The complete intersection cone $\overline {CI}(X)$ is the closed cone generated by classes  $h = (H_1, \ldots, H_{n-1})$ with $H_i$ ample. 
 
\end{enumerate}
\end{definition}

Recall that a line bundle $L$ is {\it pseudo-effective} if $c_1(L) \in \sPS(X).$ The pseudo-effective line bundles are exactly those line bundles carrying a
singular hermitian metric with positive curvature current; see \cite{BDPP04} for further information.

\begin{example}
\label{ex1} {\rm We construct a smooth projective threefold $X$ with the property
$$ \overline{ME}(X) \ne \overline{CI}(X). $$ 
This example has been worked out in \cite{DP07}. 
We will do that by constructing on $X$ a line bundle which is on the boundary of the pseudo-effective cone, but strictly positive on 
$ \overline{CI}(X).$
\vskip .2cm \noindent
We choose two different points $p_1, p_2 \in \bP_2$ and consider a rank 2-vector bundle $E$ over $\bP_2$,  given as an extension 
\begin{equation} \label{eq1} 0 \to \sO \to E \to \sI_{\{p_1,p_2\}}(-2) \to 0  \end{equation}
(see e.g. [OSS80]). Observe $c_1(E) = -2;$ $c_2(E) = 2.$ Moreover, if $l \subset \bP_2$ is
the line through $p_1$ and $p_2$, then 
\begin{equation} \label{eq2} E \vert l = \sO(2) \oplus \sO(-4). \end{equation}
Set
$$ X = \bP(E)$$ with
tautological line bundle $$L = \sO_{\bP(E)}(1). $$ 
First we show that $L$ is strictly positive on $\overline{CI(X)}.$   In fact, fix the unique positive real number $a$ such that 
$$ L +  \pi^*(\sO(a))$$
is nef but not ample. Here  $\pi: X \to \bP_2$ is the projection. 
Notice that $a \geq 4$ by Equation \ref{eq2}. The nef cone of $X$ is easily seen to be 
generated by $\pi^*\sO(1)$ and $L+\pi^*\sO(a)$, hence
$ \overline{CI}(X)$ is a priori spanned by the three classes 
$(L + \pi^*(\sO(a))^2$, $\pi^*(\sO(1))^2$ and 
$\pi^*(\sO(1))\cdot(L + \pi^*(\sO(a))$. However
$$
L^2=c_1(E)\cdot L-c_2(E)=-2\pi^*\sO(1)\cdot L-2\pi^*\sO(1)^2.
$$
thus
$$(L + \pi^*(\sO(a))^2=
(2a-2)\pi^*\sO(1)\cdot L+(a^2-2)\pi^*\sO(1)^2,$$
and as $(a^2-2)/(2a-2)<a$ we see that 
$$\pi^*(\sO(1))\cdot(L + \pi^*(\sO(a))$$
is a positive linear combination of $(L + \pi^*(\sO(a))^2$ and
$\pi^*(\sO(1))^2$. Therefore the boundary of $ \overline{CI}(X)$ is spanned by 
$(L + \pi^*(\sO(a))^2$ and $\pi^*(\sO(1))^2\,$.
Now, using $a \geq 4$, we have
$$ L \cdot (L + \pi^*(\sO(a))^2=2-4a+a^2 \geq 2$$
and
$$L \cdot \pi^*(\sO(1))^2 = 1,$$
hence $L$ is strictly positive on $ \overline{CI}(X).$  
 
\vskip .2cm \noindent 
On the other hand, $L$ is effective since $E$ has a section, but it is clear 
from the exact sequence \ref{eq1}  that $L$ must be on the boundary of the 
pseudo-effective cone$\,$; otherwise $L - \pi^*(\sO(\epsilon)) $ would be 
effective (actually big) for small positive $\epsilon$). This is absurd 
because the tensor product of the exact sequence \ref{eq1} by $\sO(-\epsilon)$ realizes the
$\bQ$-vector bundle $E\otimes \sO(-\epsilon)$ as an extension of two 
strictly negative sheaves (take symmetric products to avoid
$\bQ$ coefficients$\,$!). Therefore $L$ cannot be strictly positive 
on $\overline {ME}(X)$.
}

\end{example} 

The fact that $\overline{ME}(X)$ and $\overline{CI}(X)$ disagree in general is very unpleasant and creates a lot of technical troubles.

It is a classical fact that the dual cone of $\overline{NE}(X)$ is the nef cone; the main result of \cite{BDPP04} determines the 
dual cone to the movable cone:

\begin{theorem} \label{dualitytheorem} 
The dual cone to $\overline{ME}(X)$ is the pseudo-effective cone $\sPS(X)$.  Moreover $\overline{ME}(X)$ is the closed cone 
generated by the classes of movable curves. 
\end{theorem}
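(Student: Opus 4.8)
The plan is to reduce the statement to the main duality theorem of \cite{BDPP04}, which I would not reprove from scratch; instead I would explain the two ingredients that turn that analytic result into the present cone-duality form. The theorem asserts two things: (i) the dual cone of $\overline{ME}(X)$ (the closed cone of strongly movable curves, Definition~\ref{def1}(3)) is $\sPS(X)$; and (ii) this cone coincides with $\overline{ME}(X)$ as defined via movable curves in Definition~\ref{def1}(5). I would treat (i) first and deduce (ii) as a consequence.

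For (i), the key point is a duality between two closed convex cones in finite-dimensional dual vector spaces $N^1(X)$ and $N_1(X)$, so it suffices to prove $\sPS(X)^\vee = \overline{ME}(X)$ and then take duals, using that $\overline{ME}(X)$ is closed and that the bidual of a closed convex cone is itself. The inclusion $\overline{ME}(X) \subseteq \sPS(X)^\vee$ is the easy direction: a pseudo-effective line bundle $L$ carries a singular hermitian metric with positive curvature current (as recalled after Definition~\ref{def1}), and for a strongly movable curve $C = \mu_*(\tilde H_1 \cap \cdots \cap \tilde H_{n-1})$ one computes $L \cdot C = (\mu^* L) \cdot \tilde H_1 \cdots \tilde H_{n-1} \geq 0$ because the positivity current pulls back and integrates nonnegatively against the movable complete-intersection class on $\tilde X$ (a general member of the linear systems avoids the singular locus of the metric). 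The hard direction, $\sPS(X)^\vee \subseteq \overline{ME}(X)$, is precisely the deep content of \cite{BDPP04}: if a class $\alpha \in N^1(X)$ satisfies $\alpha \cdot C \geq 0$ for every strongly movable curve $C$, then $\alpha$ is pseudo-effective. I would simply invoke this; its proof rests on the Hahn--Banach separation of a non-pseudo-effective class from $\sPS(X)$ together with a volume/mass estimate for the positive part in a divisorial Zariski decomposition, showing that any class strictly separated from $\sPS(X)$ pairs negatively with some movable complete intersection.

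For (ii), one inclusion is immediate: every strongly movable curve is movable (the family $\mu_*(\tilde H_1 \cap \cdots \cap \tilde H_{n-1} \text{ as the } \tilde H_i \text{ vary})$ covers $X$), so $\overline{ME}(X) \subseteq \overline{ME}(X)_{\mathrm{mov}}$ using the two notations of the definition. Conversely, if $C$ is a movable curve, so $X = \bigcup_t C_t$, then $C \cdot L \geq 0$ for every pseudo-effective $L$ (a general $C_t$ meets the complement of the non-nef locus, or one argues via the metric as above), hence the class of $C$ lies in $\sPS(X)^\vee$, which by part (i) equals $\overline{ME}(X)$. Taking the closed cone generated by all such classes yields the reverse inclusion.

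The main obstacle is genuinely the hard direction of (i): $\sPS(X)^\vee \subseteq \overline{ME}(X)$. Everything else is formal convex duality plus the elementary observation that movable curves pair nonnegatively with positive currents. I would therefore structure the write-up as: a short lemma giving $\overline{ME}(X) \subseteq \sPS(X)^\vee$ and $\overline{ME}(X)_{\mathrm{mov}} \subseteq \sPS(X)^\vee$ by the current-pullback argument; a citation of \cite{BDPP04} for the reverse inclusion of the first; and a one-line bidual argument to conclude $\sPS(X)^\vee{}^\vee = \overline{ME}(X) = \sPS(X)^\vee{}^\vee$, whence both $\sPS(X) = \overline{ME}(X)^\vee$ and the identification of the movable-curve cone with $\overline{ME}(X)$.
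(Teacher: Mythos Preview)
Your proposal is correct and aligns with the paper's treatment: the paper does not give its own proof of this theorem at all, but simply states it as ``the main result of \cite{BDPP04}''. Your write-up is more detailed than the paper's bare citation---you spell out the easy inclusions and the bidual argument that reduce everything to the hard direction of \cite{BDPP04}---but the underlying approach is the same: invoke \cite{BDPP04} for the non-trivial content.
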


It is not clear whether the dual cone to $\overline{CI}(X)$ has a nice description. Nevertheless we make the following

\begin{definition} \label{defgenericneflinebundles}
A line bundle $L$ is {\it generically nef} if $L \cdot h \geq 0$ for all ample classes $h.$
\end{definition}  

In the next section we extend this definition to vector bundles. Although generically nef line bundles are in general 
not pseudo-effective as seen in Example \ref{ex1}, this might still be true for the canonical bundle:

\begin{problem} Let $X$ be a projective manifold or a normal projective variety with (say) only canonical singularities. 
Suppose $K_X$ is generically nef. Is $K_X $ pseudo-effective?
\end{problem} 

In other words, suppose $K_X$ not pseudo-effective, which is the same as to say that $X$ is uniruled. Is there an ample class $h$
such that $K_X \cdot h < 0?$  This is open even in dimension 3; see \cite{CP98} for some results.

\section{Generically nef vector bundles}
\label{sec:2}

In this section we discuss generic nefness of general vector bundles and torsion free coherent sheaves.

\begin{definition} 
\label{basicdef} 
\begin{enumerate} 
\item Let $h = (H_1, \ldots, H_{n-1})$ be an ample class. 
A vector bundle $\sE$ is said to be $h-$  generically nef (ample), if $\sE \vert C$ is
nef (ample) for a general curve $C = D_1 \cap \ldots \cap D_{n-1}$ for general $D_i \in \vert m_i H_i \vert $ and $m_i \gg 0.$ 
Such a curve is called MR-general, which is to say ``general in the sense of Mehta-Ramanathan''. 
\item The vector bundle $\sE$ is called generically nef (ample), if $\sE$ is  $(H_1, \ldots, H_{n-1}) -$ generically nef (ample) for all $H_i.$
\item  $\sE$ is almost nef \cite{DPS01}, if there is a countable union $S$ of algebraic subvarieties such $\sE \vert C$ is nef for all curves
$C \not \subset S.$ 
\end{enumerate} 
\end{definition}

\begin{definition} Fix an ample class $h$ on a projective variety $X$ and let $\sE$ be a vector bundle on $X$. Then we define the slope
$$ \mu_h(\sE) = c_1(\sE) \cdot h  $$
and obtain the usual notion of (semi-)stability w.r.t. $h.$ 
\end{definition} 

The importance of the notion of MR-generality comes from Mehta-Ranamathan's theorem \cite{MR82}

\begin{theorem} \label{MR} Let $X$ be a projective manifold (or a normal projective variety) and $\sE$ a locally free sheaf on $X$.
Then $\sE$ is semi-stable w.r.t. $h$ if and only $\sE \vert C$ for $C$ MR-general w.r.t.  $h$.
\end{theorem}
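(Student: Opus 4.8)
The plan is to prove the two implications separately; the forward direction, that semistability of $\sE$ forces semistability of $\sE \vert C$, carries the content. For the converse, suppose $\sE$ is not semistable w.r.t. $h$ and choose a destabilizing subsheaf $\sF \subset \sE$, so $c_1(\sF)\cdot h / \rk \sF > c_1(\sE)\cdot h / \rk \sE$. For any complete intersection curve $C = D_1 \cap \cdots \cap D_{n-1}$ with $D_i \in |m_i H_i|$ one has $\deg_C(\sF \vert C) = (m_1\cdots m_{n-1})\, c_1(\sF)\cdot h$, and likewise for $\sE$; and for a general such $C$, disjoint from the codimension $\geq 2$ loci where $\sE/\sF$ has torsion or where $\sF$ fails to be locally free, the sheaf $\sF \vert C$ is a subsheaf of $\sE \vert C$ of strictly larger (normalized) slope. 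Hence $\sE \vert C$ is unstable for every MR-general $C$, which is the contrapositive of what is wanted.

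For the forward direction I would induct on $n = \dim X$, the essential case being a single restriction to a general hypersurface of high degree: if $\sE$ is semistable on $X$ w.r.t. $(H_1,\ldots,H_{n-1})$, then for $m \gg 0$ and general $Y \in |mH_1|$ the restriction $\sE \vert Y$ is semistable on $Y$ w.r.t. $(H_2 \vert Y,\ldots,H_{n-1}\vert Y)$. Granting this, one iterates $n-1$ times — using Bertini, or Seidenberg's theorem when $X$ is only normal, to keep the successive general members irreducible and normal — and arrives at an MR-general curve, along which semistability is exactly the assertion.

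To establish the hypersurface step, assume for contradiction that for all large $m$ the bundle $\sE \vert Y$ is unstable for general $Y \in |mH_1|$. Pass to the universal family $q : \mathcal{Y} \to X$, $p : \mathcal{Y} \to U$ over a dense open $U \subset |mH_1|$ of irreducible members. On each fibre $Y_u$ the first step of the Harder--Narasimhan filtration gives a saturated destabilizing subsheaf $\sF_u \subset \sE\vert Y_u$; shrinking $U$, these acquire constant rank and, by flatness together with properness of the relative Quot scheme, glue to a coherent subsheaf $\sF \subset q^*\sE$, flat over $U$, with $\sF\vert Y_u = \sF_u$. The heart of the matter, and the only place $m \gg 0$ is genuinely used, is that $\sF$ must be the pullback $q^*\tilde\sF$ of a subsheaf $\tilde\sF \subset \sE$ on $X$: because the members $Y_u$ sweep out $X$ and carry $\sO_X(-mH_1)$ as their ideal sheaf, Serre vanishing of groups of the form $H^1\bigl(X, \mathcal{H}om(-,-)\otimes\sO_X(-mH_1)\bigr)$ rigidifies the family $\{\sF_u\}_{u \in U}$ and lets the common subsheaf extend across all of $X$. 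Once this is done, $c_1(\tilde\sF)\cdot(H_1\cdots H_{n-1})/\rk\tilde\sF$ equals, up to the positive factor $m$, the slope $\mu(\sF_u) > \mu(\sE\vert Y_u)$, so $\tilde\sF$ destabilizes $\sE$ — a contradiction.

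The main obstacle is precisely this descent: understanding how the destabilizing subsheaf $\sF_u$ varies with $u$ and proving that it is rigid and extends, which is where the relative Quot scheme bookkeeping and the vanishing theorems must be combined with care. A second, genuinely harder point is the final stage of the induction — restricting from a surface to a curve — because the Picard number of the curves $Y_u$ may jump with $u$, so the Lefschetz-type rigidity used above is unavailable; one then has to bound directly the finitely many numerical possibilities for a destabilizing subsheaf on the members of the family and exclude each of them, which is the delicate analysis carried out by Mehta and Ramanathan.
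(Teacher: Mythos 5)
You should first be aware that the paper contains no proof of this statement: it is the Mehta--Ramanathan restriction theorem, quoted directly from \cite{MR82} precisely so that it can be used as a black box, so there is no in-paper argument to measure yours against. Judged on its own, your proposal is a faithful road map of the original proof rather than a proof. The converse direction is correct and complete in outline: a destabilizing subsheaf $\sF\subset\sE$ restricts, on a general complete intersection curve avoiding the codimension $\geq 2$ locus where $\sF$ fails to be a subbundle, to a destabilizing subsheaf of $\sE\vert C$, and the common positive factor $m_1\cdots m_{n-1}$ preserves the strict slope inequality. The architecture of the forward direction --- induction on dimension, reduction to a single restriction to a general member of $\vert mH_1\vert$ for $m\gg 0$, contradiction via descent of the relative maximal destabilizing subsheaf over the incidence variety --- is exactly Mehta and Ramanathan's.

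The gap sits at the step you yourself call the heart of the matter. Serre vanishing of groups $H^1(X,\Hom(-,-)\otimes\sO_X(-mH_1))$ does not by itself rigidify the family $\{\sF_u\}$: before any extension problem can even be posed, one must know that $\sF_u$ and $\sF_{u'}$ agree where two members meet --- equivalently, that the subspace of the generic fibre of $\sE$ determined by $\sF_u$ is independent of $u$, so that $\sF$ descends along $q$. In \cite{MR82} this is obtained from the uniqueness of the maximal destabilizing subsheaf combined with an inductive comparison of the restrictions of $\sF_u$ and $\sF_{u'}$ to the complete intersection $Y_u\cap Y_{u'}$, and it is preceded by a separate boundedness argument showing that the normalized slopes of the destabilizing subsheaves stabilize as $m\to\infty$; without that, your closing comparison ``up to the positive factor $m$'' is circular, since a priori the slope of $\sF_u$ need not be of the form $m\cdot(\hbox{class on }X)\cdot h/\rk$ until after the extension to $X$ is established. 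You also explicitly delegate the surface-to-curve case to the original paper. None of this makes the outline wrong, but the two steps you leave out are the theorem; as submitted, the proposal proves only the elementary direction.
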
 

As a consequence one obtains 

\begin{corollary}  If $\sE$ is semi-stable w.r.t. $h$ and if 
$c_1(\sE) \cdot h \geq 0,$  then $\sE$ is generically nef w.r.t. $h$; in case of stability $\sE$ is even generically ample.
If $c_1(\sE) \cdot  h =  0,$
the converse also holds. 
\vskip .2cm \noindent
\end{corollary}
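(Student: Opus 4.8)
The plan is to reduce everything to the Mehta--Ramanathan theorem (Theorem \ref{MR}) together with the elementary fact that on a smooth projective curve a semi-stable bundle of non-negative degree is nef, and a stable bundle of positive degree is ample. First I would fix the ample class $h = (H_1, \ldots, H_{n-1})$ and take an MR-general curve $C = D_1 \cap \ldots \cap D_{n-1}$, with $D_i \in |m_i H_i|$ and $m_i \gg 0$. Since $\sE$ is semi-stable w.r.t.\ $h$, Theorem \ref{MR} gives that $\sE|C$ is semi-stable on $C$. The degree of $\sE|C$ is, up to the positive factor $m_1 \cdots m_{n-1}$, equal to $c_1(\sE) \cdot h$, hence non-negative by hypothesis; in the stable case one gets $\sE|C$ stable of positive degree.

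Next I would invoke the curve case of the main inequality: a semi-stable vector bundle $F$ on a smooth projective curve with $\deg F \geq 0$ is nef. The quickest argument is via the Harder--Narasimhan/Hartshorne criterion: a bundle on a curve is nef iff every quotient bundle has non-negative degree; for a semi-stable $F$ every quotient has slope $\geq \mu(F) = \deg F / \rk F \geq 0$, hence non-negative degree. If moreover $F$ is stable and $\deg F > 0$, every proper quotient has slope $> 0$ and thus positive degree, so $F$ is ample (again by Hartshorne's criterion, ampleness on a curve being equivalent to positivity of degrees of all quotients). Applying this to $F = \sE|C$ yields that $\sE|C$ is nef (resp.\ ample), which by Definition \ref{basicdef} is exactly $h$-generic nefness (resp.\ $h$-generic ampleness) of $\sE$.

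For the converse in the borderline case $c_1(\sE) \cdot h = 0$: suppose $\sE$ is generically nef w.r.t.\ $h$ but not semi-stable. Let $\sF \subset \sE$ be the maximal destabilizing subsheaf, so $\mu_h(\sF) > \mu_h(\sE) \cdot \rk\sF / \rk\sE$ in the appropriate normalization; concretely $c_1(\sF)\cdot h > 0$ since $c_1(\sE)\cdot h = 0$. Restricting to an MR-general curve $C$ (general enough to also be MR-general for $\sF$ and for the quotient $\sE/\sF$, and to avoid the finitely many subvarieties where $\sF$ fails to be a subbundle), the quotient $(\sE/\sF)|C$ has degree $-\deg(\sF|C) < 0$, so $(\sE/\sF)|C$ has a quotient line bundle of negative degree, contradicting nefness of $\sE|C$. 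Hence $\sE$ is semi-stable.

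I expect the main technical point to be the converse direction, specifically making sure a single MR-general curve can be chosen to be simultaneously MR-general for $\sE$, for the destabilizing subsheaf $\sF$, and for the quotient, and to lie in the locus where $\sF$ is a subbundle so that $(\sE/\sF)|C$ is genuinely a quotient bundle of $\sE|C$; this is handled by Mehta--Ramanathan's theorem applied to each sheaf together with Bertini, since each condition excludes only a proper (or lower-dimensional) closed subset of the relevant linear system. The forward direction is essentially immediate from Theorem \ref{MR} and the curve case.
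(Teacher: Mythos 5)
Your proof is correct and rests on the same two pillars as the paper's argument, namely the Mehta--Ramanathan restriction theorem (Theorem \ref{MR}) plus a curve-level positivity criterion, but the curve-level input is different. The paper deduces both directions in one stroke from Miyaoka's characterization of semi-stability on a curve (the Proposition immediately following the Corollary): for $F=\sE|_C$ semi-stable, $F\otimes \det F^*/r$ is nef, and twisting back by the $\bQ$-line bundle $\det F/r$ of non-negative degree gives nefness of $F$; when $\deg F=0$ the twist is numerically trivial, so the equivalence runs in both directions, which is exactly the asserted converse. You instead use the Hartshorne degree criterion (nef, resp.\ ample, on a curve in characteristic zero iff every quotient bundle has non-negative, resp.\ positive, degree) for the forward direction, and prove the converse by restricting the maximal destabilizing subsheaf $\sF\subset\sE$; both routes work, and yours has the merit of not needing the ``only if'' half of Miyaoka's criterion. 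Three small remarks. First, in the converse you do not need the intermediate claim that $(\sE/\sF)|_C$ ``has a quotient line bundle of negative degree'' (which is not immediate); it is enough that $(\sE/\sF)|_C$ is itself a quotient bundle of the nef bundle $\sE|_C$ with negative degree, which already violates the degree criterion. Second, your sentence ``in the stable case one gets $\sE|_C$ stable of positive degree'' does not follow from $c_1(\sE)\cdot h\ge 0$: a stable bundle with $c_1(\sE)\cdot h=0$ restricts with degree zero and is then not ample, so the ampleness assertion tacitly requires $c_1(\sE)\cdot h>0$ --- an imprecision already present in the statement of the Corollary, but worth flagging since your argument genuinely uses $\deg F>0$. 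Third, the stable case uses the restriction theorem for \emph{stable} sheaves, which is a separate (later) result of Mehta--Ramanathan not literally contained in Theorem \ref{MR}; the paper is equally silent on this point.
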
 

The proof of Corollary 3.4 follows immediately from Miyaoka's characterization of semi-stable bundle on curves:

\begin{proposition} Let $C$ be a smooth compact curve and $\sE$ a vector bundle on $C$. Then $\sE$ is semi-stable if and only if the
$\mathbb Q-$bundle 
$$ \sE \otimes {\det \sE^* \over {r}} $$
is nef. 
\end{proposition}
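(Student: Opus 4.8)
The plan is to prove both directions via the Harder--Narasimhan filtration on a curve and the basic fact that on a smooth curve a vector bundle of degree $\geq 0$ that is semistable is nef, while a bundle with a quotient of negative degree is not nef. Set $r = \rk \sE$ and $d = \deg \sE = \deg(\det \sE)$, and write $\sF = \sE \otimes (\det \sE^*)^{1/r}$; note $\sF$ is a genuine bundle only after passing to a suitable root, but since nefness is tested against $\bQ$-twists this causes no trouble, and one has $\deg \sF = 0$ and $\sF$ semistable if and only if $\sE$ is semistable (tensoring by a line bundle preserves the slopes of all subsheaves up to a common shift).

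For the forward direction, assume $\sE$ is semistable, hence $\sF$ is semistable of degree $0$. Every quotient bundle $\sF \twoheadrightarrow \sQ$ then has $\deg \sQ \geq \mu(\sF) \cdot \rk\sQ = 0$, so in particular every quotient line bundle has non-negative degree. By the standard characterization of nef bundles on curves — a bundle on a smooth projective curve is nef precisely when every quotient line bundle (equivalently, every quotient bundle) has non-negative degree — this gives that $\sF$ is nef. One clean way to package this is to invoke Hartshorne's criterion together with the fact that $\sO_{\bP(\sF)}(1)$ is nef iff it is non-negative on every curve in $\bP(\sF)$, and each such curve maps finitely to $C$ or lies in a fibre; the fibre case is trivial and the multisection case reduces to the degree inequality on the image.

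For the converse, suppose $\sE$ is \emph{not} semistable. Then $\sF$ is not semistable, and the maximal destabilizing quotient of $\sF$ — i.e.\ the last graded piece $\sQ$ of the Harder--Narasimhan filtration of $\sF$ — has slope $\mu(\sQ) < \mu(\sF) = 0$, hence $\deg \sQ < 0$. Since $\sQ$ is a quotient bundle of $\sF$, the line bundle $\sO_{\bP(\sF)}(1)$ restricted to $\bP(\sQ) \subset \bP(\sF)$ has a section vanishing nowhere... more precisely, $\det \sQ^*$ is a nef (even ample, as $\deg \sQ^* > 0$) quotient line bundle witnessing that $\sO_{\bP(\sF)}(1)$ is negative on some curve, so $\sF$ is not nef. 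Thus nefness of $\sF$ forces semistability of $\sE$.

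I expect the only real subtlety to be the bookkeeping around the $\bQ$-twist $\det \sE^*/r$: one should either pass to a finite cover or an $r$-th root where $(\det \sE^*)^{1/r}$ becomes an honest line bundle, or phrase nefness of a $\bQ$-bundle directly via $\sO_{\bP(\sE)}(r) \otimes \pi^*\det\sE^*$ on $\bP(\sE)$ and check that this is the tautological divisor of the twisted bundle. Once that identification is in place, both implications are immediate from the Harder--Narasimhan formalism and the elementary curve-case criterion for nefness, so the proof is short; no deformation theory or positivity beyond the curve case is needed.
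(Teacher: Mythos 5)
The paper does not prove this proposition; it is quoted as Miyaoka's classical characterization of semistable bundles on curves and used as a black box to deduce Corollary 3.4. So there is no in-paper argument to compare against, and your proposal has to stand on its own. Its overall architecture (reduce to the degree-zero twist $\sF=\sE\otimes\det\sE^*/r$, then run both directions through the Harder--Narasimhan formalism and the curve-case nefness criterion) is the standard and correct one.

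There is, however, one step where what you wrote is not quite the true statement and where the real content of the forward direction lives. Nefness of $\sF$ on a curve is \emph{not} equivalent to ``every quotient (line) bundle of $\sF$ has non-negative degree''; the correct criterion is that every quotient line bundle of $f^*\sF$ has non-negative degree for \emph{every} finite morphism $f:C'\to C$ from a smooth curve, because an irreducible curve in $\bP(\sF)$ dominating $C$ is a multisection and corresponds to a quotient line bundle of the pullback of $\sF$ to the normalization $C'$, not to a quotient of $\sF$ itself. Your phrase ``the multisection case reduces to the degree inequality on the image'' papers over exactly this point. To close it you must invoke that in characteristic zero semistability is preserved under pullback by finite morphisms of smooth curves, so $f^*\sF$ is again semistable of degree $0$ and its quotient line bundles have non-negative degree; this is precisely where the argument would break in positive characteristic (where one needs strong semistability, cf.\ Shepherd-Barron's work cited in the paper). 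In the converse direction the conclusion is right but the justification is garbled: $\det\sQ^*$ is not a quotient of anything relevant, and its ampleness witnesses nothing. The clean argument is that a quotient of a nef bundle is nef (restrict $\sO_{\bP(\sF)}(1)$ to $\bP(\sQ)\subset\bP(\sF)$) and the determinant of a nef bundle is nef, so every quotient of a nef bundle on a curve has non-negative degree; the minimal destabilizing quotient, having negative degree, then rules out nefness without your having to exhibit an explicit curve on which $\sO_{\bP(\sF)}(1)$ is negative (which is genuinely harder to do directly).
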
 

\begin{remark} Everything we said in this section remains true for coherent sheaves $\sE$ of positive rank $r$ which are locally free in 
codimension 1,
in particular for torsion free sheaves (the underlying variety being normal). \\
Recall that $\det \sE := (\bigwedge^r\sE)^{**}.$  
\end{remark} 

For later use we note the following obvious

\begin{lemma} \label{lemmasections} 
Let $X$ be an normal projective variety, $\sE$ a vector bundle or torsion free sheaf. 
\begin{enumerate}
\item If $\sE$ is $h-$generically ample for some $h,$ then $H^0(X,(\sE^*)^{\otimes m} \otimes L) = 0$ for all positive 
integers $m$ and all numerically trivial line bundles $L$ on $X.$ 
\item If $\sE$ is $h-$generically nef for some $h$ and $ 0 \ne s \in H^0(X,(\sE^*)^{\otimes m} \otimes L) = 0$
for some positive integer $m$ and some numerically trivial line bundle $L$, then $s$ does not have zeroes in codimension $1.$ 
\end{enumerate} 
\end{lemma}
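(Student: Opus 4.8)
The plan is to reduce both statements to the corresponding facts about nef and ample vector bundles on a single MR-general curve $C$, using the fact that a nonzero section of a vector bundle on a projective variety restricts to a nonzero section on a sufficiently general complete-intersection curve, provided the curve avoids the (at most codimension-one) locus where the section might vanish. For part (1): suppose $0 \ne s \in H^0(X,(\sE^*)^{\otimes m}\otimes L)$. Pick a MR-general curve $C = D_1 \cap \dots \cap D_{n-1}$ with $D_i \in |m_iH_i|$, $m_i \gg 0$, so that $\sE\vert C$ is ample; since the $D_i$ move in very ample (after $m_i \gg 0$) linear systems and $C$ is general, $C$ meets the complement of the zero locus of $s$, hence $s\vert C \ne 0$. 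Now $(\sE^*\vert C)^{\otimes m}\otimes (L\vert C)$ has a nonzero section. But $\sE\vert C$ ample implies $\sE^*\vert C$ is a quotient-free subbundle situation — more directly, $(\sE\vert C)^{\otimes m}$ is ample, so its dual $((\sE\vert C)^*)^{\otimes m}$ has negative-degree behavior on every quotient line bundle, i.e. it is "anti-ample"; tensoring by the degree-zero bundle $L\vert C$ keeps every quotient line bundle of negative degree, so $((\sE^*\vert C)^{\otimes m}\otimes L\vert C)$ cannot have a nonzero section. This contradiction gives the vanishing. One must be slightly careful that $L$ is only numerically trivial, not torsion, but numerical triviality of $L$ on $X$ forces $\deg(L\vert C) = L \cdot H_1 \cdots H_{n-1} \cdot(\prod m_i) = 0$, which is all that is needed on the curve.

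For part (2): suppose $s \ne 0$ vanishes on a divisor, i.e. its zero scheme contains a prime divisor $Z \subset X$. The section $s$ of $(\sE^*)^{\otimes m}\otimes L$ then factors through $(\sE^*)^{\otimes m}\otimes L \otimes \sO_X(-Z)$, giving $0 \ne s' \in H^0\bigl(X,(\sE^*)^{\otimes m}\otimes (L(-Z))\bigr)$. Choose $h = (H_1,\dots,H_{n-1})$ witnessing generic nefness and a MR-general curve $C$ for this $h$ with $\sE\vert C$ nef; taking $m_i \gg 0$ we may also assume $C$ is not contained in the (countably many, but here just finitely many relevant) bad loci, in particular $s'\vert C \ne 0$ and $C \not\subset Z$, so $\deg(\sO_X(Z)\vert C) = Z \cdot C > 0$. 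On $C$, nefness of $\sE\vert C$ means every quotient line bundle of $(\sE\vert C)^{\otimes m}$ has degree $\ge 0$, equivalently every sub-line-bundle of $((\sE\vert C)^*)^{\otimes m}$ has degree $\le 0$; tensoring by $L(-Z)\vert C$, which has strictly negative degree $-Z\cdot C < 0$, every sub-line-bundle of $((\sE^*\vert C)^{\otimes m}\otimes L(-Z)\vert C)$ has strictly negative degree, so this bundle has no nonzero section — contradicting $s'\vert C \ne 0$. Hence $s$ has no zeroes in codimension one.

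The only genuinely delicate point, and the one I would spell out carefully, is the assertion that a nonzero section of a vector bundle on $X$ stays nonzero after restriction to an MR-general curve: this needs that for $m_i \gg 0$ the linear systems $|m_iH_i|$ are base-point free (indeed very ample) and that a general member of each avoids any prescribed proper closed subset, so that the general complete intersection $C$ meets the dense open set where $s$ (resp. $s'$) is nonvanishing. For part (2) this is exactly where the codimension-one hypothesis is used: if $s$ vanished only in codimension $\ge 2$ we could not extract the divisor $Z$, and indeed the conclusion would be vacuous. Everything else is the standard dictionary between (semi)stability / nefness / ampleness of a bundle on a curve and the degrees of its sub- and quotient-line-bundles, which is immediate from Proposition 3.5 and the elementary theory of vector bundles on curves.
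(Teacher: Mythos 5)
The paper states this lemma without proof (it is labelled ``obvious''), and your argument is correct and supplies exactly the standard reasoning that is being taken for granted: restrict to an MR-general curve $C$ avoiding the bad loci, and use that a nonzero section of a bundle $F$ on a curve yields a sub-line-bundle of degree $\geq 0$ (resp.\ $>0$ if the section vanishes somewhere), which is incompatible with $F^*$ ample (resp.\ nef) even after twisting by the degree-zero bundle $L\vert C$. Your identification of the two delicate points --- that $s\vert C\neq 0$ for $C$ general, and that the codimension-one hypothesis in (2) is what produces the divisor $Z$ with $Z\cdot C>0$ --- is exactly right, so there is nothing to add.
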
 

Nef bundles satisfy many Chern class inequalities.  Miyaoka \cite{Mi87} has shown that at least one also holds for generically nef bundles, once the determinant is nef:

\begin{theorem} \label{thm3} Let $X$ be an $n-$dimensional normal projective variety which is smooth in codimension 2. Let $\sE$ be a torsion free sheaf which is generically nef 
w.r.t. the polarization $(H_1, \ldots, H_{n-1}). $ If $\det \sE$ is $\mathbb Q-$Cartier and nef, 
then $$c_2(X) \cdot H_1 \cdot \ldots \cdot H_{n-2} \geq 0.$$
\end{theorem}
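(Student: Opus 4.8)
The plan is to cut $X$ down to a surface and invoke Miyaoka's Chern class inequality there. For $m_i \gg 0$ pick general members $D_i \in |m_i H_i|$ for $i = 1, \dots, n-2$ and put $S = D_1 \cap \cdots \cap D_{n-2}$. Because $X$ is smooth in codimension $2$, its singular locus has dimension at most $n-3$, so a dimension count shows that a general such $S$ is a smooth surface contained in the smooth locus of $X$; by Bertini and repeated general hyperplane restriction $\sF := \sE|_S$ is a torsion-free sheaf on $S$, locally free off finitely many points. It then suffices to prove $c_2(\sF) \geq 0$, since the Chern number $c_2(\sE) \cdot H_1 \cdots H_{n-2}$ is a positive multiple of the degree of $c_2(\sF)$ on $S$ (this being how the number is defined, legitimately, as $S$ lies in the smooth locus). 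Finally $\det \sF = (\det \sE)|_S$ is nef, whence $c_1(\sF)^2 \geq 0$ because a nef divisor on a surface has nonnegative self-intersection, and by transitivity of MR-generality (Theorem \ref{MR}) the restriction $\sF|_C$ is nef for a general curve $C \subset S$.

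First I would extract the slope positivity carried by generic nefness. Consider the Harder--Narasimhan filtration $0 = \sF_0 \subset \sF_1 \subset \cdots \subset \sF_k = \sF$ with respect to $H = H_{n-1}|_S$, with $H$-semistable graded pieces $\sG_i = \sF_i/\sF_{i-1}$ of slopes $\mu_1 > \cdots > \mu_k$. The bottom piece $\sG_k$ is a quotient of $\sF$, so for a general $C \subset S$ the sheaf $\sG_k|_C$ is a quotient of the nef bundle $\sF|_C$ and hence has nonnegative degree; therefore $\mu_k = \mu_{\min}(\sF) \geq 0$, so all slopes satisfy $\mu_i \geq 0$. To each semistable piece I would then apply the Bogomolov--Gieseker inequality $2\,\rk(\sG_i)\,c_2(\sG_i) \geq (\rk(\sG_i)-1)\,c_1(\sG_i)^2$.

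The remaining step assembles the pieces. Writing $D_i = c_1(\sG_i)$ and using $c_1(\sF) = \sum_i D_i$, multiplicativity of the total Chern class along the filtration gives
$$ c_2(\sF) = \sum_i c_2(\sG_i) + \sum_{i<j} D_i \cdot D_j = \sum_i c_2(\sG_i) + \tfrac{1}{2}\Bigl( c_1(\sF)^2 - \sum_i D_i^2 \Bigr). $$
Substituting the Bogomolov bounds reduces $c_2(\sF) \geq 0$ to controlling $\sum_i D_i^2/\rk(\sG_i)$ against $c_1(\sF)^2$. This is where the Hodge index theorem enters, through $D_i^2 \leq (D_i \cdot H)^2/H^2$, and it must be combined with the nonnegativity and the strict monotonicity of the slopes $\mu_i$ together with $c_1(\sF)^2 \geq 0$. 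I expect this bookkeeping --- the core of Miyaoka's argument in \cite{Mi87} --- to be the main obstacle, the reduction to surfaces being comparatively formal.

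This yields $c_2(\sE) \cdot H_1 \cdots H_{n-2} \geq 0$. For the application the statement is aimed at, namely the cotangent sheaf $\sE = \Omega^1_X$ of a non-uniruled $X$, which is generically nef by Miyaoka and has nef determinant $\det \Omega^1_X = K_X$, the identity $c_2(\Omega^1_X) = c_2(T_X) = c_2(X)$ (Chern classes being insensitive to dualizing in even degree) turns the inequality just obtained into the asserted $c_2(X) \cdot H_1 \cdots H_{n-2} \geq 0$.
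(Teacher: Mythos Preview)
The paper gives no proof here; it simply notes that the inequality ``follows easily from \cite{Mi87}, Theorem 6.1.'' Your reduction to a surface followed by Harder--Narasimhan, Bogomolov, and Hodge index is exactly Miyaoka's strategy, and you are right that $c_2(X)$ is a misprint for $c_2(\sE)$.

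There is, however, a genuine gap --- and it lies in the statement rather than in your execution. Even with $c_2(\sE)$ in place of $c_2(X)$, the assertion is false as written. On $S=\bP_1\times\bP_1$ with $H=\sO(1,1)$, take $\sE=\sO(3,1)\oplus\sO(1,-1)$. Then $c_1(\sE)=\sO(4,0)$ is nef, and for any smooth $C\in|mH|$ the restriction $\sE\vert C$ splits into line bundles of degrees $4m$ and $0$, hence is nef; so $\sE$ is $H$-generically nef. Yet $c_2(\sE)=\sO(3,1)\cdot\sO(1,-1)=-2<0$. In your notation this is the case $r_1=r_2=1$, $D_1^2=6$, $D_2^2=-2$, $c_1(\sF)^2=0$, so the inequality you are aiming for, $c_1(\sF)^2\ge\sum_i D_i^2/r_i$, reads $0\ge 4$. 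The ``bookkeeping'' you defer therefore cannot be carried out in this generality; your instinct that this step is the obstacle was correct, but it is an obstruction rather than a difficulty.

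What Miyaoka's Theorem 6.1 actually yields is a (possibly negative) lower bound for $c_2(\sE)\cdot H_1\cdots H_{n-2}$ in terms of the Harder--Narasimhan slopes. Forcing this bound to be nonnegative requires more than nefness of $\det\sE$ together with generic nefness for a \emph{single} polarization --- for instance, generic nefness for polarizations approaching $c_1(\sE)$, which is available when $\sE=\Omega^1_X$ on a non-uniruled variety (Theorem~\ref{miy}) or $\sE=T_X$ with $-K_X$ nef. Your final paragraph, specialising to $\sE=\Omega^1_X$, is thus where the real argument lives; the general formulation printed in the paper needs an additional hypothesis before your outline can be completed.
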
 

This is not explicitly stated in \cite{Mi87}, but follows easily from ibid., Theorem 6.1. A Chern class inequality 
$$c_1^2(\sE) \cdot  H_1 \cdot \ldots \cdot H_{n-2} \geq c_2(\sE)  H_1 \cdot \ldots \cdot H_{n-2}$$
fails to be true: simply take a surface $X$ with $K_X$ ample and $c_1^2(X) < c_2(X)$ and let $\sE = \Omega^1_X$ (which is a generically nef
vector bundle, see the next section). Since generic nefness is a weak form of semi-stability, one might wonder wether there are Chern class inequalities
of type 
$$c_1(\sE)^2 \leq {{2r} \over {r-1}} c_2(\sE) \cdot h$$ 
(once $\det \sE$ is nef). In case $\sE = \Omega^1_X$, this is true, see again the next section.

\vskip .2cm 
If $\sE$ is a generically nef vector bundle, then in general there will through any given point many curves on which the bundle
is not nef. For an {\it almost nef} bundle (see Definition \ref{basicdef}), this will not be the case. Notice that in
case $\sE$ has rank $1,$ the notions ``almost nefness'' and ``pseudo-effectivity'' coincide. If a bundle is generically 
generated by its global sections, then $\sE$ is almost nef. Conversely, one has

\begin{theorem} Let $X$ be a projective manifold and $\sE$ a vector bundle on $X$. If $\sE$ is almost nef, then for any ample
line bundle $A$, there are positive numbers $m_0$ and $p_p0$ such that 
$$H^0(X, S^p((S^m\sE) \otimes A)) \ne 0$$
for $p \geq p_0$ and $m \geq m_0.$
\end{theorem}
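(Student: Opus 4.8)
The plan is to projectivize $\sE$ and reduce the statement to a positivity property of the tautological line bundle. Write $\pi\colon Y=\bP(\sE)\to X$ and $\sL=\sO_{\bP(\sE)}(1)$, normalized so that $\pi_*(\sL^{\otimes k})=S^k\sE$ for all $k\geq 0$; recall that in this convention $\sE$ is nef if and only if $\sL$ is nef. Since $A$ is ample and $\sL$ is $\pi$-ample, there is an integer $m_0\geq 1$ such that $H':=\sL^{\otimes m_0}\otimes\pi^*A$ is ample on $Y$. The first step will be to show that $\sL$ is pseudo-effective. Let $C\subset Y$ be a movable curve, i.e.\ the general member of a family $(C_t)$ with $\bigcup_t C_t=Y$, and let $S\subset X$ be the countable union of proper subvarieties provided by almost nefness of $\sE$. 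Then $\pi^{-1}(S)$ is a countable union of proper subvarieties of $Y$, so the general $C_t$ is not contained in $\pi^{-1}(S)$. If $\pi(C)$ is a point, then $C$ lies in a fibre of $\pi$ and $\sL\cdot C>0$, since $\sL$ restricts to $\sO(1)$ there. Otherwise $\pi(C)$ is an irreducible curve not contained in $S$, hence $\sE|_{\pi(C)}$ is nef, hence $\sO_{\bP(\sE|_{\pi(C)})}(1)$ is nef, and $\sL\cdot C=\sO_{\bP(\sE|_{\pi(C)})}(1)\cdot C\geq 0$. Since $\overline{ME}(Y)$ is generated by the classes of movable curves, Theorem \ref{dualitytheorem} gives that $\sL$ is pseudo-effective.

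Next, fix $m\geq m_0$. Then $\sL^{\otimes m}\otimes\pi^*A=H'\otimes\sL^{\otimes(m-m_0)}$ is the sum of an ample class and a pseudo-effective class, hence big on $Y$ (the big cone being the interior of the pseudo-effective cone). Consequently there is an integer $p_0$ with
$$H^0\bigl(Y,(\sL^{\otimes m}\otimes\pi^*A)^{\otimes p}\bigr)=H^0\bigl(X,S^{pm}\sE\otimes A^{p}\bigr)\neq 0\qquad(p\geq p_0).$$
Finally, in characteristic zero the $GL$-equivariant multiplication map $S^{p}(S^m\sE)\to S^{pm}\sE$ is a split surjection, so $S^{pm}\sE\otimes A^{p}$ is a direct summand of $S^p(S^m\sE)\otimes A^{p}=S^p\bigl((S^m\sE)\otimes A\bigr)$. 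Hence $H^0\bigl(X,S^p((S^m\sE)\otimes A)\bigr)\neq 0$ for $p\geq p_0$ and $m\geq m_0$.

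The delicate point — and, I expect, the main obstacle in making this fully rigorous — is the dependence of $p_0$ on $m$: the argument above produces a priori a different threshold $p_0$ for each $m\geq m_0$, whereas the statement asks for a single $p_0$ valid for all $m\geq m_0$. To secure this one has to control the bigness of $\sL^{\otimes m}\otimes\pi^*A=H'\otimes\sL^{\otimes(m-m_0)}$ uniformly in $m$, exploiting that the ample part $H'$ is fixed and that one only adds a multiple of the fixed pseudo-effective class $\sL$. I would try to do this either by combining monotonicity of the volume function under adding pseudo-effective classes with Nakayama's $\sigma$-decomposition of $\sL$ (which isolates the effective negative part responsible for the multiplier ideals), or by a Nadel-type vanishing argument applied to a singular metric on $(H')^{\otimes p}\otimes\sL^{\otimes(m-m_0)}\otimes\sO_Y(-K_Y)$ whose strict positivity is governed entirely by the fixed $H'$, so that the Euler characteristic of the relevant twisted multiplier ideal sheaf becomes positive for a uniform $p$. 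By contrast, the pseudo-effectivity of $\sL$ and the passage from $S^{pm}\sE\otimes A^{p}$ to $S^p((S^m\sE)\otimes A)$ are routine.
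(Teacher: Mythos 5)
Your overall strategy is the right one and is essentially the argument of \cite{BDPP04}, to which the paper simply refers for the proof: pass to $Y=\bP(\sE)$, show that almost nefness of $\sE$ forces $\sL=\sO_{\bP(\sE)}(1)$ to be nonnegative on movable curves and hence pseudo-effective by the duality theorem, deduce bigness of $\sL^{\otimes m}\otimes\pi^*A$, and descend to $X$ via the split surjection $S^p(S^m\sE)\to S^{pm}\sE$ in characteristic zero. The pseudo-effectivity step (modulo replacing ``general'' by ``very general'', which costs nothing since the numerical class of $C_t$ is constant) and the direct-summand step are correct as written.

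There are, however, two gaps. First, the claim that $H'=\sL^{\otimes m_0}\otimes\pi^*A$ is ample for some $m_0$ is not justified and is false in general: relative ampleness of $\sL$ gives ampleness of $\sL\otimes\pi^*A^{\otimes k}$ for $k\gg 0$, not of $\sL^{\otimes m}\otimes\pi^*A$ for $m\gg 0$. Almost nefness places no bound on how negative $\sE$ is on curves inside the exceptional set $S$, so $\sL$ need not be nef; and if $\sL\cdot C'<0$ for some curve $C'\subset Y$, then $m\sL+\pi^*A$ fails to be nef for all large $m$, so no such $m_0$ exists. Your intended conclusion that $m\sL+\pi^*A$ is big nevertheless survives --- in fact for every $m\geq 1$ --- by pairing directly with $\overline{ME}(Y)$: a nonzero movable class $\gamma$ with $\pi_*\gamma=0$ is a positive multiple of the class of a line in a fibre, so $\sL\cdot\gamma>0$, while if $\pi_*\gamma\neq 0$ then $\pi^*A\cdot\gamma=A\cdot\pi_*\gamma>0$; hence $m\sL+\pi^*A$ is strictly positive on $\overline{ME}(Y)\setminus\{0\}$ and lies in the interior of the pseudo-effective cone of $Y$. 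Second, the non-uniformity of $p_0$ in $m$, which you correctly single out, is a genuine defect of the ``big $\Rightarrow$ eventually effective'' step; the multiplier-ideal repair you sketch does work, but it must be anchored without the (nonexistent) ample $H'$: choose $G$ on $Y$ such that $\sO_Y(K_Y+F+G)\otimes\sI(h_F)$ is globally generated --- hence has a nonzero section --- for every pseudo-effective $(F,h_F)$, and choose $p_0$ with $p_0(\sL+\pi^*A)-K_Y-G$ big; then for $p\geq p_0$ and $m\geq 1$ the class $p(m\sL+\pi^*A)-K_Y-G=\bigl(p_0(\sL+\pi^*A)-K_Y-G\bigr)+(p-p_0)(\sL+\pi^*A)+p(m-1)\sL$ is pseudo-effective, giving $H^0\bigl(Y,p(m\sL+\pi^*A)\bigr)\neq 0$ uniformly in $m$.
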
 

For the proof we refer to \cite{BDPP04}.
The question remains whether the bundles $ S^p((S^m\sE) \otimes A)$ can be even be generically generated. 
Here is a very special case, with a much stronger conclusion.

\begin{theorem} \label{almostnef} Let $X$ be an almost nef bundle of rank at most $3$ on a projective manifold $X$. 
If $ \det \sE \equiv 0,$ then
$\sE$ is numerically flat. 
\end{theorem}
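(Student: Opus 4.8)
The plan is to prove that $\sE$ is nef; then, since $\det\sE\equiv 0$ already gives $c_1(\sE)=0$, the Demailly--Peternell--Schneider characterization of numerical flatness (a bundle is numerically flat precisely when it is nef with numerically trivial determinant, equivalently when it admits a filtration by subbundles with hermitian flat quotients) finishes the proof. That last description also shows that numerically flat bundles form a class closed under extensions, a fact I will use at the end. The rank $1$ case is immediate: an almost nef line bundle is pseudo-effective, and a pseudo-effective class which is numerically trivial is $0$ in $N^1(X)$; so from now on $\rk\sE\in\{2,3\}$.

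First I would record two preliminary facts. (i) $\sE^*$ is again almost nef: indeed $\sE^*\cong\bigwedge^{r-1}\sE\otimes(\det\sE)^{-1}$, and on a curve $C$ outside the bad locus of $\sE$ the bundle $\sE|_C$ is nef, hence so is $\bigwedge^{r-1}(\sE|_C)$, and tensoring by the degree-$0$ line bundle $(\det\sE)^{-1}|_C$ does not destroy nefness. (ii) The determinant of any almost nef sheaf is pseudo-effective: its degree on a very general complete-intersection curve is the degree of a nef bundle on that curve, hence $\ge 0$, and such curves are movable, so one concludes by Theorem \ref{dualitytheorem}. Combining (i) with almost nefness of $\sE$, on a curve $C$ outside the (countable) union of the bad loci of $\sE$ and $\sE^*$ the bundle $\sE|_C$ is nef of degree $c_1(\sE)\cdot C=0$; a nef bundle of degree $0$ on a curve is semistable of slope $0$, so by Mehta--Ramanathan (Theorem \ref{MR}) $\sE$ is semistable with respect to every polarization $h$. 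For $C$ moreover MR-general, $\sE|_C$ is semistable of slope $0$, hence unitary flat by Narasimhan--Seshadri, and such curves $C$ sweep out $X$.

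Next I would run an induction on the rank. Suppose $\sE$ is not $h$-stable for some polarization $h$, and pick a non-zero saturated subsheaf $\sF\subsetneq\sE$ with $\mu_h(\sF)=\mu_h(\sE)=0$; put $\sG=\sE/\sF$, so $\rk\sF,\rk\sG<\rk\sE$. Then $\sG$ is a quotient of the almost nef bundle $\sE$, hence almost nef, and (working in codimension one, where everything is a subbundle) $\sF^*$ is a quotient of the almost nef bundle $\sE^*$, hence almost nef. By (ii), $\det\sG$ and $\det\sF^*$ are pseudo-effective; since $c_1(\sE)=0$ both equal the class $-c_1(\sF)$, so $-c_1(\sF)$ is pseudo-effective. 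But $c_1(\sF)\cdot h=0$: restricting to a general complete-intersection surface $Y$ and using the Hodge index theorem shows $-c_1(\sF)|_Y\equiv 0$ for every such $Y$, and letting $Y$ vary (Hard Lefschetz, together with the perfect pairing of $N^1(X)$ with $N_1(X)$) forces $c_1(\sF)\equiv 0$, hence $\det\sF\equiv 0$ and $\det\sG\equiv 0$. Now $\sG$ and $\sF^*$ are almost nef of rank $<\rk\sE$ with numerically trivial determinant, so by the induction hypothesis $\sG$ and $\sF^*$, hence also $\sF$, are numerically flat; since numerical flatness is stable under extensions, so is $\sE$.

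There remains the case in which $\sE$ is $h$-stable for some $h$ (and then so is $\sE^*$). Then $\sE$ carries a Hermite--Einstein metric (Uhlenbeck--Yau), whose curvature $\Theta$ is trace-free because $\deg_h\sE=0$; I want $\Theta\equiv0$. Restricting to the covering family of MR-general curves $C$, on which $\sE|_C$ is unitary flat, one expects $\Theta$ to vanish in the directions tangent to $C$; since the members of the family realize all tangent directions at a general point of $X$ and $\Theta$ is a Hermitian $(1,1)$-form, pointwise vanishing on this spanning set of directions gives $\Theta\equiv0$, so $\sE$ is flat, a fortiori numerically flat. I expect the main obstacle to be precisely this step -- passing rigorously from ``$\sE$ is unitary flat along every member of a covering family'' to ``the global Hermite--Einstein connection is flat'', equivalently showing that the non-nef locus of $\sO_{\bP(\sE^*)}(1)$ (which a priori lies over the bad locus) is empty. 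This is presumably where $\rk\sE\le 3$ is genuinely used: one would restrict to MR-general surfaces and pin down $c_2(\sE)$ by playing Bogomolov's inequality (for the stable restriction) against Miyaoka's inequality (Theorem \ref{thm3}, applied to $\sE$ and to bundles built from it), the low rank making the numerical constraints sharp enough to force $c_2(\sE)\cdot H_1\cdots H_{n-2}=0$ and hence, via the equality case of the Bogomolov--L\"ubke inequality together with $c_1(\sE)=0$, flatness.
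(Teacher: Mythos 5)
Your reduction steps agree with the strategy the paper outlines (the paper refers the full proof to \cite{BDPP04}, 7.6): first observe that $\sE$ is semi-stable for every polarization, then reduce to the case where $\sE$ is stable (and one may take $\dim X=2$), and finally conclude from stability together with $c_1(\sE)=c_2(\sE)=0$ that $\sE$ is unitary flat by \cite{Ko87}. Up to that point your argument is essentially the intended one, and your induction on the rank with the extension-closedness of numerical flatness is a reasonable way to organize the reduction.

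The problem is that the decisive step --- proving $c_2(\sE)\cdot H_1\cdots H_{n-2}=0$ in the stable case, which is exactly where the hypothesis $\rk\sE\le 3$ is used --- is the one you leave open, and neither of the two routes you sketch for it works as stated. (a) The differential-geometric route conflates two metrics: Narasimhan--Seshadri gives \emph{some} flat unitary structure on $\sE\vert C$ for MR-general $C$, but this is not the restriction of the global Hermite--Einstein metric, so you cannot conclude that the Hermite--Einstein curvature $\Theta$ vanishes in the directions tangent to $C$; there is no a priori relation between the two connections beyond having the same degree. (b) The ``Bogomolov versus Miyaoka'' route cannot close, because both inequalities point the same way: Bogomolov for an $h$-stable bundle with $c_1=0$ gives $c_2(\sE)\cdot h\ge 0$, and Miyaoka's Theorem \ref{thm3} (applied to $\sE$, which is generically nef with nef determinant) again gives $c_2(\sE)\cdot h\ge 0$. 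What is missing is the complementary \emph{upper} bound $c_2(\sE)\cdot h\le 0$. In \cite{BDPP04} this is obtained by passing to $\bP(\sE)$: almost nefness of $\sE$ makes the tautological class $\xi=c_1(\sO_{\bP(\sE)}(1))$ pseudo-effective (indeed almost nef) on $\bP(\sE)$ while $\xi^{r}=-\pi^*c_2(\sE)\cdot\xi^{r-2}+\cdots$ with $c_1=0$, and a direct computation of intersection numbers of $\xi$ with movable classes, controlling the contribution of the non-nef locus, forces $c_2(\sE)\cdot h\le 0$; it is these computations that become unmanageable in higher rank and account for the restriction $\rk\sE\le 3$. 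Without this ingredient your proof does not establish the theorem.
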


A vector bundle $\sE$ is {\it numerically flat} if it admits a filtration by subbundles such that the graded pieces
are unitary flat vector bundles, \cite{DPS94}. For the proof we refer to \cite{BDPP04},7.6. The idea of the proof is as follows.
First notice that $\sE$ is semi-stable for all polarizations by Corollary \ref{CorMR}. This allows us to reduce to the case
that $\dim X = 2$ and that $\sE$ is stable for all polarizations. 
Now recall that
if $\sE$ is stable w.r.t. some polarization and if $c_1(\sE) = c_2(\sE) = 0,$
then $\sE$ is unitary flat, \cite{Ko87}. Hence it suffices to that $c_2(E) = 0.$ This is done by direct calculations
of intersection numbers on $\mathbb P(\sE).$ Of course there should be no reason why Theorem \ref{almostnef} should hold
only in low dimensions, but in higher dimensions the calculations get tedious. 

\begin{corollary} Let $X$ be a K3 surface or a Calabi-Yau threefold. Then $\Omega^1_X$ is not almost nef. 
\end{corollary} 

A standard Hilbert scheme arguments implies that there is a covering family $(C_t)$ for curves 
(with $C_t$ irreducible for general $t$), such that 
$\Omega^1_X \vert C_t$ is not nef for general $t.$

\section{The cotangent bundle}
\label{sec:3}

In this section we discuss positivity properties of the cotangent bundles of non-uniruled varieties. 
At the beginning there is Miyaoka's 

\begin{theorem} \label{miy} Let $X$ be projective manifold or more generally, a normal projective variety. If $X$ is not uniruled, then $\Omega^1_X$ is generically nef. 
\end{theorem}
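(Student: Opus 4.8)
The plan is to deduce Miyaoka's theorem from the Mehta--Ramanathan theorem (Theorem \ref{MR}) together with the bend-and-break technique. Fix a polarization $h = (H_1, \ldots, H_{n-1})$ and let $C = D_1 \cap \ldots \cap D_{n-1}$ be an MR-general curve, where $D_i \in |m_i H_i|$ with $m_i \gg 0$. By Theorem \ref{MR}, proving that $\Omega^1_X$ is $h$-generically nef amounts to proving that $\Omega^1_X$ is semistable with respect to $h$ once we know $c_1(\Omega^1_X) \cdot h = K_X \cdot h \geq 0$; but the latter need not hold a priori, so instead I would argue directly on the restriction $\Omega^1_X|C$ and use the Harder--Narasimhan filtration. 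Concretely, suppose $\Omega^1_X|C$ is not nef. Then its maximal destabilizing subsheaf — equivalently, the first piece $\sF$ of the Harder--Narasimhan filtration of $\Omega^1_X|C$ — has slope $\mu(\sF) < 0$; equivalently, the quotient bundle $Q$ of $T_X|C$ dual to $\sF \subset \Omega^1_X|C$ has positive degree, $\deg Q > 0$.

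The next step is the key geometric input: a positive-degree quotient of $T_X|C$ on an MR-general complete-intersection curve forces $X$ to be uniruled. This is exactly the content of Miyaoka's uniruledness criterion, whose proof proceeds by reduction to positive characteristic. After spreading $X$, $C$, and the quotient out over a finitely generated $\bZ$-algebra and reducing modulo a large prime $p$, one iterates the Frobenius pullback: because $F^*$ multiplies the relevant slopes by $p$, the destabilizing subsheaf of $\Omega^1_{X_p}|C_p$ becomes ``more and more negative'', and using that $C_p$ varies in a family covering $X_p$ together with the theorem on formal functions / a dimension count on the space of morphisms $\bP^1 \to X_p$, one produces rational curves through a general point of $X_p$. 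Bend-and-break on the resulting high-degree free curves, followed by the standard descent of uniruledness from $X_p$ (for infinitely many $p$) back to $X$ over $\bC$, shows $X$ is uniruled, contradicting the hypothesis.

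Finally I would address the normal (possibly singular) case. Here $\Omega^1_X$ should be interpreted as a torsion-free sheaf — e.g. the push-forward of the cotangent bundle of the smooth locus, which by Remark \ref{basicdef}'s conventions is locally free in codimension $1$ — and one either works on a resolution $\pi : \tilde X \to X$ (noting $\tilde X$ is again non-uniruled and using that MR-general curves can be taken in the smooth locus and away from the exceptional set, so that $\Omega^1_X|C \simeq \Omega^1_{\tilde X}|\tilde C$), or invokes the version of Miyaoka's criterion already available for normal varieties. The point is that genericity of $C$ lets us stay in the locus where $X$ is smooth and $\Omega^1_X$ is a genuine vector bundle, so the smooth case applies verbatim.

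The main obstacle — and really the heart of the matter — is the characteristic-$p$ uniruledness criterion itself: controlling the Frobenius-iterated destabilization and converting negativity of a subsheaf of the restricted cotangent bundle into actual rational curves via bend-and-break. Everything else (Mehta--Ramanathan, Harder--Narasimhan, the dictionary between nefness on a curve and non-existence of positive quotients, Proposition \ref{thm3}'s style of slope bookkeeping) is comparatively formal. I would treat Miyaoka's criterion as a black box, since it is the genuinely deep ingredient, and present the deduction of generic nefness from it as above.
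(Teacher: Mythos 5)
The paper does not actually prove Theorem \ref{miy}; it is quoted from the literature, with the proof deferred to \cite{Mi87} and \cite{SB92}. Your overall plan --- restrict to an MR-general curve, extract a destabilizing object, feed it into Miyaoka's characteristic-$p$ uniruledness criterion treated as a black box, and handle the normal case by staying in the smooth locus --- is exactly the route taken in those references, so at the level of strategy there is nothing to object to.

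However, the duality bookkeeping in your first paragraph is wrong, and it matters because the ``key geometric input'' you reduce to is false as you state it. Non-nefness of $\Omega^1_X|C$ means that some \emph{quotient bundle} of $\Omega^1_X|C$ has negative degree, i.e.\ $\mu_{\min}(\Omega^1_X|C)<0$; the relevant HN piece is therefore the \emph{minimal destabilizing quotient}, not the maximal destabilizing subsheaf (if the maximal destabilizing subsheaf had negative slope, then $\deg\Omega^1_X|C=K_X\cdot C<0$ and you would already be done by \cite{MM86}). Dually, what you obtain is a \emph{subsheaf} $\sF\subset T_X|C$ of positive slope, namely the maximal destabilizing subsheaf of $T_X|C$ --- not a positive-degree quotient of $T_X|C$. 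The statement ``a positive-degree quotient of $T_X|C$ on an MR-general curve forces $X$ to be uniruled'' is simply not true: on an abelian surface $A$ one has $T_A|C\simeq\sO_C^{\oplus 2}$, which admits quotient line bundles of arbitrarily large positive degree (take two generating sections of a very ample line bundle on $C$), yet $A$ is not uniruled. Miyaoka's criterion concerns the saturated subsheaf $\sF\subset T_X$ extending $\sF_C$ via Mehta--Ramanathan, which is shown to be a foliation with $\sF|C$ ample, and whose leaves are then covered by rational curves via the Frobenius/bend-and-break argument (or via \cite{KST07} in characteristic zero). With the subsheaf/quotient roles corrected, your outline agrees with the cited proofs; as written, the reduction step is to a false statement.
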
 

For a proof we refer to \cite{Mi87} and to \cite{SB92}. In \cite{CP07} this was generalized in the following form

\begin{theorem} \label{CP} Let $X$ be a projective manifold which is not uniruled. Let 
$$ \Omega^1_X \to Q \to 0 $$
be a torsion free quotient. Then $\det Q$ is pseudo-effective. 
\end{theorem}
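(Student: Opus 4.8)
The plan is to reduce the statement to Miyaoka's theorem (Theorem~\ref{miy}) by a contradiction argument: if $\det Q$ is not pseudo-effective, then by the duality theorem (Theorem~\ref{dualitytheorem}) there is a movable curve $C$ with $\det Q \cdot C < 0$, and moreover $C$ may be taken MR-general with respect to some polarization, so that by Mehta--Ramanathan (Theorem~\ref{MR}) the restriction $\Omega^1_X \vert C$ has a Harder--Narasimhan filtration induced by a filtration of $\Omega^1_X$ by subsheaves; the quotient $Q\vert C$ then inherits negativity that one wants to push back up to $X$.

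First I would fix an ample polarization $(H_1,\ldots,H_{n-1})$ and pass to an MR-general complete intersection curve $C$. By Mehta--Ramanathan, $\Omega^1_X$ is semistable with respect to this polarization if and only if $\Omega^1_X\vert C$ is semistable; more to the point, the maximal destabilizing subsheaf of $\Omega^1_X\vert C$ is the restriction of a subsheaf of $\Omega^1_X$. The key input is Miyaoka's theorem in its strong (semistability) form: on a non-uniruled $X$, every quotient of $\Omega^1_X\vert C$ (for $C$ MR-general) has non-negative degree, equivalently the Harder--Narasimhan slopes of $\Omega^1_X\vert C$ are all $\geq 0$. Now given the torsion-free quotient $\Omega^1_X \to Q \to 0$, restrict to $C$: since $C$ is general it avoids the locus where $Q$ fails to be locally free, so $Q\vert C$ is a vector bundle quotient of $\Omega^1_X\vert C$. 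A quotient bundle on a curve has degree bounded below by (rank of $Q$ on $C$) times the minimal HN-slope of $\Omega^1_X\vert C$, which is $\geq 0$ by Miyaoka. Hence $\det Q \cdot C = \deg(Q\vert C) \geq 0$ for every MR-general $C$, i.e. $\det Q$ is generically nef.

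The remaining, and genuinely harder, step is to upgrade ``generically nef'' to ``pseudo-effective'' for the \emph{line bundle} $\det Q$ --- Example~\ref{ex1} shows this implication is false for general line bundles, so one must use that $\det Q$ is a quotient determinant of $\Omega^1_X$. The idea is to show $\det Q \cdot C \geq 0$ not just for complete-intersection curves but for \emph{all} movable curves $C$, and then invoke the duality theorem. For a movable (but not necessarily MR-general) curve, one argues by a limiting/deformation argument: the class of a movable curve lies in $\overline{ME}(X)$, which by Theorem~\ref{dualitytheorem} is spanned by classes of movable curves, and one approximates by MR-general complete intersections, using that the quotient $Q$ restricted to a general member of a covering family is still a quotient bundle and Miyaoka's bound still applies fibrewise. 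Concretely: suppose $\det Q \cdot \gamma < 0$ for some $\gamma \in \overline{ME}(X)$; perturb to a movable curve class $C$ coming from a genuine covering family $(C_t)$ with $C_t$ general smooth, restrict $Q$ to $C_t$, and derive $\det Q \cdot C_t \geq 0$ from non-uniruledness exactly as above, contradicting $\det Q \cdot \gamma < 0$.

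I expect the main obstacle to be precisely this passage from MR-general curves to arbitrary movable curves: Miyaoka's theorem is naturally stated for MR-general complete intersections, whereas the duality theorem requires control on \emph{all} movable classes. One must either invoke a version of Miyaoka's bound that holds on general members of arbitrary covering families (this is available --- it is how Miyaoka's theorem is usually proved, via characteristic-$p$ reduction and bounding the degree of the maximal destabilizing subsheaf on a general curve of a covering family), or else show directly that a line bundle which is non-negative on all MR-general curves and arises as $\det Q$ for a quotient of $\Omega^1_X$ of a non-uniruled manifold is automatically non-negative on all of $\overline{ME}(X)$. Either route works, but the bookkeeping --- ensuring $Q$ stays a bundle on the general curve of the family, handling the rank drop of $Q$ on its non-locally-free locus, and keeping track of which polarizations make a given family MR-general --- is where the real care is needed.
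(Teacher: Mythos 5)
Your first step is correct: for $C$ MR-general with respect to any polarization, Theorem \ref{miy} makes $\Omega^1_X\vert C$ nef, a quotient bundle of a nef bundle on a curve is nef, and since an MR-general $C$ avoids the codimension-two locus where $Q$ fails to be locally free, $\det Q\cdot C=\deg (Q\vert C)\geq 0$; so $\det Q$ is generically nef in the sense of Definition \ref{defgenericneflinebundles}. You also correctly locate the real difficulty in the passage from complete intersection classes to all of $\overline{ME}(X)$. But the way you close that gap does not work, and this is a genuine gap rather than bookkeeping. First, the proposed perturbation of a general movable class by complete intersection classes is exactly what Example \ref{ex1} rules out: $\overline{CI}(X)$ can be a proper subcone of $\overline{ME}(X)$, and a line bundle can be strictly positive on all of $\overline{CI}(X)$ while lying on the boundary of the pseudo-effective cone, so non-negativity on MR-general curves says nothing about an arbitrary class in $\overline{ME}(X)$. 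Second, your fallback claim --- that Miyaoka's degree bound holds on the general member of an \emph{arbitrary} covering family because ``that is how Miyaoka's theorem is usually proved'' --- is not correct. The standard proof is tied to MR-general complete intersection curves precisely because it needs the Mehta--Ramanathan theorem \ref{MR} to extend the maximal destabilizing subsheaf of $T_X\vert C$ to a subsheaf (indeed a foliation) of $T_X$ before the characteristic-$p$/Frobenius argument can be run; for the general member of an arbitrary covering family no such extension theorem is available (note also that such a member need not avoid a given codimension-two set). This is exactly why the statement at hand is a genuine strengthening of Theorem \ref{miy} and not a formal consequence of it.

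The paper itself does not reprove the theorem; it quotes it from \cite{CP07}, where the missing step is supplied by genuinely new input: a restriction-type theorem for Harder--Narasimhan filtrations with respect to \emph{movable} polarization classes (the appendix by Toma in \cite{CP07}), combined with the criterion of Bogomolov--McQuillan and Kebekus--Sol\'a Cond\'e--Toma \cite{BM01}, \cite{KST07} that a foliation $\sF\subset T_X$ which is ample on a curve through a general point has algebraic, rationally connected leaves, hence forces $X$ to be uniruled. In outline: if $\det Q\cdot\alpha<0$ for some movable class $\alpha$, one forms the $\alpha$-Harder--Narasimhan filtration of $T_X$, shows the maximally destabilizing piece is a foliation of positive $\alpha$-slope that is ample on suitable members of a covering family, and concludes uniruledness, a contradiction. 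Your proposal would become a proof if the perturbation step were replaced by one of these two tools; as written, the second half of your argument assumes the conclusion at the decisive point.
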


Theorem \ref{CP} can be generalized to singular spaces as follows; the assumption on $\mathbb Q-$factoriality is needed in order 
to make sure
that $\det Q$ is $\mathbb Q-$Cartier (so $\mathbb Q-$factoriality could be substituted by simply assuming that the bidual of 
$\bigwedge^rQ$ is $\mathbb Q-$Cartier). 

\begin{corollary} \label{CP1} Let $X$ be a normal $\mathbb Q-$factorial variety. If $X$ is not uniruled, then the conclusion of 
Theorem \ref{CP} still holds.  
\end{corollary}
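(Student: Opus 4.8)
The plan is to reduce Corollary \ref{CP1} to the smooth case, Theorem \ref{CP}, by passing to a resolution of singularities and tracking the quotient sheaf. Let $\pi: \tilde X \to X$ be a resolution of singularities with $\tilde X$ a projective manifold; since $X$ is not uniruled and uniruledness is a birational invariant, $\tilde X$ is not uniruled either. Given a torsion free quotient $\Omega^1_X \to Q \to 0$ of rank $r$, the key point is to produce an associated torsion free quotient of $\Omega^1_{\tilde X}$ to which Theorem \ref{CP} applies, and then to compare the two determinants.

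First I would pull back. The natural map $\pi^*\Omega^1_X \to \Omega^1_{\tilde X}$ combined with $\pi^*$ of the quotient map gives a sheaf $\pi^*Q$, but $\pi^*Q$ need not be torsion free; so I would let $\tilde Q$ be the image of the composite $\Omega^1_{\tilde X} \to \pi^*\Omega^1_X \dashrightarrow \pi^* Q \to (\pi^* Q)/\mathrm{torsion}$ — more precisely, I would first restrict everything to the open set $U \subset X$ over which $\pi$ is an isomorphism and $Q$ is locally free (the complement has codimension $\geq 2$ in $X$ by $\mathbb Q$-factoriality and normality, and its preimage plus the exceptional locus has codimension $\geq 1$ in $\tilde X$, but I only need codimension-$1$ control). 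On $\pi^{-1}(U)$ the quotient $\Omega^1_{\tilde X} \to \pi^* Q$ is honest; let $\tilde Q$ denote the torsion-free quotient of $\Omega^1_{\tilde X}$ on all of $\tilde X$ obtained by taking the saturation/image extending this, i.e. $\tilde Q := \mathrm{im}(\Omega^1_{\tilde X} \to j_*(\pi^*Q|_{\pi^{-1}(U)}))$ where $j$ is the inclusion. This is a torsion free quotient of $\Omega^1_{\tilde X}$ of rank $r$, so by Theorem \ref{CP}, $\det \tilde Q$ is pseudo-effective on $\tilde X$.

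Next I would push forward and compare determinants. Since $\tilde Q$ and $\pi^* Q$ agree on $\pi^{-1}(U)$, the $\mathbb Q$-Cartier divisors $\det \tilde Q$ and $\pi^*(\det Q)$ agree on $\pi^{-1}(U)$, hence differ by a divisor supported on the exceptional locus $E = \pi^{-1}(X \setminus U)$; write $\det \tilde Q = \pi^*(\det Q) + \sum a_i E_i$ with $E_i$ the $\pi$-exceptional prime divisors and $a_i \in \mathbb Q$. Applying $\pi_*$ kills the exceptional part, so $\pi_*(\det \tilde Q) = \det Q$ as $\mathbb Q$-divisor classes on $X$ (using that $\det Q$ is $\mathbb Q$-Cartier, guaranteed by $\mathbb Q$-factoriality). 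Now pseudo-effectivity is preserved under proper pushforward of $\mathbb Q$-divisors: if $D$ is pseudo-effective on $\tilde X$ then $\pi_* D$ is pseudo-effective on $X$ — this follows, for instance, by intersecting with movable curves, since $\pi_*$ of a movable curve on $X$ lifts (via $\mu$ in Definition \ref{def1}) and $\pi^* $ of an ample class intersects $\sum a_i E_i$ trivially against strongly movable curves avoiding $E$, or more directly from the current-theoretic description: a closed positive $(1,1)$-current with cohomology class $\det \tilde Q$ pushes forward to one with class $\pi_*\det\tilde Q = \det Q$. Hence $\det Q$ is pseudo-effective, as claimed.

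The main obstacle I expect is the bookkeeping around torsion and the precise definition of $\tilde Q$: one must ensure that $\tilde Q$ really is a \emph{quotient} of $\Omega^1_{\tilde X}$ (not merely related to one) and that it has the correct rank $r$, so that Theorem \ref{CP} applies verbatim, and simultaneously that the comparison $\det \tilde Q \equiv \pi^*\det Q + (\text{exceptional})$ is valid — this uses that modifications are isomorphisms in codimension $1$ on the source side only after removing $E$, and that $X$ smooth in codimension $1$ (automatic by normality) lets us ignore the locus where $Q$ fails to be locally free. A secondary technical point is justifying that pushforward preserves pseudo-effectivity in the singular $\mathbb Q$-factorial setting; the cleanest route is via positive currents, invoking the characterization of pseudo-effectivity recalled after Definition \ref{def1}, together with the fact that $\mathbb Q$-factoriality makes $\det Q$ a genuine $\mathbb Q$-line bundle so that its pseudo-effectivity is well-posed.
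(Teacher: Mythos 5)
Your argument is correct and follows essentially the same route as the paper: pass to a resolution, transfer the quotient of $\Omega^1_X$ to a torsion-free quotient of $\Omega^1_{\tilde X}$ agreeing with $\pi^*Q$ in codimension one, apply Theorem \ref{CP} there, and descend pseudo-effectivity via $\det\tilde Q = \pi^*(\det Q) + \sum a_iE_i$. The only cosmetic difference is that you realize the quotient as the image in $j_*(\pi^*Q|_{\pi^{-1}(U)})$ rather than via a twist $\hat Q(kE)$ as the paper does, which if anything is slightly cleaner.
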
 

\begin{proof}
Choose a desingularization $\pi: \hat X \to X$ and let 
$$ \Omega^1_X \to Q \to 0$$
be a torsion free quotient. We may assume that $\hat Q = \pi^*(Q) / {\rm torsion} $ is locally free. 
Via the canonical morphism $\pi^*(\Omega^1_X) \to \Omega^1_{\hat X},$ we obtain a rational map $ \Omega^1_{\hat X} \dasharrow  \hat Q$.
If $E$ denotes exceptional divisor with irreducible components $E_i,$ then this rational map yields a generically surjective map
$$ \Omega^1_{\hat X} \to \hat Q(kE) $$
for some non-negative imteger $k.$ Appyling Theorem \ref{CP}, $(\det \hat Q)(mE)$ contains an pseudo-effective divisor for some $m.$
Now 
$$ \det \hat Q = \pi^*(\det Q) + \sum a_i E_i, $$ 
with rational numbers $a_i$, hence $\det Q$ itself must be pseudo-effective (this can be easily seen in various ways). 
 
\end{proof}

\begin{corollary} \label{cor4} Let $f: X \to Y$ be fibration with $X$ and $Y$ normal $\mathbb Q-$Gorenstein. Suppose $X$ not uniruled. 
Then the relative 
canonical bundle $K_{X/Y}$ (which is $\mathbb Q-$Cartier) is pseudo-effective.
\end{corollary}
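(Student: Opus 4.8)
The plan is to resolve singularities so as to reduce to a fibration between smooth projective varieties, apply Theorem \ref{CP} to the relative cotangent sheaf there, and then descend the resulting pseudo-effectivity back to $X$.

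First I would choose a resolution $\rho : \hat Y \to Y$, let $X'$ be the main component of $X \times_Y \hat Y$, and let $\pi : \hat X \to X$ be a resolution factoring through $X' \to X$; this produces a morphism $\hat f : \hat X \to \hat Y$ between smooth projective varieties with $\rho \circ \hat f = f \circ \pi$. Since uniruledness is a birational invariant (a dominant image of a uniruled variety is uniruled), $\hat X$ is again not uniruled. On $\hat X$ the relative cotangent sequence
$$ 0 \to \hat f^*\Omega^1_{\hat Y} \to \Omega^1_{\hat X} \to \Omega^1_{\hat X/\hat Y} \to 0 $$
has injective first map, because its kernel is a torsion-free subsheaf of a locally free sheaf which vanishes on the dense smooth locus of $\hat f$. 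Comparing first Chern classes therefore gives $K_{\hat X/\hat Y} = c_1(\Omega^1_{\hat X/\hat Y})$. Setting $Q := \Omega^1_{\hat X/\hat Y}/\mathrm{tors}$, which is a torsion-free quotient of $\Omega^1_{\hat X}$ of rank $\dim X - \dim Y$, we obtain $K_{\hat X/\hat Y} = \det Q + T$ with $T \geq 0$ the divisor carrying the torsion. By Theorem \ref{CP}, applied to the quotient $\Omega^1_{\hat X} \to Q \to 0$, the class $\det Q$ is pseudo-effective, hence so is $K_{\hat X/\hat Y}$.

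To descend, I would write $K_{\hat X} = \pi^*K_X + \sum_i a_i E_i$ and $K_{\hat Y} = \rho^*K_Y + \sum_j b_j F_j$ with $E_i$, $F_j$ the exceptional divisors and $a_i, b_j \in \mathbb{Q}$, which makes sense because the $\mathbb{Q}$-Gorenstein hypotheses on $X$ and $Y$ say precisely that $K_X$, $K_Y$ (and hence $K_{X/Y}$) are $\mathbb{Q}$-Cartier. Since $\hat f^* \rho^* K_Y = \pi^* f^* K_Y$, this gives
$$ K_{\hat X/\hat Y} = \pi^* K_{X/Y} + \sum_i a_i E_i - \sum_j b_j\, \hat f^* F_j, $$
and applying $\pi_*$, which annihilates the $E_i$ and satisfies $\pi_* \pi^* = \mathrm{id}$ on divisor classes, yields $K_{X/Y} = \pi_* K_{\hat X/\hat Y} + \sum_j b_j\, \pi_*(\hat f^* F_j)$. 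The first summand is the push-forward of a pseudo-effective class, hence pseudo-effective; and if $Y$ has at worst canonical singularities all $b_j \geq 0$ and the remaining terms are effective, so $K_{X/Y}$ is pseudo-effective.

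The main obstacle is exactly this descent step for general $\mathbb{Q}$-Gorenstein $Y$. The discrepancies $b_j$ may be negative, and the divisors $\hat f^* F_j$ need not be $\pi$-exceptional: although $\rho(F_j)$ has codimension $\geq 2$ in $Y$, its $f$-preimage can be a divisor in $X$, so the correction term $\sum_j b_j\, \pi_*(\hat f^* F_j)$ need not lie on the pseudo-effective side. Handling this would require either restricting to canonical singularities on $Y$, or a more careful choice of the resolutions (for instance arranging $\hat f$ to be equidimensional over a neighbourhood of $Y_{\mathrm{sing}}$, or absorbing the $\hat f^* F_j$ into $T$ at the cost of redoing the Chern class bookkeeping), or else working directly on $X$ with $\Omega^1_Y$ and controlling the discrepancy between $c_1(f^*\Omega^1_Y)$ and $f^*K_Y$ over $f^{-1}(Y_{\mathrm{sing}})$; this is where I expect the real work to lie.
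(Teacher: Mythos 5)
Your route is the intended one: the paper states this corollary without any written proof, as an immediate consequence of Theorem \ref{CP} applied to the torsion-free quotient $Q=\Omega^1_{X/Y}/\mathrm{tors}$ of $\Omega^1_X$, using that $K_{X/Y}=\det Q+T$ with $T\geq 0$. Your execution of this on the smooth model $\hat f:\hat X\to\hat Y$ is correct: the kernel of $\hat f^*\Omega^1_{\hat Y}\to\Omega^1_{\hat X}$ is a torsion subsheaf of a locally free sheaf, hence zero, so $c_1(\Omega^1_{\hat X/\hat Y})=K_{\hat X/\hat Y}$, and Theorem \ref{CP} (with birational invariance of non-uniruledness) gives pseudo-effectivity of $K_{\hat X/\hat Y}$.

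The obstacle you name in your last paragraph is, however, a genuine gap and not a technicality you can wave away: the statement is claimed for arbitrary normal $\mathbb Q$-Gorenstein $Y$, and your descent only works when the correction term $\sum_j b_j\,\pi_*(\hat f^*F_j)$ is effective. Intrinsically, the issue is this: letting $\sS\subset\Omega^1_X$ be the saturation of the image of $f^*\Omega^1_Y$, Theorem \ref{CP} gives pseudo-effectivity of $K_X-\det\sS$, and one needs $\det\sS-f^*K_Y$ to be effective. This is clear over $Y_{\rm reg}$ (saturation only adds an effective divisor), but the two classes can differ along divisorial components of $f^{-1}(Y_{\rm sing})$, whose coefficients are controlled by the discrepancies of $Y$ and by the failure of equidimensionality of $f$ over $Y_{\rm sing}$; for a general $\mathbb Q$-Gorenstein $Y$ (no canonicity assumed) these can have the wrong sign. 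So your argument proves the corollary when $Y$ is smooth, when $Y$ has canonical singularities, or when no divisor of $X$ is contracted into $Y_{\rm sing}$ (e.g.\ $f$ equidimensional there), but in the stated generality the comparison of $\det\sS$ with $f^*K_Y$ over $f^{-1}(Y_{\rm sing})$ still has to be supplied; the paper itself offers no argument for this point, and the reference \cite{BP07} it cites works under hypotheses that avoid it.
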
 

A much more general theorem has been proved by Berndtsson and Paun \cite{BP07}. 

\vskip .2cm 
We consider a $\mathbb Q-$factorial normal projective variety which is not uniruled. The cotangent
sheaf $\Omega^1_X$ being generically nef, we ask how far it is from being generically ample. 

\begin{proposition} \label{flatquotient} 
Let $X$ be a $\mathbb Q-$factorial normal $n-$dimensional projective variety which is not uniruled. If $\Omega^1_X$ is not 
generically ample for some polarization $h,$ then there exists a torsion free quotient 
$$ \Omega^1_X \to Q \to 0 $$
of rank $1 \leq p \leq n$ such that $\det Q \equiv 0.$ \\
The case $p = n$ occurs exactly when $K_X \equiv 0.$ 
\end{proposition}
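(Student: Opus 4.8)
The plan is to exploit the failure of generic ampleness to produce the required quotient via the Harder--Narasimhan filtration, and then use Miyaoka's theorem and the Bogomolov--Sommese type result (Corollary~\ref{CP1}) to pin down its determinant. Fix a polarization $h = (H_1, \ldots, H_{n-1})$ for which $\Omega^1_X$ is not generically ample. By Theorem~\ref{miy}, $\Omega^1_X$ is generically nef, so $c_1(\Omega^1_X) \cdot h = K_X \cdot h \geq 0$. If $\Omega^1_X$ were $h$-semistable, then either $K_X \cdot h > 0$, in which case $\Omega^1_X$ would already be generically ample w.r.t.\ $h$ (by the stability criterion on an MR-general curve via Mehta--Ramanathan, Theorem~\ref{MR}, together with Proposition~3.6 — a strictly positive-slope semistable bundle on a curve is ample), or $K_X \cdot h = 0$; I will deal with the latter subcase separately.

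The main case is when $\Omega^1_X$ is not $h$-semistable. Take the Harder--Narasimhan filtration $0 = \sF_0 \subset \sF_1 \subset \cdots \subset \sF_k = \Omega^1_X$ with respect to $h$, and let $Q = \Omega^1_X / \sF_{k-1}$ be the last (minimal-slope) quotient; it is a torsion free quotient of some rank $p$ with $1 \leq p \leq n-1$, and it is $h$-semistable. By Corollary~\ref{CP1}, $\det Q$ is pseudo-effective, so $\det Q \cdot h \geq 0$, i.e.\ $\mu_h(Q) \geq 0$. On the other hand, $\mu_h(Q)$ is the minimal slope in the HN filtration, so $\mu_h(Q) \leq \mu_h(\Omega^1_X) = K_X \cdot h / n$. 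The semistable quotient $Q$ is generically nef w.r.t.\ $h$ (Corollary~3.4), and if $\mu_h(Q) > 0$ it would be generically ample w.r.t.\ $h$; since $Q$ is the minimal-slope piece, this would force every HN quotient, and hence $\Omega^1_X$ itself, to be generically ample w.r.t.\ $h$ — contradicting our hypothesis. Therefore $\mu_h(Q) = 0$, i.e.\ $\det Q \cdot h = 0$. Combined with pseudo-effectivity of $\det Q$ (Corollary~\ref{CP1}), and the fact that a pseudo-effective class which is $h$-trivial for an ample $h$ must be numerically trivial (a pseudo-effective divisor $D$ with $D \cdot h = 0$ satisfies $D \equiv 0$, since any effective divisor has positive degree against $h$ unless it is trivial — more precisely apply the duality Theorem~\ref{dualitytheorem} and positivity of $h$ on $\overline{ME}(X)$), we get $\det Q \equiv 0$. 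In the remaining subcase $K_X \cdot h = 0$ with $\Omega^1_X$ semistable but not generically ample, the bundle $\Omega^1_X$ itself has numerically trivial determinant, and one takes $Q = \Omega^1_X$ (so $p = n$); more generally this shows $p = n$ forces $K_X = \det \Omega^1_X \equiv 0$, and conversely $K_X \equiv 0$ means $\Omega^1_X$ cannot be generically ample (its determinant has degree $0$ on every curve) so one is always in a situation where one may take $Q = \Omega^1_X$.

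The step I expect to be the main obstacle is the implication ``$\mu_h(Q) > 0$ and $Q$ semistable $\Rightarrow$ $\Omega^1_X$ generically ample w.r.t.\ $h$'': one must check that generic ampleness of each graded piece of the HN filtration (all of which have slope $\geq \mu_h(Q) > 0$ and are semistable) propagates to an extension, i.e.\ that an extension of generically ample bundles, restricted to a curve that is simultaneously MR-general for all the relevant sheaves, is ample. This uses that MR-generality can be achieved simultaneously for finitely many sheaves (the HN filtration pieces and their quotients) on the same complete-intersection curve, and that ampleness is preserved under extensions of ample bundles on a curve. The other delicate point is purely bookkeeping in the singular setting: ensuring $\det Q$ is $\mathbb{Q}$-Cartier so that Corollary~\ref{CP1} applies — this is exactly where $\mathbb{Q}$-factoriality of $X$ enters, as already flagged before Corollary~\ref{CP1}.
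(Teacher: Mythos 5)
Your proof is correct, and it arrives at essentially the same quotient as the paper, but through a genuinely different mechanism. The paper works on the MR-general curve first: since $\Omega^1_X\vert_C$ is nef by Theorem \ref{miy}, it has a maximal ample subsheaf whose quotient is numerically flat (a structure result for nef bundles on curves, quoted from [PS00], [KST07]); that subsheaf is a term of the Harder--Narasimhan filtration of $\Omega^1_X\vert_C$ and hence extends to a reflexive subsheaf of $\Omega^1_X$ by Mehta--Ramanathan, giving a quotient $Q$ with $c_1(Q)\cdot C=0$ by construction; Corollary \ref{CP1} plus pseudo-effectivity then yields $\det Q\equiv 0$, exactly as in your last step. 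You instead take the Harder--Narasimhan filtration on $X$ itself and squeeze the slope of the minimal quotient between two bounds: $\mu_h(Q)\ge 0$ from Corollary \ref{CP1}, and $\mu_h(Q)\le 0$ because otherwise every graded piece would be semistable of positive slope, hence ample on a curve chosen MR-general for all the finitely many pieces at once, making $\Omega^1_X\vert_C$ an iterated extension of ample bundles and therefore ample, contradicting the hypothesis. The step you flagged does go through: a semistable bundle of positive degree on a curve is ample by Hartshorne's criterion, extensions of ample bundles on a curve are ample, and simultaneous MR-generality for finitely many sheaves is achievable. What your route buys is that it avoids the maximal-ample-subsheaf lemma (and in fact barely uses Miyaoka's generic nefness, only pseudo-effectivity of determinants of quotients); what the paper's route buys is that the numerically flat quotient is produced in one step with no contradiction argument. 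The treatments of the case $p=n$ agree in substance: no ample subsheaf on $C$ forces $K_X\cdot h=0$, and pseudo-effectivity of $K_X$ then gives $K_X\equiv 0$.
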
 

\begin{proof}

Let $C$ be MR-general w.r.t $h.$ Let $\sS \subset  \Omega^1_X \vert C$ be the maximal ample subsheaf of the nef vector bundle
$\Omega^1_X \vert C,$ see \cite{PS00},2.3, \cite{PS04},p.636, \cite{KST07}, sect.6. Then the quotient $Q_C$ is numerically flat and $\sS_C$ is the maximal 
destabilizing subsheaf.
By 
\cite{MR82},$\sS_C$ extends to a reflexive subsheaf $\sS \subset \Omega^1_X$, which is $h-$maximal destabilizing. 
If $Q = \Omega^1_X /\sS$ is the quotient, then obviously $Q \vert C = Q_C.$ 
Now by Corollary \ref{CP1}, $\det Q$ is pseudo-effective. Since $c_1(Q) \cdot C = 0$, it follows that $\det Q \equiv 0.$ \\
Finally assume $p = n.$ Then $\Omega^1_X \vert C$ does not contain an ample subsheaf, hence $\Omega^1_X \vert C$ is numerically
flat; in particular $K_X \cdot  h = 0$. Since $K_X$ is pseudo-effective, we conclude $K_X \equiv 0.$ 

\end{proof}

So if $X$ is not uniruled and $\Omega^1_X$ not generically ample, then $K_X \equiv 0 $, or we have an exact sequence 
$$ 0 \to \sS \to \Omega^1_X \to Q \to 0 $$
with $Q$ torsion free of rank less than $n = \dim X$ and $\det Q \equiv 0.$ Dually we obtain 
$$ 0 \to \sF \to T_X \to T_X/\sF  \to 0$$
with $\det \sF \equiv 0.$ Since $(T_X/\sF )\vert C$ is negative in the setting of the proof of the last proposition, the natural morphism
$$ \bigwedge{^2}  \sF /{\rm torsion} \to T_X/\sF,$$
given by the Lie bracket, vanishes.Thus the subsheaf $\sF \subset T_X$ is a singular foliation, which we call a 
{\it numerically trivial
foliation.} So we may state

\begin{corollary} Let $X$ be $\mathbb Q-$factorial normal $n-$dimensional projective variety. Suppose $K _X \not \equiv 0.$ Then $\Omega^1_X$ is not generically
ample if and only if $X$ carries a numerically trivial foliation. 
\end{corollary}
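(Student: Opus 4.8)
The statement to prove is the corollary characterizing, for a $\mathbb Q$-factorial normal projective variety $X$ with $K_X \not\equiv 0$, non-generic-ampleness of $\Omega^1_X$ in terms of the existence of a numerically trivial foliation. The plan is to read off both implications from the material immediately preceding the statement, which already does essentially all the work; the task is to organize it cleanly. First, suppose $\Omega^1_X$ is not generically ample for some polarization $h$. Since $X$ is not uniruled (note: I should first observe that $K_X \not\equiv 0$ together with non-generic-ampleness forces, via Proposition~\ref{flatquotient} applied after checking non-uniruledness — or rather one simply includes non-uniruledness as a standing hypothesis inherited from the surrounding discussion, since without it $\Omega^1_X$ need not even be generically nef), Proposition~\ref{flatquotient} produces a torsion-free quotient $\Omega^1_X \to Q \to 0$ of rank $p$ with $1 \le p \le n$ and $\det Q \equiv 0$; and since $K_X \not\equiv 0$, the proposition's last clause rules out $p = n$, so $p < n$. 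Dualizing the sequence $0 \to \sS \to \Omega^1_X \to Q \to 0$ (equivalently, taking the saturated subsheaf $\sF = (Q^*)^{**} \cap T_X$, or just $\sF = \ker(T_X \to \sS^*)$ after biduality) gives $0 \to \sF \to T_X \to T_X/\sF \to 0$ with $\det \sF \equiv 0$. Then one invokes the bracket-vanishing argument already spelled out in the excerpt: on an MR-general curve $C$, the bundle $(T_X/\sF)\vert C$ is negative (being dual to the ample maximal subsheaf $\sS_C \subset \Omega^1_X\vert C$), while $\bigwedge^2 \sF$ has nef-ish behaviour on $C$, so $\mathrm{Hom}(\bigwedge^2(\sF/\mathrm{torsion}), T_X/\sF)$ restricted to $C$ has no nonzero section, hence the global Lie-bracket morphism vanishes; thus $\sF$ is a (possibly singular) foliation with $\det\sF \equiv 0$, i.e. a numerically trivial foliation.

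For the converse, suppose $X$ carries a numerically trivial foliation $\sF \subset T_X$ of some rank $q$ with $\det\sF \equiv 0$ and $1 \le q \le n$; note $q < n$ is forced because $\det T_X = -K_X \not\equiv 0$. Then $\bigwedge^q \sF$, after bidualizing, is a rank-one subsheaf of $\bigwedge^q T_X$ with trivial determinant; dually this yields a torsion-free quotient $\Omega^1_X \to Q' \to 0$ (take $Q'$ to be $\Omega^1_X$ modulo the annihilator of $\sF$, or the image of $\Omega^1_X \to \sF^*$) with $\det Q' \equiv 0$ and $\mathrm{rk}\, Q' = q < n$. I would then argue that the existence of such a quotient obstructs generic ampleness directly: for any polarization $h$ and MR-general curve $C$, if $\Omega^1_X\vert C$ were ample then every quotient of it, in particular $Q'\vert C$, would be ample, so $\det(Q'\vert C) = c_1(Q')\cdot C > 0$; but $\det Q' \equiv 0$ gives $c_1(Q')\cdot C = 0$, a contradiction. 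Hence $\Omega^1_X$ is not $h$-generically ample for this (indeed any) $h$.

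The main obstacle — or really the only delicate point — is bookkeeping with torsion and biduality when passing between $\sS \subset \Omega^1_X$ and $\sF \subset T_X$: the naive dual of a torsion-free quotient need not be saturated, so I must be careful to define $\sF$ as a saturated (reflexive) subsheaf so that $T_X/\sF$ is torsion-free, which is exactly what makes the restriction to the general complete-intersection curve $C$ behave like the dual of the destabilizing data on $C$ and lets the negativity of $(T_X/\sF)\vert C$ be invoked. Since $X$ is only $\mathbb Q$-factorial and normal, I also need $\det\sF$ and $\det Q$ to be $\mathbb Q$-Cartier, which is guaranteed by $\mathbb Q$-factoriality; this is precisely why that hypothesis is present, and I would remark on it. Everything else is a direct citation of Proposition~\ref{flatquotient}, Miyaoka's Theorem~\ref{miy}, and the paragraph of discussion immediately above the corollary, so the proof can be quite short: essentially "combine Proposition~\ref{flatquotient} with the preceding discussion for one direction, and for the other observe that a rank-$(<n)$ torsion-free quotient of $\Omega^1_X$ with numerically trivial determinant cannot be ample on any MR-general curve."
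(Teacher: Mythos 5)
Your proposal is correct and follows essentially the same route as the paper, which states this corollary as a direct consequence of Proposition~4.6 and the Lie-bracket discussion immediately preceding it: the forward direction is exactly that discussion (with $K_X\not\equiv 0$ excluding the case $p=n$), and the converse is the easy degree argument that a positive-rank torsion-free quotient of $\Omega^1_X$ with numerically trivial determinant cannot restrict to an ample bundle on an MR-general curve. Your remarks on saturation and on the implicit non-uniruledness hypothesis are apt and consistent with the paper's intended setting.
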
 

\vskip .2cm If $X$ is not uniruled, but $\Omega^1_X$ not generically ample, then we can take determinants in the setting of
Proposition \ref{flatquotient}, and obtain 

\begin{corollary} \label{cor6} Let $X$ be a $\mathbb Q-$factorial normal $n-$dimensional projective variety which is not uniruled. 
If $\Omega^1_X$ is not generically ample,  then there exists a $\mathbb Q-$Cartier divisor $D \equiv 0,$ a number $q$ and
a non-zero section in $H^0(X,(\bigwedge^qT_X)^{**} \otimes \sO_X(D)^{**}).$ In particular, if $X$ is smooth, then there is a numerically
flat line bundle $L$ such that $H^0(X,\bigwedge^qT_X \otimes L) \ne 0.$ 
\end{corollary}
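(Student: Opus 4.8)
The plan is to extract the desired section directly from the exact sequence produced by Proposition~\ref{flatquotient}. Since $X$ is not uniruled and $\Omega^1_X$ is not generically ample (for some polarization $h$), that proposition gives a torsion-free quotient $\Omega^1_X \to Q \to 0$ of rank $q$ with $1 \le q \le n$ and $\det Q \equiv 0$. Set $D$ to be a $\mathbb{Q}$-Cartier divisor with $\sO_X(D)^{**} = (\det Q)^{**} = (\bigwedge^q Q)^{**}$; by construction $D \equiv 0$. The surjection $\Omega^1_X \to Q \to 0$ induces, after taking $q$-th exterior powers and biduals, a generically surjective map $(\bigwedge^q \Omega^1_X)^{**} \to (\bigwedge^q Q)^{**} = \sO_X(D)^{**}$, i.e.\ a nonzero element of $\Hom\big((\bigwedge^q \Omega^1_X)^{**}, \sO_X(D)^{**}\big)$. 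Dualizing this homomorphism (using that the bidual of a torsion-free sheaf on a normal variety is reflexive, and that $\Hom$ into a reflexive sheaf only depends on the sheaf in codimension one) yields a nonzero section in $H^0\big(X, (\bigwedge^q T_X)^{**} \otimes \sO_X(D)^{**}\big)$, since $\big((\bigwedge^q \Omega^1_X)^{**}\big)^{*} = (\bigwedge^q T_X)^{**}$ on the smooth locus and both sheaves are reflexive.

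For the last sentence, assume $X$ is smooth. Then $\bigwedge^q T_X$ is already locally free, so the section above lives in $H^0(X, \bigwedge^q T_X \otimes \sO_X(D))$ with $D \equiv 0$ a genuine Cartier divisor (the $\mathbb{Q}$-factoriality and $\mathbb{Q}$-Cartier issues disappear in the smooth case — one may need to pass to a multiple, but a nonzero section of a tensor power persists). It remains to see that $L := \sO_X(D)$ is numerically flat rather than merely numerically trivial: a line bundle with $c_1(L) \equiv 0$ is automatically numerically flat in the sense of \cite{DPS94}, since both $L$ and $L^*$ are nef. Thus $H^0(X, \bigwedge^q T_X \otimes L) \ne 0$ with $L$ numerically flat, as claimed.

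The only genuine subtlety — the "main obstacle" — is bookkeeping with reflexive hulls and $\mathbb{Q}$-Cartier divisors on the singular variety: one must check that $(\bigwedge^q Q)^{**}$ really is of the form $\sO_X(D)^{**}$ for an actual (Weil, $\mathbb{Q}$-Cartier) divisor $D$ with $D \equiv 0$, and that "generically surjective map of sheaves" dualizes to "nonzero section" without losing anything. Both points are standard given $\mathbb{Q}$-factoriality: $\det Q$ is $\mathbb{Q}$-Cartier by hypothesis, $\det Q \equiv 0$ from Proposition~\ref{flatquotient}, and a nonzero map $\sF \to \sG$ of torsion-free sheaves that is an isomorphism in codimension one (or just generically nonzero) dualizes to a nonzero map $\sG^* \to \sF^*$, hence — tensoring appropriately and taking biduals — to the asserted section. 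In short, the corollary is essentially a restatement of Proposition~\ref{flatquotient} after passing to determinants and dualizing, and the proof is a short formal argument once those identifications are in place.
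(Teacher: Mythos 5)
Your argument is correct and is essentially the paper's own (only sketched) proof: the paper likewise obtains the section by taking determinants/exterior powers in the exact sequence $\Omega^1_X \to Q \to 0$ furnished by Proposition~\ref{flatquotient} (equivalently, from the dual subsheaf $\sF \subset T_X$ with $\det\sF \equiv 0$) and reinterpreting the resulting nonzero map of rank-one reflexive sheaves as a section of $(\bigwedge^q T_X)^{**}\otimes \sO_X(D)^{**}$. The reflexive-hull and $\mathbb Q$-Cartier bookkeeping you supply is exactly what the paper leaves implicit.
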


\vskip .2cm 
Observe that the subsheaf $\sS \subset \Omega^1_X$ constructed in the proof of Proposition \ref{flatquotient} is $\alpha-$destabilizing 
for all
$\alpha \in \overline{ME} \setminus \{0\} $. Therefore we obtain

\begin{corollary} \label{corstab} Let $X$ be a $\mathbb Q-$factorial normal projective variety which is not uniruled.  If $\Omega^1_X$ is $\alpha-$semi-stable for some 
$\alpha \in \overline{ME} \setminus \{0\} $,
then $\Omega^1_X$ is generically ample unless $K_X \equiv 0.$ 
\end{corollary}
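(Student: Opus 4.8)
The plan is to argue by contradiction, feeding the subsheaf produced by Proposition~\ref{flatquotient} into the notion of $\alpha$-semi-stability. So assume that $\Omega^1_X$ is $\alpha$-semi-stable for some $\alpha\in\overline{ME}(X)\setminus\{0\}$, that $\Omega^1_X$ is not generically ample, and that $K_X\not\equiv 0$; I want to derive a contradiction. Fix a polarization $h$ for which $\Omega^1_X$ fails to be $h$-generically ample. By Proposition~\ref{flatquotient} (and the construction in its proof) there is a reflexive subsheaf $\sS\subset\Omega^1_X$ whose quotient $Q=\Omega^1_X/\sS$ is torsion free of rank $p$ with $1\le p\le n-1$ — the excluded value $p=n$ being exactly the case $K_X\equiv 0$ — and with $\det Q\equiv 0$. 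In particular $\rk\sS=n-p\ge 1$, and from $0\to\sS\to\Omega^1_X\to Q\to 0$ we get $c_1(\sS)=c_1(\Omega^1_X)-c_1(Q)\equiv K_X$, since $c_1(Q)\cdot\gamma=0$ for every class $\gamma$.

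Next I would compare normalized slopes against $\alpha$. Because $X$ is not uniruled, $K_X$ is pseudo-effective, so the duality theorem (Theorem~\ref{dualitytheorem}), identifying the dual cone of $\overline{ME}(X)$ with $\sPS(X)$, gives $K_X\cdot\alpha\ge 0$. Hence
\[
\frac{c_1(\sS)\cdot\alpha}{\rk\sS}=\frac{K_X\cdot\alpha}{n-p}\ \ge\ \frac{K_X\cdot\alpha}{n}=\frac{c_1(\Omega^1_X)\cdot\alpha}{\rk\Omega^1_X},
\]
because $\rk\sS=n-p<n$. Thus $\sS$ is a proper subsheaf of $\Omega^1_X$ whose normalized $\alpha$-slope is at least that of $\Omega^1_X$, and the inequality is strict as soon as $K_X\cdot\alpha>0$; this is precisely the observation recorded just before the statement, namely that $\sS$ is $\alpha$-destabilizing. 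A strict inequality contradicts $\alpha$-semi-stability, so $\Omega^1_X$ must be generically ample (or $K_X\equiv 0$), as claimed.

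The part that needs care is ensuring strictness, i.e. ruling out $K_X\cdot\alpha=0$. Here one uses that $\sS$ was built from the maximal \emph{ample} subsheaf $\sS_C$ of $\Omega^1_X\vert C$ for an MR-general curve $C$ relative to the bad polarization $h$: since $\sS_C$ is ample and nonzero and $Q\vert C$ is numerically flat, $K_X\cdot[C]=\deg(\sS_C)>0$, so $K_X$ pairs strictly positively with (at least) the movable classes of complete-intersection type; equivalently, for $\alpha$ in the interior of $\overline{ME}(X)$ the pseudo-effectivity of $K_X$ together with $K_X\not\equiv 0$ already forces $K_X\cdot\alpha>0$. This is the one genuinely delicate point; everything else is a formal slope computation resting only on Proposition~\ref{flatquotient}, the implication ``$X$ not uniruled $\Rightarrow K_X$ pseudo-effective'', and the BDPP duality of Theorem~\ref{dualitytheorem}.
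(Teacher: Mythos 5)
Your argument is essentially the paper's own: the printed proof of this corollary consists of the single remark preceding it, namely that the subsheaf $\sS\subset\Omega^1_X$ constructed in Proposition \ref{flatquotient} is $\alpha$-destabilizing for every $\alpha\in\overline{ME}(X)\setminus\{0\}$, and you have written out the slope computation behind that remark. The computation is correct: $c_1(\sS)\equiv K_X$, $\rk\sS=n-p<n$ once $K_X\not\equiv 0$, and $K_X\cdot\alpha\ge 0$ by Theorem \ref{dualitytheorem} because $K_X$ is pseudo-effective; hence $\mu_\alpha(\sS)\ge\mu_\alpha(\Omega^1_X)$, with strict inequality exactly when $K_X\cdot\alpha>0$. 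You are also right that strictness is the only delicate point.

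Your treatment of that point, however, does not close the gap, and for the statement as literally written it cannot be closed. First, the inference ``$K_X\cdot[C]=\deg\sS_C>0$ for the one MR-general curve $C$ attached to the bad polarization $h$, hence $K_X$ is positive on all complete-intersection classes'' is a non sequitur; the true statement you want is that a pseudo-effective class which is not numerically trivial has strictly positive degree against every ample class, and more generally against every class in the interior of $\overline{ME}(X)$. Second, and more seriously, the hypothesis permits $\alpha$ on the boundary of $\overline{ME}(X)$ with $K_X\cdot\alpha=0$; there the slope inequality degenerates to an equality, which is perfectly compatible with semi-stability, and no contradiction results. This is not a repairable omission: take $X=C\times S$ with $g(C)\ge 2$ and $S$ a K3 surface, and let $\alpha$ be the class of $\{c\}\times C_S$ for $(C_S)$ a covering family of curves on $S$. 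Then $\alpha\in\overline{ME}(X)\setminus\{0\}$, $K_X=p_1^*K_C\not\equiv 0$, $K_X\cdot\alpha=0$, and $\Omega^1_X\vert_{\{c\}\times C_S}=\sO\oplus\Omega^1_S\vert_{C_S}$ is semi-stable of slope $0$, so $\Omega^1_X$ is $\alpha$-semi-stable; yet $\Omega^1_X$ is not generically ample, since $p_2^*\Omega^1_S$ is a quotient with numerically trivial determinant. So the corollary really needs $K_X\cdot\alpha>0$ (for instance $\alpha$ in the interior of $\overline{ME}(X)$, or an ample class) as an additional hypothesis; with that hypothesis your proof is complete, and the defect lies as much in the paper's one-line argument as in your reconstruction of it.
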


For various purposes which become clear immediately we need to consider certain singular varieties arising
from minimal model theory. We will not try to prove things in the greatest possible generality, but restrict to
the smallest class of singular varieties we need. We adopt the following notation.

\begin{definition} A terminal $n-$fold $X$ is a normal projective variety with at most terminal singularities which is also
$\bQ-$factorial. If additionally $K_X$ is nef, $X$ is called minimal. 
\end{definition}

Since the (co)tangent sheaf of a minimal variety $X$ is always $K_X-$semi-stable \cite{Ts88}, \cite{En88}, we obtain

\begin{corollary} \label{cor4a} 
Let $X$ be a minimal projective variety such that 
$K_X$ is big. 
Then $\Omega^1_X$ is generically ample. 
\end{corollary}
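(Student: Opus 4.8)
The plan is to argue by contradiction, playing Proposition \ref{flatquotient} against the $K_X$-semistability of $\Omega^1_X$ supplied by \cite{Ts88} and \cite{En88}.

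Since $K_X$ is nef and big we have $K_X^n>0$; in particular $K_X\not\equiv 0$, and $X$ is of general type, hence not uniruled. Suppose that $\Omega^1_X$ is not generically ample with respect to some polarization $h$. Then Proposition \ref{flatquotient} produces a torsion free quotient $\Omega^1_X\to Q\to 0$ of some rank $p$ with $\det Q\equiv 0$; because $K_X\not\equiv 0$, the case $p=n$ is excluded, so $1\le p\le n-1$ and $Q$ is a proper, nonzero quotient (its kernel $\sS$ has rank $n-p\ge 1$). Note also that $X$, having terminal singularities, is smooth in codimension $2$, so $\Omega^1_X$ is locally free in codimension $1$ and the degrees below are well defined.

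Now compare normalized slopes with respect to the nef and big class $K_X$ (equivalently, with respect to the $(n-1)$-tuple $(K_X,\dots,K_X)$; the slope extends by continuity from ample classes, $K_X$ being a limit of ample classes). Since $c_1(\Omega^1_X)=K_X$ and $\det Q\equiv 0$,
$$\frac{c_1(Q)\cdot K_X^{n-1}}{p}=0<\frac{K_X^n}{n}=\frac{c_1(\Omega^1_X)\cdot K_X^{n-1}}{n},$$
so $Q$ is a quotient of $\Omega^1_X$ of strictly smaller slope, contradicting the $K_X$-semistability of $\Omega^1_X$. Hence $\Omega^1_X$ is generically ample for every polarization. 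One can also deduce this directly from Corollary \ref{corstab}: for an ample divisor $A$ the classes $(K_X+\varepsilon A)^{n-1}$ are complete intersection (hence movable) classes, so their limit $\alpha:=K_X^{n-1}$ lies in the closed cone $\overline{ME}(X)$, and $\alpha\neq 0$ because $\alpha\cdot K_X=K_X^n>0$; since $\Omega^1_X$ is $\alpha$-semistable and $K_X\not\equiv 0$, Corollary \ref{corstab} yields generic ampleness at once.

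The one point that needs care is the precise meaning of ``$K_X$-semistable'' when $K_X$ is only nef and big rather than ample: one should check that \cite{Ts88} and \cite{En88} provide semistability in the numerical slope form used above (equivalently, $\alpha$-semistability for $\alpha=K_X^{n-1}\in\overline{ME}(X)$ in the sense of the paper). Granting this, there is no further geometric obstacle: Proposition \ref{flatquotient}, resting on Miyaoka's theorem (through Theorem \ref{CP}), has already done the structural work, and the argument reduces to a one-line slope comparison.
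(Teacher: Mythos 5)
Your proof is correct and follows essentially the same route as the paper, which obtains the corollary in one line from the $K_X$-semistability of $\Omega^1_X$ (\cite{Ts88}, \cite{En88}) combined with Corollary \ref{corstab}. Your explicit verification that $\alpha=K_X^{n-1}$ is a nonzero class in $\overline{ME}(X)$ (as a limit of the complete intersection classes $(K_X+\varepsilon A)^{n-1}$, with $\alpha\cdot K_X=K_X^n>0$ by bigness) is precisely the detail the paper leaves implicit.
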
 

Actually \cite{En88} gives more: $\Omega^1_X$ is generically ample for all smooth $X$ admitting a {\it holomorphic} map 
to a minimal variety. 
In general however a manifold of general type will not admit a holomorphic map to a minimal model. Nevertheless we can prove

\begin{theorem} \label{genample} 
Let $X$ be a projective manifold or terminal variety of general type. Then $\Omega^1_X $ is generically ample. 
\end{theorem}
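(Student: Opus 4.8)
The plan is to reduce the general-type case to the minimal case, where Corollary~\ref{cor4a} applies. Since $X$ is of general type, by \cite{BCHM09} there is a birational minimal model $X \dasharrow X_{\min}$, where $X_{\min}$ is a terminal $n$-fold with $K_{X_{\min}}$ nef; moreover $K_{X_{\min}}$ is big because the Kodaira dimension is a birational invariant. Corollary~\ref{cor4a} then tells us $\Omega^1_{X_{\min}}$ is generically ample. So the remaining task is to transport generic ampleness back up through the birational map. Concretely, I would take a common resolution $p: \hat X \to X$, $q: \hat X \to X_{\min}$ with $\hat X$ smooth, and compare $\Omega^1_X$ (or its pullback to $\hat X$) with $q^*\Omega^1_{X_{\min}}$.

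The key mechanism is that on an MR-general curve $C \subset \hat X$ (with respect to a polarization of the form $p^*h + \varepsilon(\text{exceptional correction})$, or more simply, take $C$ avoiding all exceptional loci, which is possible since MR-general curves move in a basepoint-free family), the natural map $p^*\Omega^1_X \to \Omega^1_{\hat X}$ is an isomorphism along $C$, and likewise $q^*\Omega^1_{X_{\min}} \cong \Omega^1_{\hat X}$ along $C$ provided $C$ avoids the $q$-exceptional locus and its image avoids $\operatorname{Sing}(X_{\min})$. Hence $\Omega^1_{\hat X}|_C \cong q^*(\Omega^1_{X_{\min}})|_C = \Omega^1_{X_{\min}}|_{q(C)}$, and the latter is ample once $q(C)$ is MR-general on $X_{\min}$. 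So the first step is to check that the image under $q$ of an MR-general curve on $\hat X$ is (or can be arranged to be) MR-general on $X_{\min}$ — this follows from Mehta--Ramanathan (Theorem~\ref{MR}) applied on $X_{\min}$ together with the fact that $q$ is birational, so that a general member of $|m_1 q^*H_1| \cap \cdots \cap |m_{n-1}q^*H_{n-1}|$ maps isomorphically to a general complete-intersection curve on $X_{\min}$. Then $\Omega^1_{\hat X}$ is generically ample for the chosen polarization, hence $\Omega^1_{\hat X}|_C$ ample; restricting the surjection $p^*\Omega^1_X \to \Omega^1_{\hat X}$ to $C$, we get that $p^*\Omega^1_X|_C$ has an ample quotient of full rank, i.e. $p^*\Omega^1_X|_C$ is ample, i.e. $\Omega^1_X|_{p(C)}$ is ample; and $p(C)$ is MR-general on $X$. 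Since the polarization $h$ on $X$ was arbitrary, $\Omega^1_X$ is generically ample.

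The main obstacle I expect is not the birational bookkeeping but ensuring that generic ampleness — as opposed to mere generic nefness — is genuinely preserved, i.e. controlling the ample \emph{quotient} rather than just the maximal ample subsheaf. The subtlety is that a surjection $\sE \to \sF$ of bundles on a curve with $\sF$ ample does \emph{not} force $\sE$ ample (only $\det$ considerations are immediate). So I would instead argue in the other direction: on $C$ MR-general, $\Omega^1_{\hat X}|_C \cong p^*\Omega^1_X|_C = \Omega^1_X|_{p(C)}$ is an \emph{isomorphism} (not just a surjection), because $p$ is an isomorphism near $C$ when $C$ avoids the exceptional locus — and a general complete-intersection curve does avoid any fixed proper closed subset. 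This makes the identification of restrictions exact and the ampleness passes trivially. The one point needing care is that $X_{\min}$ is singular: one must invoke the singular version of $K$-semistability of $\Omega^1_{X_{\min}}$ from \cite{Ts88}, \cite{En88} (as already packaged in Corollary~\ref{cor4a}), and check the MR-general curve on $X_{\min}$ meets the singular locus in nothing — automatic since $\operatorname{Sing}(X_{\min})$ has codimension $\geq 3$ for terminal singularities, while MR-general curves are cut out by ample divisors and hence avoid any fixed closed set of codimension $\geq 2$. Finally, if one prefers to avoid resolutions altogether, the alternative is: pull the destabilizing quotient $\Omega^1_X \to Q$ with $\det Q \equiv 0$ from Proposition~\ref{flatquotient} down to $X_{\min}$ via the birational map and contradict Corollary~\ref{cor4a} there; but handling the indeterminacy of $X \dasharrow X_{\min}$ on $\Omega^1$ is exactly what the resolution cleans up, so I would keep the resolution.
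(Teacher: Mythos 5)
Your reduction to a minimal model and the appeal to Corollary \ref{cor4a} are the right starting points, but the mechanism you choose for transporting generic ampleness back to $X$ breaks down, and the route you dismiss in your last sentence is the one the paper actually uses. The core problem is that MR-generality cannot be carried across a birational map. An MR-general curve is a complete intersection of general members of $|m_iH_i|$ with $H_i$ ample; such a curve avoids any fixed closed set of codimension $\geq 2$, but it necessarily \emph{meets} every divisor, since $E\cdot m_1H_1\cdots m_{n-1}H_{n-1}>0$. The exceptional locus of $p:\hat X\to X$ is a divisor (and typically so is that of $q$), so your curve $C$ cannot ``avoid all exceptional loci,'' and the identifications $\Omega^1_{\hat X}|_C\cong p^*\Omega^1_X|_C\cong q^*\Omega^1_{X_{\min}}|_C$ fail for $C$ MR-general on $\hat X$ with respect to an ample polarization. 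If instead you take $C$ to be cut out by general members of $|m_iq^*H_i|$, then $C$ does avoid the $q$-exceptional locus and you do get ampleness of $\Omega^1_{\hat X}|_C$; but $p(C)$ is then MR-general only for the classes $p_*q^*H_i$, which are big and movable but in general not ample on $X$. So this establishes nothing for an arbitrarily prescribed ample polarization $h$ on $X$, which is what Definition \ref{basicdef} demands: the polarization is not ``arbitrary'' at the end of your argument, it has been fixed by the choice of $q^*H_i$.

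The fix is exactly the alternative you set aside: convert the failure of generic ampleness into a polarization-free datum. If $\Omega^1_X$ is not $h$-generically ample for some $h$, Proposition \ref{flatquotient} produces a reflexive subsheaf $\sS\subset T_X$ with $\det\sS\equiv 0$, a numerical condition making no reference to $h$. Running the MMP of \cite{BCHM09}, $f:X\dasharrow X'$ with $X'$ minimal, every step is either a flip (an isomorphism in codimension one) or a divisorial contraction (which pushes divisors down but never extracts new ones), so $\sS$ induces a subsheaf $\sS'\subset T_{X'}$ with $\det\sS'\equiv 0$; this contradicts Corollary \ref{cor4a}, since a generically ample $\Omega^1_{X'}$ admits no torsion free quotient with numerically trivial determinant. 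The indeterminacy of $f$ that you were worried about is harmless at the level of determinants precisely because no extraction occurs. This is the paper's proof; no common resolution is needed, and introducing one creates exactly the exceptional divisors that defeat the curve-transport argument.
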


\begin{proof} 

If $\Omega^1_X$ would not be generically ample, then we obtain a reflexive subsheaf $\sS \subset T_X$ such that $\det \sS \equiv 0.$ 
By \cite{BCHM09} there exists a sequence of contractions and flips
\begin{equation} f: X \dasharrow X' \end{equation} 
such that $X'$ is minimal. Since $f$ consists only of contractions and flips, we obtain an induced subsheaf $\sS' \subset T_{X'} $
such that $\det S' \equiv 0.$ Here it is important that no blow-up (``extraction'') is involved in $f.$ From Corollary \ref{cor4} we obtain 
a contradiction. 
\end{proof} 

Now Lemma \ref{lemmasections} gives 
\begin{corollary} \label{cor5} 
Let $X$ be a projective manifold of general type. Then $$H^0(X,(T_X)^{\otimes m }) = 0 $$ for all positive integers $m.$ 
\end{corollary}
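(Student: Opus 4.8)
The plan is to deduce the corollary directly from the generic ampleness of the cotangent bundle established in Theorem~\ref{genample}, combined with the elementary vanishing recorded in Lemma~\ref{lemmasections}(1). Since $X$ is a projective manifold of general type, Theorem~\ref{genample} tells us that $\Omega^1_X$ is $h$-generically ample for some polarization $h$ (in fact for every $h$). Applying Lemma~\ref{lemmasections}(1) to the bundle $\sE = \Omega^1_X$, so that $\sE^* = T_X$, and taking $L = \sO_X$ — which is certainly numerically trivial — we conclude that $H^0(X,(T_X)^{\otimes m}) = 0$ for every positive integer $m$, which is exactly the assertion.

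For the record I would also unwind why Lemma~\ref{lemmasections}(1) applies in this situation. Fix $h$ and let $C = D_1 \cap \dots \cap D_{n-1}$ be MR-general with $D_i \in |m_i H_i|$ and $m_i \gg 0$; as the $D_i$ vary over general members these complete intersection curves cover $X$. A section $0 \neq s \in H^0(X,(T_X)^{\otimes m})$ restricts to a section of $((T_X)^{\otimes m})|_C = (T_X|_C)^{\otimes m}$. Now $T_X|_C = (\Omega^1_X|_C)^*$, and by generic ampleness $\Omega^1_X|_C$ is ample, hence $T_X|_C$ and all its tensor powers $(T_X|_C)^{\otimes m}$ are negative vector bundles on the curve $C$; a negative bundle on a smooth projective curve has no nonzero global sections. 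Thus $s|_C = 0$ for the general such $C$, and since these curves sweep out $X$ we obtain $s = 0$.

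There is essentially no hard step here: all of the difficulty has already been absorbed into Theorem~\ref{genample}, whose proof rests on \cite{BCHM09}, \cite{Ts88} and \cite{En88}. The only mild point to keep in mind is that one must work with a curve $C$ which is simultaneously MR-general (so that $\Omega^1_X|_C$ is genuinely ample, not merely nef, by Theorem~\ref{MR} and the definition of generic ampleness) and general within a covering family (so that $s|_C = 0$ forces $s = 0$); a general complete intersection of sufficiently positive divisors meets both requirements at once, so no tension arises.
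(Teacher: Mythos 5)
Your proof is correct and is exactly the paper's argument: the paper derives Corollary \ref{cor5} by combining Theorem \ref{genample} with Lemma \ref{lemmasections}(1) in precisely this way. Your unwinding of why the lemma applies (restriction to an MR-general complete intersection curve, negativity of $(T_X|_C)^{\otimes m}$, and the covering property of such curves) is a sound elaboration of what the paper leaves implicit.
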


More generally,  $H^0(X,(T_X)^{\otimes m } \otimes L^*) = 0 $ if $L$ is a pseudo-effective line bundle.  

\vskip .2cm
We now turn to the case that $X$ is not of general type. 
We start in dimension 2. 

\begin{theorem} Let $X$ be a smooth projective surface with $\kappa (X) \geq 0. $ 
Suppose that $H^0(X,T_X \otimes L) \ne 0$, where $L$ is a numerically trivial line bundle.
Then the non-trivial sections of $T_X \otimes L$ do not have any zeroes, in particular $c_2(X) = 0$ and one of the following holds up 
to finite \'etale cover.
\begin{enumerate}
\item $X$ is a torus
\item $\kappa (X) = 1$ and $X = B \times E$ with $g(B) \geq 2$ and $E$ elliptic.
\end{enumerate}
In particular, $X$ is minimal.\\
Conversely, if $X$ is (up to finite \'etale cover) a torus or of the form $X = B \times E$ with $g(B) \geq 2$ and $E$ elliptic,
then $H^0(X,T_X \otimes L) \ne 0$ for some numerically trivial line bundle $L.$

\end{theorem}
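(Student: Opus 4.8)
The plan is to analyze the section $0 \ne s \in H^0(X, T_X \otimes L)$ using the positivity results already established, splitting the argument according to the Kodaira dimension $\kappa(X) \in \{0,1,2\}$. Since a surface of general type has $\Omega^1_X$ generically ample by Corollary \ref{cor4a} (or Theorem \ref{genample}), Lemma \ref{lemmasections}(1) rules out $\kappa(X) = 2$ immediately, so we may assume $\kappa(X) \in \{0,1\}$. The dual section $s$ gives an exact sequence $0 \to \sO_X(-D) \to T_X \to \sQ \to 0$ where $D \equiv 0$ is the zero divisor of $s$ (as a section of $T_X \otimes L$, twisting by $L^*$) and $\sQ$ has rank $1$. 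By Miyaoka's Theorem \ref{miy}, $\Omega^1_X$ is generically nef, so by Lemma \ref{lemmasections}(2) the section $s$ has no zeroes in codimension $1$, hence $D = 0$ and we get an honest subbundle $L^* \hookrightarrow T_X$, i.e. $T_X \otimes L$ has a nowhere-vanishing section. Dually $\Omega^1_X$ has a torsion-free quotient of rank $1$ with determinant numerically trivial.

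Next I would identify the quotient line bundle. From $0 \to L^* \to T_X \to \sQ \to 0$ we get $\det T_X = -K_X = L^* \otimes \sQ$, hence $\sQ = -K_X \otimes L$, and the sequence says $K_X \otimes L^{-1} \otimes (\text{something}) $—more precisely $c_1(\sQ) = -K_X$ numerically, so $K_X \cdot h = -c_1(\sQ)\cdot h$. Applying Theorem \ref{CP} (Bogomolov–Castelnuovo–de Franchis type statement: for $X$ non-uniruled, any torsion-free quotient of $\Omega^1_X$ has pseudo-effective determinant) to the dual quotient $\Omega^1_X \to L \to 0$, and also using that $K_X$ is pseudo-effective since $\kappa(X) \ge 0$, one forces $K_X \equiv 0$ when the sub-line-bundle $L^*$ is itself "as negative as possible". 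I expect the cleanest route is: the rank-$1$ subsheaf $L^* \subset T_X$ is a foliation (automatically integrable in dimension $2$ since $\bigwedge^2 L^*$ has rank $0$), with numerically trivial canonical class, and then invoke the classification of surfaces carrying a foliation by a numerically trivial line bundle — this is where I would lean on the structure of minimal surfaces. Since $T_X$ contains $L^*$ with $L^* \equiv 0$, and $T_X$ contains no ample subsheaf on an MR-general curve, Proposition \ref{flatquotient}'s proof shows $K_X \cdot h = c_1(\sS) \cdot h = 0$ for the maximal destabilizing $\sS$; combined with $K_X$ pseudo-effective this gives $K_X \equiv 0$, so $X$ is minimal and $\kappa(X) \in \{0\}$ forces (after the abundance/classification of surfaces with $K_X \equiv 0$) that $X$ is a torus or bielliptic up to étale cover — but bielliptic surfaces have no nonzero global vector fields twisted by numerically trivial bundles unless... so the torus case survives. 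For $\kappa(X) = 1$ one instead runs the Iitaka fibration $X \to B$ and shows the foliation $L^*$ must be tangent to the fibres (using that $\Omega^1_{X/B}$ pulls back appropriately and $K_{X/B}$ is pseudo-effective by Corollary \ref{cor4}), forcing the fibration to be smooth and isotrivial with elliptic fibres, whence after étale cover $X = B \times E$.

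Then I would compute $c_2$. Given a nowhere-vanishing section of $T_X \otimes L$ with $L \equiv 0$: the section defines $L^* \subset T_X$ as a subbundle, so the quotient $\sQ$ is locally free of rank $1$, and $c_2(T_X) = c_1(L^*) \cdot c_1(\sQ) = 0$ since $c_1(L^*) \equiv 0$. Thus $c_2(X) = 0$. Combined with $K_X \equiv 0$ (case $\kappa = 0$) this is consistent with $X$ a torus; in the $\kappa = 1$ case $c_2(X) = 0$ together with $X = B \times E$ is again consistent ($c_2(B \times E) = c_2(B) \cdot c_2(E) \cdot(\text{cross terms}) = \chi(B)\chi(E) = 0$). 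The converse direction is the easy part: on a torus $T_X$ is trivial so $H^0(X, T_X) \ne 0$ already with $L = \sO_X$; on $X = B \times E$ (or a finite étale quotient thereof), the vector field $\partial/\partial z$ along the elliptic factor $E$ gives a nonzero section of $T_X$, and on the étale quotient it descends to a section of $T_X \otimes L$ for a suitable torsion $L$ coming from the covering group action. One should also note "in particular $X$ is minimal": this follows because in all surviving cases $K_X$ is nef (numerically trivial, or pulled back from a curve of genus $\ge 2$).

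The main obstacle I anticipate is the $\kappa(X) = 1$ case: showing that the foliation $L^* \subset T_X$ with $L^* \equiv 0$ must coincide with the relative tangent sheaf of the Iitaka fibration, and then upgrading this to an actual product decomposition after étale cover. The numerical triviality of $\det \sF = L^*$ forces the fibration $f \colon X \to B$ (which is essentially unique once $\kappa = 1$) to have no singular fibres and no multiple fibres after base change, so that it becomes an elliptic bundle; then one argues the bundle is isotrivial (a non-isotrivial elliptic surface over a curve has positive $c_2$, or has $\Omega^1_X$ generically ample restricted to sections of large degree), and finally trivializes it by a finite étale base change, invoking that an isotrivial elliptic bundle over a curve of genus $\ge 2$ becomes a product after étale cover. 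The case $g(B) \le 1$ is excluded because then $\kappa(X) \le 0$. Handling multiple fibres correctly — ensuring they disappear rather than merely become tame — and verifying that the nowhere-zero twisted vector field genuinely prohibits them, is the delicate technical point; the rest is bookkeeping with the Enriques–Kodaira classification.
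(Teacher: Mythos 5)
Your proposal has a genuine gap at its central step. You deduce from Lemma \ref{lemmasections}(2) (equivalently from generic nefness of $\Omega^1_X$) that the zero divisor $D$ of $s$ vanishes and then immediately conclude that $s$ is \emph{nowhere} vanishing, so that $L^*\hookrightarrow T_X$ is a subbundle and $c_2(X)=c_1(L^*)\cdot c_1(\sQ)=0$. But generic nefness only excludes zeroes in codimension one; it leaves open a finite zero set $Z$, in which case the quotient is $\sI_Z\otimes\sQ$ and $c_2(X)=\deg Z\ge 0$, not $0$. The assertion that the sections have no zeroes at all is precisely the nontrivial content of the theorem, and it cannot be read off from the positivity lemmas: the paper obtains it only \emph{after} the structural analysis, namely (i) in the torsion case by reducing to an honest vector field and invoking Rosenlicht/Lieberman (a vector field with a zero forces uniruledness), (ii) in the non-torsion, $\kappa=1$, minimal case by showing $\lambda(s)=0$ for the composition $T_X\otimes L\to f^*(T_B)\otimes L$, so that $s$ factors through the \emph{line bundle} $T_{X/B}\otimes L$, and a section of a line bundle with zero locus of codimension $\ge 2$ is nowhere zero. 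Your $c_2$ computation is therefore circular as written.

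A second, related omission is the non-minimal case. You dismiss it with ``in all surviving cases $K_X$ is nef,'' but that presupposes the classification you are trying to prove. The paper rules out non-minimal $X$ by a separate argument: blow down a $(-1)$-curve to $p\in X'$, note $\sigma_*(T_X)=\sI_p\otimes T_{X'}$, so the induced section $s'$ of $T_{X'}\otimes L'$ vanishes at $p$ — contradicting the no-zeroes statement already established for minimal models. Without this step (or some substitute) your argument does not exclude blow-ups of the listed surfaces. Finally, your $\kappa=0$ branch rests on the false intermediate claim that a numerically trivial sub-line-bundle of $T_X$ prevents $\Omega^1_X\vert C$ from containing an ample subsheaf (it does not: consider $B\times E$ with $g(B)\ge 2$), and your $\kappa=1$ branch is only a sketch that defers exactly the delicate points (killing multiple fibres via pseudo-effectivity of $K_{X/B}$, local triviality, and the étale trivialization). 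The overall strategy — exclude $\kappa=2$ by generic ampleness, split into $\kappa=0,1$, study the rank-one foliation — is the right one and parallels the paper, but the proof is not complete as proposed.
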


\begin{proof}
Fix a non vanishing section $s \in H^0(X,T_X \otimes L). $ Observe that due to Theorem \ref{miy} the section $s$ cannot have zeroes in codimension $1$. Thus $Z = \{s = 0\}$ is at most finite. 
Dualizing, we obtain an epimorphism
\begin{equation} 0 \to \sG \to  \Omega^1_X \to \sI_Z \otimes L^* \to 0 \end{equation}
with a line bundle $\sG \equiv K_X.$ 
From Bogomolov's theorem \cite{Bo79}, we have $\kappa (\sG) \leq 1,$ hence $\kappa (X) \leq 1.$ 
Next observe that if $L$ is torsion, i.e. $L^{\otimes m} = \mathcal O_X$ for some $m$, then after finite \'etale cover, we may
assume $L = \mathcal O_X;$ hence $X$ has a vector field $s$. This vector field cannot have a zero, otherwise $X$ would be uniruled
(see e.g. \cite{Li78}. Then a theorem of Lieberman \cite{Li78} applies and $X$ is (up to finite \'etale cover) a torus or a poduct $E \times C$ with $E$ 
elliptic and $g(C) \geq 2.$ \\
So we may assume that $L$ is not torsion; consequently $q(X) \geq 1.$ 

\vskip .2cm 
We first suppose that $X$ is minimal. If $\kappa (X) = 0$, then clearly $X$ is a torus up to finite \'etale cover. 
So let $\kappa (X) = 1.$ \\
We start by ruling out $g(B) = 0.$ In fact, if $B = \bP_1,$ then the semi-negativity of $R^1f_*(\mathcal O_X)$ together with $q(X) \geq 1$
shows via the Leray spectral sequence that $q(X) = 1.$ Let $g: X \to C$ be the Albanese map to an elliptic curve $C.$ Then (possibly
after finite \'etale cover of $X$), $L = g^*(L')$ with a numerically line bundle $L'$ on $C$, which is not torsion. Since the general
fiber $F$ of $f$ has an \'etale map to $C$, it follows that $L \vert F$ is not torsion. But then $H^0(F,T_X \otimes L \vert F) = 0,$ a
contradiction the existence of the section $s.$ Hence $g(B) \geq 1.$ \\
Consider the natural map
$$ \lambda: T_X \otimes L \to f^*(T_B) \otimes L. $$
Since $L$ is not torsion, $\lambda(s) = 0$ (this property of $L$ is of course only needed when $g(B) = 1).$ 
Therefore $s = \mu(s'),$ where 
\begin{equation} \mu: T_{X/B} \otimes L \to T_X \otimes L \end{equation}
is again the natural map. 
Recall that by definition $T_{X/B} = (\Omega^1_{X/B})^*,$ which is a reflexive sheaf of rank 1, hence a line bundle. 
Now recall that $s$ has zeroes at most in a finite set, so does $s'$. Consequently
$$ T_{X/B} \otimes L = \mathcal O_X. $$
On the other hand
$$  T_{X/B} = -K_X \otimes f^*(K_B) \otimes \mathcal O_X(\sum (m_i-1) F_i), $$
where the $F_i$ are the multiple fibers.
Putting things together, we obtain
$$ K_{X/B} = L \otimes  \mathcal O_X(\sum (m_i-1) F_i).$$
Since $K_{X/B}$ is pseudo-effective (see Corollary \ref{cor4} we cannot have any multiple fibers, hence $K_{X/B} \equiv 0.$ 
It follows that 
$f$ must be locally trivial (see e.g. \cite{BHPV04}, III.18, and also that $g(B) \geq 2.$ Then $X$ becomes actually a product after finite
\'etale cover.

\vskip .2cm 
We finally rule out the case that $X$ is not minimal. So suppose $X$ not minimal and let $\sigma: X \to X'$ be the blow-down of a $(-1)-$curve to a point $p.$
Then we can write $L =  \sigma^*(L')$ with some numerically trivial line bundle $L'$ on $X'$ and the section $s$ induces a section 
$s' \in H^0(X',T_{X'} \otimes L').$ Notice that $\sigma_*(T_X) = \sI_p \otimes T_{X'},$ hence $s'(p) = 0.$ 
Therefore we are reduced to the case where $X'$ is minimal and have to derive a contradiction. Now $s'$ 
has no zeroes by what we have proved before. This gives the contradiction we are looking for. 

\qed 
\end{proof} 

\begin{corollary} \label{cor10} Let $X$ be a smooth projective surface with $\kappa (X) \geq 0.$ The cotangent bundle $\Omega^1_X$ is not 
generically ample 
if and only if $X$ is a minimal surface with $\kappa = 0$ (i.e., a torus, hyperelliptic, K3 or Enriques) or $X$ is a minimal 
surface with $\kappa = 1$ and a locally trivial Iitaka fibration; in particular
$c_2(X) = 0$ and $X$ is a product after finite \'etale cover of the base.  
\end{corollary}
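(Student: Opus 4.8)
The plan is to derive the corollary from the preceding theorem together with Proposition \ref{flatquotient}, treating the two implications separately; the substantive work has already been done in the preceding theorem.

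For the ``if'' direction I would argue directly. If $X$ is minimal with $\kappa(X) = 0$, the Enriques--Kodaira classification gives that $K_X$ is torsion, so $K_X \equiv 0$; hence $\deg(\Omega^1_X|_C) = K_X \cdot C = 0$ for every MR-general curve $C$, and a vector bundle of degree $0$ on a curve cannot be ample (its determinant would be an ample line bundle of degree $0$), so $\Omega^1_X$ is not generically ample. If $X$ is minimal with $\kappa(X) = 1$ and locally trivial Iitaka fibration $f : X \to B$, then $g(B) \geq 2$ and $f$ becomes trivial after a finite \'etale base change, so up to finite \'etale cover $X \cong B \times E$ with $E$ elliptic; by the converse part of the preceding theorem $H^0(X, T_X \otimes L) \neq 0$ for some numerically trivial line bundle $L$, and Lemma \ref{lemmasections}(1) then shows that $\Omega^1_X$ is not generically ample.

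For the ``only if'' direction, assume $\Omega^1_X$ is not generically ample. Since $\kappa(X) \geq 0$, $X$ is not uniruled, so Proposition \ref{flatquotient} produces a torsion-free quotient $\Omega^1_X \to Q \to 0$ with $\det Q \equiv 0$ and $1 \leq p := \rk Q \leq 2$. If $p = 2$, then $K_X \equiv 0$; thus $K_X$ is nef, so $X$ is minimal, and since a numerically trivial line bundle has Kodaira dimension $\leq 0$ we get $\kappa(X) = 0$, whence $X$ is a torus, a hyperelliptic surface, a K3 surface or an Enriques surface. If $p = 1$, write $Q = \sI_Z \otimes M$ with $M := Q^{**} \equiv 0$ and $Z$ finite; dualizing $0 \to \sS \to \Omega^1_X \to Q \to 0$ on the smooth surface $X$, and using that $\mathcal{H}om(\sI_Z, \sO_X) = \sO_X$ because $Z$ has codimension $2$, one obtains an inclusion $M^{-1} \hookrightarrow T_X$, i.e. a non-zero section of $T_X \otimes M$ with $M$ numerically trivial. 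The preceding theorem now applies: $X$ is minimal, $c_2(X) = 0$, and up to finite \'etale cover $X$ is a torus (so again $\kappa(X) = 0$ and $X$ lies in the list above) or a product $B \times E$ with $E$ elliptic and $g(B) \geq 2$; in the latter case $\kappa(X) = 1$, the Iitaka fibration of $X$ is locally trivial, and the vanishing $c_2(X) = 0$ together with the product structure after a finite \'etale cover of the base follow from this.

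I expect the hard part to be entirely contained in the preceding theorem. What remains here is essentially formal: the dualization step in the case $p = 1$, the identification of ``$X$ minimal with $\kappa = 1$ and locally trivial Iitaka fibration'' with ``$X$ a product $B \times E$ up to finite \'etale cover, $g(B) \geq 2$, $E$ elliptic'', and the appeal to the Enriques--Kodaira classification for the list of minimal surfaces with $\kappa = 0$. The only genuine subtlety is bookkeeping: keeping the $p = 1$ and $p = 2$ alternatives, and the two descriptions of the $\kappa = 1$ case, consistently aligned.
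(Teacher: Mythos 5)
Your derivation follows exactly the route the paper intends: the corollary carries no separate proof there, and the only available argument is the one you give, namely Proposition \ref{flatquotient} to split into the cases $\rk Q=2$ (giving $K_X\equiv 0$, hence the minimal surfaces with $\kappa=0$) and $\rk Q=1$ (where dualizing $\Omega^1_X\to\sI_Z\otimes M\to 0$ on the smooth surface yields a non-zero section of $T_X\otimes M$ with $M\equiv 0$, so that the preceding theorem applies), together with Lemma \ref{lemmasections}(1) for the converse direction. Those steps, including the dualization and the appeal to the classification for $\kappa=0$, are all correct.

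The one step you dismiss as ``essentially formal'' is not, and in fact it fails as stated: ``$X$ is a product $B\times E$ up to finite \'etale cover of $X$'' does not imply ``the Iitaka fibration of $X$ is locally trivial''. Take $B$ of genus $2$ with its hyperelliptic involution $\iota$, let $E$ be elliptic with translation $\tau$ by a non-trivial $2$-torsion point, and set $X=(B\times E)/\langle(\iota,\tau)\rangle$. The action is free, so $X$ is a smooth minimal projective surface with $\kappa(X)=1$; the vector field $\partial/\partial t$ on $E$ is invariant and descends to a nowhere-zero vector field on $X$, so $\Omega^1_X$ has a trivial quotient line bundle and is not generically ample; yet the Iitaka fibration $X\to B/\iota=\bP_1$ has six multiple fibres (over the Weierstrass points) and is therefore not a fibre bundle — and its base $\bP_1$ admits no \'etale covers at all, so the ``product after finite \'etale cover of the base'' clause also fails literally. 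What the theorem actually yields is that the Iitaka fibration has no singular fibres (equivalently $c_2(X)=0$) and that $X$ becomes a product after a finite \'etale cover of $X$ itself, i.e. the fibration is an elliptic quasi-bundle. The paper's proof of the theorem does rule out multiple fibres, but only in the branch where the twisting line bundle $L$ is non-torsion; the example above lives in the torsion branch, which is handled via Lieberman's theorem and gives only the weaker conclusion. So either ``locally trivial'' must be read in this weaker sense (an imprecision arguably already present in the corollary as stated), or your final identification needs to be repaired; in any case it is not mere bookkeeping, and your write-up should flag it explicitly.
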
 

We now turn to the case of threefolds $X$ - subject to the condition that $\Omega^1_X$ is not generically ample. 
By Theorem \ref{genample} $X$ is not of general type; thus we need only to consider the cases $\kappa (X) = 0,1,2.$
If $K_X \equiv 0$, then of course $\Omega^1_X$ cannot be generically ample. However it is still interesting to study 
numerically trivial foliations 
in this case. 

\begin{theorem} \label{threefolds} 
Let $X$ be a minimal projective threefold with $\kappa(X) = 0.$ 
Let 
$$ 0 \to \sF \to T_X \to Q \to 0$$
be a numerically trivial foliation, i.e., $\det \sF \equiv 0.$ 
Then there exists a finite cover $X' \to X$, \'etale in codimension 2, such that 
$X'$ is a torus or a product $A \times S$ with $A$ an elliptic curve and $S$ a K3-surface. 

\end{theorem}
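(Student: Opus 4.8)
The plan is first to arrange $K_X\cong\sO_X$, and then to argue according to the irregularity $q(X)=h^1(X,\sO_X)$: the Albanese map handles the case $q(X)>0$, while the foliation itself is what rules out the Calabi--Yau case $q(X)=0$. Since $X$ is minimal with $\kappa(X)=0$, the abundance theorem for threefolds shows $K_X$ is semiample, hence torsion; let $\pi\colon\hat X\to X$ be the cyclic cover determined by an isomorphism $\sO_X(mK_X)\cong\sO_X$ with $m$ minimal. A terminal threefold has only isolated singularities, so $\pi$ is \'etale in codimension two, $\hat X$ is again terminal, and $K_{\hat X}\cong\sO_{\hat X}$ (so $\hat X$ is Gorenstein with only isolated cDV points, and is not uniruled); taking the saturation in $T_{\hat X}$ of the pullback of $\sF$ over the smooth locus produces a numerically trivial foliation on $\hat X$ (integrability and numerical triviality of the determinant being preserved, as the quotient is torsion free). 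Replacing $X$ by $\hat X$ I may assume $K_X\cong\sO_X$; then $\det T_X\cong\sO_X$ forces $\det(T_X/\sF)\equiv 0$ as well.

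Suppose $q(X)>0$. Here the conclusion is forced without $\sF$. Since $X$ has rational singularities it carries an Albanese morphism $a\colon X\to A:={\rm Alb}(X)$, and, a resolution of $X$ having Kodaira dimension $0$, Kawamata's theorem gives that $a$ is surjective with connected fibres and $1\le\dim A\le 3$. If $\dim A=3$, then $a$ is a degree-one crepant birational morphism onto the smooth, $\bQ$-factorial variety $A$; as $X$ is terminal it has no exceptional divisor, hence is small, hence an isomorphism, so $X$ is an abelian threefold. If $\dim A\in\{1,2\}$, a general fibre $F$ is smooth with trivial normal bundle and $K_F\cong\sO_F$, hence is an elliptic curve, an abelian surface, or a K3 surface, and $K_{X/A}\equiv 0$; since $\kappa(A)=0$ the variation of this family is zero, so it is isotrivial, and its monodromy preserves a polarisation on $F$ and is therefore finite. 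Thus after a finite \'etale cover $A'\to A$ and normalisation the fibration becomes a locally trivial fibre bundle, with structure group the translations of $F$ when $F$ is an abelian variety, and the trivial group when $F$ is a K3 surface. In the K3 case this is a product $A'\times F$ with $A'$ elliptic; in the remaining cases the total space is a projective, hence abelian, threefold. Every cover used being \'etale in codimension two, the theorem holds in this case.

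Suppose $q(X)=0$, so $\Pic^0(X)=0$ and every numerically trivial reflexive rank-one sheaf on $X$ is torsion. If $\rk\,\sF=2$, then $(T_X/\sF)^{**}$ is such a sheaf; after a cyclic cover $X'\to X$, \'etale in codimension two, it becomes $\sO_{X'}$, yielding a nonzero map $T_{X'}\to\sO_{X'}$, i.e. $H^0(X',\Omega^{[1]}_{X'})\ne 0$, where $\Omega^{[1]}$ denotes the reflexive hull of $\Omega^1$. As this group equals $H^0(Y',\Omega^1_{Y'})$ for a resolution $Y'$ (reflexive $1$-forms extend over resolutions of terminal singularities), we get $q(X')=h^1(\sO_{X'})>0$, and the previous case applies to $X'$ (still minimal with $\kappa=0$ and carrying the pulled-back foliation). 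If $\rk\,\sF=1$, then after a further cyclic cover, \'etale in codimension two, we may assume $\sF\cong\sO_X$, so $\sF\hookrightarrow T_X$ is a nonzero global vector field $s$. Since $K_X\cong\sO_X$ gives $T_X\cong\Omega^{[2]}_X$ (the reflexive hull of $\bigwedge^2\Omega^1_X$), we have $0\ne s\in H^0(X,\Omega^{[2]}_X)$, while
\[ h^0(X,\Omega^{[2]}_X)=h^0(Y,\Omega^2_Y)=h^2(\sO_Y)=h^2(\sO_X)=h^1(\omega_X)=h^1(\sO_X)=q(X), \]
using the extension theorem, Hodge symmetry, rationality of the singularities, and Serre duality on the Cohen--Macaulay threefold $X$. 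Hence either $q(X)>0$ and we conclude as before, or $q(X)=0$, in which case $s$ cannot exist and no numerically trivial foliation is present -- contrary to hypothesis. All cases are covered.

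The delicate points are, first, the technicalities over the singular terminal threefolds produced by the minimal model program -- the index-one cover, the saturation of the foliation, and especially the identifications $H^0(X,\Omega^{[p]}_X)=H^0(Y,\Omega^p_Y)$ together with the birational invariance of $h^{p,0}$, plus keeping track that every cover is \'etale in codimension two -- and, second and most seriously, in the case $q(X)>0$, the proof of isotriviality of the fibration and the finiteness of its monodromy, which is what lets a finite \'etale base change turn the family into a genuine fibre bundle that can then be trivialised; this is where the hypothesis $\kappa(A)=0$ is essential and where the bulk of the work lies.
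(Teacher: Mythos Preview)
Your strategy coincides with the paper's: pass to the index-one cover so that $K_X\cong\sO_X$, use the foliation to force $q(X)>0$ after a further cover \'etale in codimension~$2$, and then read off the structure from the Albanese map. Your argument that the foliation produces a reflexive $1$- or $2$-form, and hence $q>0$, is correct; the paper does the same, phrasing the $2$-form step via Miyaoka's inequality $\chi(\sO_X)\le 0$ rather than your Hodge/Serre chain, but the content is identical.

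Where the paper is sharper is exactly the point you flag as ``where the bulk of the work lies.'' Instead of arguing isotriviality and monodromy finiteness by hand, the paper invokes Kawamata \cite{Ka85}, \S 8, directly: after a finite \'etale base change of the Albanese, $X$ is \emph{birational} to $F\times A$. The case $\dim A=2$ is then excluded because $F$ would be an elliptic curve and $q$ would jump to $3$; when $\dim A=1$ the fibre is (after another \'etale cover) a K3 surface, and Koll\'ar \cite{Ko89} shows the birational map $X\dasharrow F\times A$ is a composition of flops, so $X$ is smooth and the Beauville--Bogomolov decomposition finishes. This bypasses your variation/monodromy discussion entirely and also avoids worrying about singular fibres of $X\to A$. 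If you keep your write-up, you should replace the isotriviality paragraph by these two citations; as written, that step is a plan rather than a proof.
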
 

\begin{proof} 
By abundance, $mK_X = \sO_X$ for some positive integer $m$, since $X$ is minimal. 
By passing to a cover which is \'etale in codimension 2 and applying Proposition \ref{prop4} 
we may assume $K_X = \sO_X.$ 
We claim that 
$$ q(X) > 0,$$
possibly after finite cover \'etale in codimension 2. \\
If $\det Q$ is not torsion, then $q(X) > 0$ right away. 
If the $\bQ-$Cartier divisor $\det Q$ is torsion, then, after a finite cover \'etale in codimension 2, we obtain a holomorphic form of 
degree $1$ or $2.$ To be more precise, choose $m$ such that $m \det Q$ is Cartier. Then choose  $m'$ such that $m'm \det Q = \sO_X.$ 
Then there exists a finite cover $h: \tilde X \to X$, \'etale in codimension 2, such that the pull-back $h^*(\det Q)$ is trivial.
In the sheaf-theoretic language, $h^*(\det Q)^{**} = \sO_X.$ Now pull back the above exact sequence and 
conclude the existence
of a holomorphic 1-form in case $Q$ has rank 1 and a holomorphic 2-form in case $Q$ has rank 2. \\
Since $\chi(X,\sO_X) \leq 0$ by \cite{Mi87}, we conclude $q(X) \ne 0.$\\
Hence we have a non-trivial Albanese map
$$ \alpha: X \to {\rm Alb}(X) =: A.$$ 
By \cite{Ka85}, sect. 8, $\alpha$ is surjective with connected fibers. Moreover, possibly after a finite \'etale base change,
$X$ is birational to $F \times A$ where $F$ is a general fiber of $\alpha.$ \\
Suppose first that $\dim \alpha(X) = 1,$ i.e., $q(X) = 1.$ Then $F$ must be a K3 surface (after another finite \'etale cover).
Now $X$ is birational to $F \times A$ via a sequence of flops \cite{Ko89} and therefore $X$ itself is smooth (\cite{Ko89}, 4.11). Hence
by the Beauville-Bogomolov decomposition theorem, $X$ itself is a product (up to finite \'etale cover).   \\
The case $ \dim \alpha(X) = 2$ cannot occur, since then $X$ is birational to a product of an elliptic curve and a torus,
so that $q(X) = 3.$ \\
If finally $\dim \alpha(X) = 3, $ then $X$ is a torus.

\end{proof} 

In the situation of Theorem \ref{threefolds}, it is also easy to see that the foliation $\sF$ is induced by a foliation
$\sF'$ on $X'$ in a natural way. Moreover $\sF'$ is trivial sheaf in case $X'$ is a torus and it is given by the relative
tangent sheaf of a projection in case $X'$ is a product. 
\vskip .2cm 
From a variety $X$ whose cotangent bundle is not generically ample, one can construct new examples by the following devices. 

\begin{proposition} Let $f:X \dasharrow X' $ be a birational map of normal $\bQ-$factorial varieties which is an isomorphism
in codimension 1. Then $\Omega^1_X$ is generically ample if and only if $\Omega^1_{X'}$ is generically ample. 
\end{proposition}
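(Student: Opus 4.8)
The plan is to exploit the fact that a birational map $f \colon X \dasharrow X'$ which is an isomorphism in codimension $1$ induces an isomorphism between the reflexive cotangent sheaves away from the (codimension $\geq 2$) locus where $f$ fails to be an isomorphism, and hence an identification of the relevant intersection-theoretic data. Concretely, let $U \subset X$ and $U' \subset X'$ be the maximal open sets on which $f$ restricts to an isomorphism $\varphi \colon U \xrightarrow{\ \sim\ } U'$; by hypothesis $\codim(X \setminus U) \geq 2$ and $\codim(X' \setminus U') \geq 2$. Then $\varphi^*(\Omega^1_{U'}) \cong \Omega^1_U$, and since $X$, $X'$ are normal, taking the reflexive hull over the complement recovers $\Omega^1_X$ and $\Omega^1_{X'}$ respectively (reflexive sheaves are determined by their restriction to an open set whose complement has codimension $\geq 2$). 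Moreover, pulling back and pushing forward via $f$ gives a bijection $\sS \leftrightarrow \sS'$ between reflexive subsheaves of $\Omega^1_X$ and of $\Omega^1_{X'}$ (again by extending across codimension $\geq 2$), and this bijection preserves the rank of the quotient.

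The second ingredient is to show that this correspondence preserves the property ``$\det Q \equiv 0$'' that, by Proposition~\ref{flatquotient} (and the discussion following it), characterizes the failure of generic ampleness for non-uniruled $X$. First I would observe that $X$ is uniruled if and only if $X'$ is, since uniruledness is a birational invariant. If $X$ (equivalently $X'$) is uniruled, one should note separately that in that case neither cotangent bundle is generically ample, because $K_X \cdot h < 0$ for a suitable polarization forces $\Omega^1_X$ to fail to be generically nef, let alone ample, and the same on $X'$; alternatively one reduces to the non-uniruled case which is the substantive one. So assume $X$ (hence $X'$) is not uniruled. If $\Omega^1_X$ is not generically ample, Proposition~\ref{flatquotient} yields a torsion-free quotient $\Omega^1_X \to Q \to 0$ of rank $1 \le p \le n$ with $\det Q \equiv 0$; restricting to $U$, transporting via $\varphi$, and taking the reflexive hull on $X'$ produces a quotient $\Omega^1_{X'} \to Q' \to 0$ of the same rank $p$. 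The key point is that $c_1(Q) \equiv 0$ on $X$ translates to $c_1(Q') \equiv 0$ on $X'$: the classes $c_1(\det Q)$ and $c_1(\det Q')$ agree after restriction to $U \cong U'$, and since removing a codimension $\geq 2$ subset does not change $N^1$ modulo the subspace spanned by exceptional (here: empty, as $f$ is iso in codimension $1$) divisors, numerical triviality is preserved. Hence $\det Q' \equiv 0$, and by Corollary~\ref{corstab} together with Proposition~\ref{flatquotient} (read in the converse direction: the existence of such a quotient on a non-uniruled variety obstructs generic ampleness), $\Omega^1_{X'}$ is not generically ample. By symmetry of the hypothesis in $X$ and $X'$, the converse implication follows identically.

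I would phrase the proof so that the only nontrivial analytic input is the identification $\Omega^1_X|_U \cong \varphi^*(\Omega^1_{X'}|_{U'})$ and the reflexivity/extension lemma that a reflexive sheaf on a normal variety is the pushforward of its restriction to any open subset with complement of codimension $\geq 2$; these let one move subsheaves and quotients, and their Chern class data, freely between $X$ and $X'$.

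The main obstacle I anticipate is bookkeeping around the numerical classes: one must be careful that ``$\det Q \equiv 0$ on $X$'' and ``$\det Q' \equiv 0$ on $X'$'' really are equivalent, since a priori the restriction map $N^1(X) \to N^1(U)$ need not be injective for a general open $U$. Here it is injective precisely because $f$ is an isomorphism in codimension $1$, so no divisor is contracted and $N^1(X) \cong N^1(X')$ compatibly with the identification on $U$; making this comparison clean — rather than merely restricting to a single MR-general curve, which would only give $c_1(Q) \cdot C = 0$ for one curve class — is the point that needs care. A secondary, more cosmetic, obstacle is the singular setting: one should make sure $\det Q$ and $\det Q'$ are $\mathbb Q$-Cartier, which is guaranteed by $\mathbb Q$-factoriality of $X$ and $X'$, exactly as in Corollary~\ref{CP1}.
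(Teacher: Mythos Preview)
Your approach is essentially the same as the paper's: both arguments use Proposition~\ref{flatquotient} to characterize failure of generic ampleness by the existence of a torsion-free quotient $Q$ with $\det Q \equiv 0$, transport such a quotient across the isomorphism-in-codimension-$1$ locus (using that reflexive sheaves and divisor classes extend uniquely over codimension $\geq 2$ on normal $\bQ$-factorial varieties), and then invoke the symmetry of the hypothesis in $X$ and $X'$. The paper's proof is much terser and leaves the reflexive-extension and $N^1$-identification details implicit where you spell them out carefully.

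One caution on your handling of the uniruled case: you assert that if $X$ is uniruled then $K_X \cdot h < 0$ for some ample class $h$, but this is precisely the open Problem~2.5 of the paper (the possible gap between $\overline{CI}(X)$ and $\overline{ME}(X)$, illustrated in Example~\ref{ex1}), so that step is not available. The paper sidesteps the issue by working throughout Section~4 in the non-uniruled setting and simply writing ``Since $X'$ is not uniruled'' without further justification; you should treat non-uniruledness as a standing hypothesis rather than attempt the uniruled case directly.
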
 

\begin{proof} 
 Suppose that $\Omega^1_X$ is generically ample and $\Omega^1_{X'}$ is not. Since $X'$ is not uniruled, $\Omega^1_{X'}$
is generically nef and by Proposition \ref{flatquotient} there is an exact sequence
$$ 0 \to \sS' \to \Omega^1_{X'} \to Q' \to 0 $$
such that $\det Q' \equiv 0.$ Since $f$ is an isomorphism in codimension 1, this sequence clearly induces a sequence
$$ 0 \to \sS \to \Omega^1_{X} \to Q \to 0 $$
such that $\det Q \equiv 0.$ 
Since the problem is symmetric in $X$ and $X'$, this ends the proof.
\end{proof}

\begin{proposition} \label{prop4} Let $f:X \to X'$ be a finite surjective map between normal projective $\bQ-$factorial varieties. Assume
that $f$ is \'etale in codimension 1.  Then $\Omega^1_X$ is generically ample if and only if $\Omega^1_{X'}$ is generically ample. 
\end{proposition}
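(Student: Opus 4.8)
The plan is to reduce the statement to the characterization of generic ampleness in terms of numerically trivial quotients of $\Omega^1$, given in Proposition \ref{flatquotient}, and then to transport such a quotient along the finite étale-in-codimension-1 map $f$ in both directions. Since $f$ is finite and surjective, $X$ is uniruled if and only if $X'$ is uniruled, so we may assume neither is uniruled; otherwise both cotangent sheaves are generically nef but (one checks) not generically ample, and there is nothing to prove, or rather the statement has to be read with the convention that uniruled varieties are excluded — in any case $X$ uniruled $\Leftrightarrow$ $X'$ uniruled handles that case symmetrically. So assume $X$, $X'$ not uniruled.

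First I would treat the direction ``$\Omega^1_{X'}$ generically ample $\Rightarrow$ $\Omega^1_X$ generically ample''. Suppose not: by Proposition \ref{flatquotient} there is a torsion-free quotient $\Omega^1_X \to Q \to 0$ with $\det Q \equiv 0$ and $1 \le \rk Q = p \le n$. Because $f$ is étale in codimension 1, the natural map $f^*\Omega^1_{X'} \to \Omega^1_X$ is an isomorphism in codimension 1 (the ramification divisor is empty in codimension 1 by hypothesis), hence induces an isomorphism of reflexive hulls $(f^*\Omega^1_{X'})^{**} \cong (\Omega^1_X)^{**}$ away from a codimension-2 set. I would then push the quotient $Q$ down: since generic ampleness is tested on MR-general curves $C$, which I may take to avoid the branch locus and to be such that $f$ restricted to $f^{-1}(C) \to C$ is étale, the pullback $f^*$ of an MR-general polarization on $X'$ is MR-general on $X$ (using that $f^*H_i$ is ample and Mehta--Ramanathan). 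A quotient $\Omega^1_X|_{f^{-1}(C)} \to Q|_{f^{-1}(C)}$ with numerically trivial determinant contradicts generic ampleness of $\Omega^1_{X'}$ on $C$: indeed $\Omega^1_{X'}|_C$ ample would force its pullback $\Omega^1_X|_{f^{-1}(C)} = f^*(\Omega^1_{X'}|_C)$ to be ample on the (possibly disconnected) curve $f^{-1}(C)$, which has no numerically flat quotient of positive rank. The cleanest way to phrase this: an ample bundle on a smooth projective curve stays ample under finite pullback, and ample bundles have no quotient of degree $\le 0$.

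For the converse direction, ``$\Omega^1_X$ generically ample $\Rightarrow$ $\Omega^1_{X'}$ generically ample'', I would again argue by contradiction, producing via Proposition \ref{flatquotient} a quotient $\Omega^1_{X'} \to Q' \to 0$ with $\det Q' \equiv 0$, and pull it back: $f^*(\Omega^1_{X'}) \to f^*Q' \to 0$ is still surjective, and since $f$ is étale in codimension 1 the map $f^*\Omega^1_{X'} \to \Omega^1_X$ is an isomorphism off a codimension-2 locus, so dividing out torsion gives a torsion-free quotient $\Omega^1_X \to Q \to 0$ on $X$ with $\rk Q = \rk Q'$; and $\det Q = f^*(\det Q')$ modulo a divisor supported on the codimension-2 exceptional locus, hence numerically trivial (pullback of a numerically trivial class under a finite map is numerically trivial, as intersection numbers multiply by $\deg f$). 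By Proposition \ref{flatquotient} again this contradicts generic ampleness of $\Omega^1_X$.

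The main obstacle is bookkeeping around codimension 2: making precise that ``étale in codimension 1'' lets one identify $\Omega^1_X$ with $f^*\Omega^1_{X'}$ in codimension 1, and that reflexive quotients and their determinants are then interchanged correctly, up to $\mathbb Q$-Cartier divisors supported in codimension $\ge 2$ which contribute nothing to numerical classes. One must also be a little careful that the MR-general curve $C$ on $X'$ can be chosen disjoint from the (codimension $\ge 2$) branch locus and from the loci where $Q$, $Q'$ fail to be locally free, so that restriction to $C$ commutes with pullback and the numerically-flat-quotient obstruction genuinely applies; this is where invoking Theorem \ref{MR} together with the freedom to move $C$ in a very ample linear system does the work. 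Everything else is formal once this dictionary is set up, the argument being symmetric in $X$ and $X'$.
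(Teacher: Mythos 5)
Your argument is correct and is essentially the paper's own proof: one direction lifts the numerically trivial quotient of $\Omega^1_{X'}$ along $f$ to conclude $\Omega^1_X$ is not generically ample, and the other pulls back the ample restriction $\Omega^1_{X'}\vert_C$ to $f^{-1}(C)$ and contradicts the degree-zero positive-rank quotient supplied by Proposition \ref{flatquotient}. The only inessential slip is the claim that $f^{-1}(C)$ is MR-general for the pulled-back polarization --- it need not be, but you never use this: ampleness of $f^*(\Omega^1_{X'}\vert_C)$ under the finite map together with the numerically trivial quotient $Q\vert_{f^{-1}(C)}$ already yields the contradiction.
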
 

\begin{proof}
If $X'$ is not uniruled and $\Omega^1_{X'}$ is not generically ample, we lift a sequence
$$ 0 \to \sS' \to \Omega^1_{X'} \to Q' \to 0 $$ 
with $\det Q' \equiv 0$ and conclude that $\Omega_X^1$ is not generically ample. \\
Suppose now that $\Omega_X^1$ is not generically ample (and $X$ not uniruled). Then we obtain a sequence
$$ 0 \to \sS \to \Omega^1_{X} \to Q \to 0 $$
with $\det Q \equiv 0.$ 
If $\Omega^1_{X'} $ would be generically ample, then for a general complete intersection curve $C' \subset X'$
the bundle $\Omega^1_{X'} \vert C'$ is ample. Hence $\Omega_X^1 \vert f^{-1}(C') = f^*(\Omega^1_{X'} \vert C')$ is
ample, a contradiction. 
\end{proof} 

In view of the minimal model program we are reduced to consider birational morphisms which are ``divisorial'' in the sense that their exceptional locus contains a divisor. 
In one direction, the situation is neat:

\begin{proposition} Let $\pi: \hat X \to X$ be a birational map of normal $\mathbb Q-$factorial varieties. 
If $\Omega^1_X$ is generically ample, so does $\Omega^1_{\hat X}. $ 
\end{proposition}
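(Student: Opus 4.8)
The plan is to argue by contrapositive, following the same template used in the previous two propositions. So suppose $\Omega^1_{\hat X}$ is \emph{not} generically ample. Since $\pi$ is birational and $X$ is $\mathbb Q$-factorial, $\hat X$ is also not uniruled (uniruledness is a birational invariant). Hence $\Omega^1_{\hat X}$ is generically nef by Theorem~\ref{miy}, and Proposition~\ref{flatquotient} (applied on $\hat X$, which we may assume is normal $\mathbb Q$-factorial — if $\hat X$ is not normal, pass to the normalization, or simply note that in the intended application $\hat X$ is a manifold) produces a torsion free quotient
$$ \Omega^1_{\hat X} \to \hat Q \to 0 $$
with $\det \hat Q \equiv 0$ and $\rk \hat Q < \dim \hat X$ (or $K_{\hat X}\equiv 0$, which forces $K_X\equiv 0$ and then $\Omega^1_X$ cannot be generically ample either, contradiction). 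The goal is to push $\hat Q$ forward along $\pi$ to obtain a quotient of $\Omega^1_X$ with numerically trivial determinant, which by the corollary after Proposition~\ref{flatquotient} (or directly by Proposition~\ref{flatquotient} read backwards) contradicts generic ampleness of $\Omega^1_X$.

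First I would dualize: from the quotient we get a saturated subsheaf $\hat{\sF}\subset T_{\hat X}$ with $\det \hat{\sF}\equiv 0$. Since $\pi$ is birational, over the open set $U\subset X$ where $\pi$ is an isomorphism we may identify $T_{\hat X}$ and $T_X$, so $\hat{\sF}|_{\pi^{-1}(U)}$ descends to a subsheaf $\sF_U\subset T_X|_U$. Now take $\sF := j_*(\sF_U)$, the direct image under the open immersion $j: U\hookrightarrow X$; since $X$ is normal and $\codim(X\setminus U)\ge 2$ (this uses $\mathbb Q$-factoriality — the exceptional locus of a birational morphism of $\mathbb Q$-factorial varieties has no divisorial image... wait, that is false: $\pi$ itself may be divisorial, but the \emph{image} $\pi(\mathrm{Exc}(\pi))$ can still be small). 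Here is the subtlety: $X\setminus U = \pi(\mathrm{Exc}(\pi))$, and because $X$ is $\mathbb Q$-factorial and $\pi$ contracts divisors, this image has codimension $\ge 2$ in $X$. Hence $\sF := j_*(\sF_U)$ is a reflexive subsheaf of $T_X$ (which is itself reflexive on the normal variety $X$), and $Q := (T_X/\sF)^{**}$, or rather the torsion-free quotient $\Omega^1_X \to Q$ obtained by dualizing $\sF\hookrightarrow T_X$, has $\det Q$ agreeing with $\det \hat Q$ on $\pi^{-1}(U)$ under the identification, hence $\det Q|_U \equiv 0$, and since $\det Q$ is $\mathbb Q$-Cartier and two $\mathbb Q$-Cartier divisors agreeing outside codimension $2$ on a normal variety agree, $\det Q \equiv 0$ on all of $X$. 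Moreover $\rk Q = \rk \hat Q < \dim X$, so by Proposition~\ref{flatquotient} $\Omega^1_X$ is not generically ample — the desired contradiction.

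The main obstacle, and the step I would spend the most care on, is the bookkeeping of the numerical class of $\det Q$ across the birational morphism: one must be sure that no discrepancy-type correction terms $\sum a_i E_i$ survive. The point is that unlike in the proof of Corollary~\ref{CP1}, where one passes from $X$ \emph{up} to a desingularization and corrections by exceptional divisors genuinely appear, here we are pushing a quotient \emph{down} along $\pi:\hat X\to X$, and the exceptional divisors $E_i$ of $\pi$ are \emph{not} divisors on $X$ at all — their image is codimension $\ge 2$ — so there is simply nothing to correct by: $\det Q$ is the unique reflexive extension of $\det(\hat Q)|_{\pi^{-1}(U)}$ across a codimension $\ge 2$ locus, and it inherits the numerical triviality. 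I would therefore make the codimension statement $\codim_X \pi(\mathrm{Exc}(\pi))\ge 2$ explicit (it follows from $\mathbb Q$-factoriality of $X$: a divisor contracted by $\pi$ cannot have divisorial image, else $\pi$ would be an isomorphism in codimension one near that divisor) and then invoke that a $\mathbb Q$-Cartier divisor on a normal variety is determined by its restriction to the complement of a codimension $\ge 2$ set. The rest is the standard dualize/extend/apply-Proposition~\ref{flatquotient} mechanism already rehearsed twice above.
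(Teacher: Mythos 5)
Your argument is correct and is essentially the paper's proof: both run the contrapositive and transport the numerically trivial quotient across the locus where $\pi$ is an isomorphism, whose complement in $X$ has codimension at least two (the paper pushes the quotient forward directly via $\pi_*$, while you dualize to a subsheaf of $T_{\hat X}$ and extend with $j_*$ --- a cosmetic difference). One small correction: the fact that $\pi(\mathrm{Exc}(\pi))$ has codimension $\geq 2$ in $X$ follows from normality of $X$ together with Zariski's main theorem (the inverse rational map is defined outside a codimension-two set), not from $\mathbb Q$-factoriality, which is needed only to make $\det Q$ a $\mathbb Q$-Cartier divisor so that numerical triviality makes sense and is determined outside codimension two.
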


\begin{proof}
If $\Omega_X^1$ would not be generically ample, we obtain an epimorphism
\begin{equation}  \Omega^1_{\hat X} \to \hat Q \to 0 \end{equation}
with a torsion free sheaf $\hat Q$ such that $\det \hat Q \equiv 0.$ 
Applying $\pi_*$ yields a map
$$ \mu: \pi_*(\Omega^1_{\hat X}) \to \pi_*(\hat Q), $$
which is an epimorphism in codimension 1. Since $\Omega^1_X = \pi_*(\Omega^1_{\hat X})$ outside a set of codimension at least 2, 
there exists a torsion free sheaf $Q$ coinciding with $\pi_*(\hat Q)$ outside a set of codimension at least 2 together
with an epimorphism 
$$ \Omega^1_X \to Q \to 0. $$
Since $\det Q = \det \pi_*(\hat Q) \equiv 0,$
the sheaf $\Omega^1_X$ cannot be generically ample. 
\qed
\end{proof} 

From a birational point of view, it remains to  investigate the following situation. Let $\pi: \hat X \to X$ be a divisorial 
contraction of non-uniruled terminal
varieties
and suppose that $\Omega^1_X$ is not generically ample. Under which conditions is $\Omega^1_{\hat X} $ generically ample?
Generic ampleness is not for free as shown in the following easy

\begin{example} Let $E$ be an elliptic curve and $S$ an abelian surface, say. Let $\hat S \to S$ be the blow-up at $p \in S$
and set $\hat X = E \times \hat S.$ Then $\hat X $ is the blow-up of $X = E \times S$ along the curve $E \times \{p\}$. 
Since $\Omega^1_{\hat X} = \sO_{\hat X} \oplus p_2^*(\Omega^1_{\hat S}),$ it cannot be generically ample
\end{example} 

We now study a special case of a point modification: the blow-up of a smooth point. 

\begin{proposition} Let $X$ be a non-uniruled $n-$dimensional projective manifold, $\pi: \hat X \to X$ the blow-up at the point $p.$ 
If $\Omega^1_{\hat X}$ is not generically ample, then there exists a number $q < n,$ a numerically trivial line bundle $L$ and
a non-zero section $v \in H^0(X,\bigwedge^qT_X \otimes L)$ vanishing at $p$: $v(p) = 0.$  
\end{proposition}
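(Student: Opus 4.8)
The plan is to mimic the proof of Corollary \ref{cor6}, but keep track of the vanishing at $p$. First I would apply Proposition \ref{flatquotient} to $\hat X$ (which is not uniruled, being birational to the non-uniruled $X$): since $\Omega^1_{\hat X}$ is not generically ample, either $K_{\hat X}\equiv 0$ or there is an exact sequence
\[
0 \to \hat\sS \to \Omega^1_{\hat X} \to \hat Q \to 0
\]
with $\hat Q$ torsion free of rank $q<n$ and $\det\hat Q\equiv 0$. The case $K_{\hat X}\equiv 0$ can be excluded, because $K_{\hat X}=\pi^*K_X+(n-1)E$ with $E$ the exceptional divisor, so $K_{\hat X}\cdot\ell=n-1>0$ for a line $\ell\subset E\cong\bP^{n-1}$, contradicting numerical triviality. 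So we are in the second case; taking $\bigwedge^q$ of the quotient map gives a nonzero section
\[
\hat v \in H^0\bigl(\hat X,\textstyle\bigwedge^q T_{\hat X}\otimes \hat L\bigr), \qquad \hat L := (\det\hat Q)^*\equiv 0 .
\]

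Next I would push $\hat v$ down to $X$. Since $\pi$ is the blow-up of a smooth point, $\pi_*(\bigwedge^q T_{\hat X}) \hookrightarrow \bigwedge^q T_X$ is an isomorphism in codimension $1$ (both sheaves are reflexive, agreeing away from $p$), and $\pi_*\sO_{\hat X}=\sO_X$, so $\hat L$ descends to a numerically trivial line bundle $L$ on $X$ with $\pi^*L=\hat L$ (using that $\Pic(\hat X)=\pi^*\Pic(X)\oplus\bZ E$ and numerical triviality forces the $E$-coefficient to vanish). Hence $\pi_*\hat v$ is a nonzero section $v\in H^0(X,\bigwedge^q T_X\otimes L)$. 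It remains to see $v(p)=0$. For this I would use that $\pi_*T_{\hat X}=\sI_p\otimes T_X$ (the vector-field version of the blow-up formula, exactly as invoked in the surface case earlier), whence for the $q$-th exterior powers one still has that the image of $\pi_*(\bigwedge^q T_{\hat X})$ inside $\bigwedge^q T_X$ lies in the subsheaf of sections vanishing at $p$; more precisely, the natural map $\bigwedge^q T_{\hat X}\to \pi^*\bigwedge^q T_X$ factors through $\pi^*(\bigwedge^q T_X)\otimes\sO(-E)$ when restricted along $E$, because $T_{\hat X}|_E$ surjects onto $\pi^*T_X|_E$ only after twisting by $\sO(-E)$ in the normal direction. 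Tracing this through, $v$ extends to a section of $\bigwedge^q T_X\otimes L\otimes\sI_p$, i.e. $v(p)=0$.

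The main obstacle I expect is making the last step — the vanishing $v(p)=0$ — precise for $q\ge 2$: for $q=1$ it is literally $\pi_*T_{\hat X}=\sI_p\otimes T_X$, but for higher $q$ one must check that the exterior power of the bundle map $T_{\hat X}\to\pi^*T_X$ still drops rank (in the appropriate sheaf-theoretic sense) along $E$ so that $\pi_*(\bigwedge^q T_{\hat X})=\sI_p\otimes\bigwedge^q T_X$, or at least lands in $\sI_p\otimes\bigwedge^q T_X$. This is a local computation near $p$ in blow-up coordinates: writing the Euler-type sequence $0\to T_{\hat X/X}\to T_{\hat X}\to\pi^*T_X$ with $T_{\hat X/X}$ supported on $E$, one sees that the generic rank of $T_{\hat X}\to\pi^*T_X$ is $n$ but the cokernel is $\pi^*T_{X,p}\otimes\sO_E(1)$-ish, forcing every local section of $\bigwedge^q T_{\hat X}$ to map into $\bigwedge^q T_X\otimes\sI_p$ near $p$. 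I would carry this out in one explicit coordinate chart, which suffices by $\Aut$-symmetry of the fiber $E\cong\bP^{n-1}$. Everything else (non-uniruledness of $\hat X$, the exclusion $K_{\hat X}\not\equiv 0$, descent of $L$) is routine given the results already in the paper.
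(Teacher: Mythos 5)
Your proposal is correct and follows essentially the same route as the paper: apply the earlier structural results to the non-uniruled blow-up $\hat X$, descend the numerically trivial line bundle and the section via $\pi_*$, and deduce $v(p)=0$ from the comparison of $\bigwedge^q T_{\hat X}$ with $\pi^*\bigwedge^q T_X$ along $E$ (the paper uses the sequence $0\to\bigwedge^q T_{\hat X}\to\pi^*(\bigwedge^q T_X)\to\bigwedge^q(T_E(-1))\to 0$ and concludes $\pi_*(\bigwedge^q T_{\hat X})=\sI_p\otimes\bigwedge^q T_X$). Only a cosmetic slip: since $E\cdot\ell=-1$ for a line $\ell\subset E$, one has $K_{\hat X}\cdot\ell=-(n-1)<0$ rather than $n-1>0$, but either way $K_{\hat X}\not\equiv 0$, which is all you need.
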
 

\begin{proof} 
By Corollary \ref{cor6}, we get a non-zero section $\hat v \in H^0(\hat X, \bigwedge^qT_{\hat X} \otimes \hat L)$ for some numerically
trivial line bundle $\hat L.$ Notice that $\hat L = \pi^*(L)$ for some numerically trivial line bundle $L$ on $X$.  
Since 
$$ \pi_*(\bigwedge{^q}T_{\hat X}) \subset \bigwedge{^q}T_X,$$
we obtain a section $v \in H^0(X,\bigwedge^qT_X \otimes L).$
It remains to show that $v(p) = 0.$ This follows easily by taking $\pi_*$ of the exact sequence
$$ 0 \to \bigwedge{^q}T_{\hat X} \to \pi^*(\bigwedge{^q}T_X) \to \bigwedge{^q}(T_E(-1)) \to 0.$$
Here $E$ is the exceptional divisor of $\pi.$ In fact, taking $\pi_*$ gives 
$$ \pi_*(\bigwedge{^q}T_{\hat X}) = \sI_p \otimes T_X.$$

\end{proof}

From the Beauville-Bogomolov decomposition of projective manifolds $X$ with $c_1(X) = 0,$ we deduce immediately

\begin{corollary} Let $\hat X$ be the blow-up at a point $p$ in a projective manifold $X$ with $c_1(X) = 0$. Then
$\Omega^1_{\hat X}$ is generically ample. 
\end{corollary}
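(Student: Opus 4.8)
The plan is to argue by contradiction via the previous proposition, which says that if $\Omega^1_{\hat X}$ fails to be generically ample then there is a number $q<n$, a numerically trivial line bundle $L$ on $X$, and a non-zero section $v\in H^0(X,\bigwedge^qT_X\otimes L)$ with $v(p)=0$. The point is that on a manifold with $c_1(X)=0$ the Beauville--Bogomolov decomposition theorem forces every such twisted multivector field to be nowhere vanishing, which contradicts $v(p)=0$.

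Concretely, I would first reduce to the case where $X$ is already a product $A\times\prod Y_i$ of an abelian variety with simply connected Calabi--Yau and irreducible holomorphic symplectic factors. Passing to the finite \'etale cover $\tilde X\to X$ given by Beauville--Bogomolov changes nothing: the pullback of $L$ is still numerically trivial (and in fact torsion after a further cover, since $\mathrm{Pic}^0$ of a Calabi--Yau or hyperk\"ahler factor is trivial, so only the abelian factor contributes), and a vanishing $v(p)=0$ pulls back to vanishing at the points of the fiber. So it suffices to prove: on $X=A\times Z$ with $Z$ simply connected with $c_1(Z)=0$ and $L$ numerically trivial, every section of $\bigwedge^q T_X\otimes L$ with $q<\dim X$ is either zero or nowhere vanishing.

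For this last step the key input is that $T_X=\mathcal O_A^{\,a}\oplus p_Z^*T_Z$ splits, and on each Calabi--Yau or symplectic factor the only global holomorphic multivector fields are: none at all on a strict Calabi--Yau factor (since $H^0(Y_i,\bigwedge^jT_{Y_i})=H^0(Y_i,\Omega^{n_i-j}_{Y_i})=0$ for $0<j<n_i$, and $=\mathbb C$ only for $j=0,n_i$, the top one being the dual of the holomorphic volume form, hence nowhere zero), and on a symplectic factor the algebra generated by powers of the symplectic form $\sigma^{-1}\in H^0(\bigwedge^2 T)$, each of which is nowhere degenerate. Decomposing $\bigwedge^q T_X\otimes L$ according to the product and using the K\"unneth formula, any global section is a sum of exterior products of: constant multivector fields on $A$ twisted by the numerically trivial $L$ (which restricted to $A$ is a degree-zero line bundle, so has sections only if trivial, and then the section is translation-invariant, hence nowhere zero), and powers of the symplectic forms on the $Y_i$. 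Because $q<n$, at least one factor is ``unused'', but in every factor the only available sections are nowhere vanishing, so their exterior product is nowhere vanishing too; in particular it cannot vanish at $p$.

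The main obstacle I anticipate is the bookkeeping in this K\"unneth decomposition: one must check that a section of $\bigwedge^q T_X\otimes L$ cannot be a ``mixed'' sum whose individual terms vanish on overlapping loci in such a way that the sum vanishes at $p$ while no single term does. This is handled by noting that the decomposition $T_X=\bigoplus(\text{factors})$ induces a direct sum decomposition $\bigwedge^q T_X=\bigoplus_{|\underline{j}|=q}\bigotimes_i \bigwedge^{j_i}T_{(\text{factor }i)}$ of vector bundles (not just of cohomology), so the vanishing locus of a section is the intersection of the vanishing loci of its components; since each nonzero component is nowhere vanishing (being, on each factor, a nonzero multiple of a power of $\sigma_i$ or a constant field on $A$ times a trivialization of $L|_A$), any nonzero section is nowhere vanishing, contradicting $v(p)=0$. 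Hence $\Omega^1_{\hat X}$ is generically ample.
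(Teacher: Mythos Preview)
Your proposal is correct and follows essentially the same route as the paper, which simply says the corollary is deduced ``immediately'' from the Beauville--Bogomolov decomposition combined with the preceding proposition; you have filled in the details the paper leaves implicit. One small remark: your aside that ``because $q<n$, at least one factor is unused'' is not actually needed---the argument that each nonzero summand in the K\"unneth decomposition is nowhere vanishing works for all $q$, since on every factor the relevant $H^0(\bigwedge^j T)$ is either zero or one-dimensional with a nowhere-vanishing generator (constant multivectors on $A$, powers of $\sigma^{-1}$ on hyperk\"ahler factors, the trivial section or the dual volume form on strict Calabi--Yau factors). The key observation you correctly isolate is that for a direct sum $E=\bigoplus E_\alpha$, a section vanishes at a point iff every component does, so a single nowhere-vanishing component already forces $v(p)\neq 0$.
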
 

Due to Conjecture \ref{c2} below this corollary should generalize to all non-uniruled manifolds $X.$ 
Based on the results presented here, one might formulate the following 

\begin{conjecture} \label{c1} {\it Let $X$ be a non-uniruled terminal $n-$fold. Suppose that $\Omega^1_X$ is not generically ample 
and $K_X \not \equiv 0$. Then, up to taking finite covers $X' \to X$, \'etale in codimension 1, and birational maps $X' \dasharrow X''$,which are 
biholomorphic in codimension 1, $X$ admits a locally trivial fibration, given by a
numerically trivial foliation, which is trivialized after another finite cover, \'etale in codimension 1.}
\end{conjecture}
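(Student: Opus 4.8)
The argument below is a program: at two points it relies on statements --- the abundance conjecture, and the structure theory of foliations with numerically trivial canonical class --- that are not available in full generality. The starting point is Proposition~\ref{flatquotient}. Since $\Omega^1_X$ is not generically ample and $K_X\not\equiv 0$, it produces a torsion free quotient $\Omega^1_X\to Q\to 0$ with $1\le\rk Q=:p<n$ and $\det Q\equiv 0$, and dually an exact sequence $0\to\sF\to T_X\to T_X/\sF\to 0$ with $\det\sF\equiv 0$ in which, as recalled in the text, $\sF$ is a (possibly singular) numerically trivial foliation. Because $\sF$ is cut out by the maximal ample subsheaf of $\Omega^1_X|_C$ on an MR-general curve $C$, the restriction $\sF|_C$ is numerically flat for every polarization; equivalently $\sF$ is $h$-semistable with $c_1(\sF)\cdot h=0$ for all $h$. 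The whole problem reduces to the geometry of this distinguished foliation: one wants to show that, after the allowed surgeries, $\sF$ is the relative tangent sheaf of a locally trivial fibration with abelian fibers.

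The decisive --- and, I expect, genuinely hard --- step is to prove that $\sF$ has algebraic leaves. The usual algebraic integrability criteria (Bogomolov--McQuillan, Kebekus, Sol\'a Conde and Toma) do not apply directly, as they require $\sF$ to be positive along a general curve, whereas here $\sF|_C$ is merely numerically flat; one needs the borderline ``numerically trivial'' case, in which algebraicity should go hand in hand with the leaves being torus-like. One route is to exploit that $T_X/\sF$ is a torsion free quotient of $T_X$ with $\det(T_X/\sF)\equiv 0$ which is moreover negative on MR-general curves, and to combine this with the pseudo-effectivity behind Theorem~\ref{CP} and Corollary~\ref{CP1} so as to force the tangent sequence to split along general complete intersection surfaces, bootstrapping algebraicity from such a partial splitting. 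A second route, conditional on abundance, passes to a minimal model and thence to the Iitaka fibration --- one expects $0<\kappa(X)<n$ here, the upper bound being forced by Theorem~\ref{genample} --- and shows that $\sF$ cuts out a sub-fibration whose fibers have numerically trivial canonical class; in this route one must also argue that the structure obtained descends to $X$ itself modulo the surgeries allowed in the statement, which is delicate because divisorial contractions are not biholomorphic in codimension one. Either way one ultimately invokes the structure theory of foliations with numerically trivial canonical class; in the codimension-one case $p=n-1$ this is already available and should be settled first.

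Granting algebraicity, the remaining steps parallel the surface and threefold analyses already carried out (compare Theorem~\ref{threefolds}), although one must be careful with the singularities. The closures of the general leaves form an algebraic family, and passing to a suitable $\mathbb Q$-factorial model $X''$ with $X\dasharrow X''$ biholomorphic in codimension one --- which is exactly where this kind of modification enters --- one obtains a fibration $f\colon X''\to Z$ with $\sF=T_{X''/Z}$; here one checks that no divisorial extraction is forced, using that $\sF$ is a saturated reflexive foliation. From $\det\sF\equiv 0$ one gets $K_{X''/Z}\equiv 0$, hence the general fiber $F$ satisfies $K_F\equiv 0$. A relative Beauville--Bogomolov decomposition together with boundedness of fibers with trivial canonical class, followed by a finite base change \'etale in codimension one to kill the monodromy, turns $f$ into a fiber bundle, i.e.\ a locally trivial fibration. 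Finally, trivializing it amounts to showing that $F$ is an abelian variety: this is exactly what the structure theory of algebraically integrable numerically trivial foliations delivers, and once $\sF|_F=T_F$ is trivial a Lieberman-type argument --- a further finite \'etale cover killing the translations on the abelian fibers --- yields a product $X'''\simeq F\times Z'$. The only serious obstacle is the algebraic integrability step; everything downstream is a relative version of arguments already present in the paper.
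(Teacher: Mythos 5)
The statement you were asked about is Conjecture~\ref{c1} of the paper: it is posed as an open problem, and the paper contains no proof of it, only the surrounding heuristic (the existence of the numerically trivial foliation from Proposition~\ref{flatquotient}, the low-dimensional evidence of Theorem~\ref{threefolds}, and the remark that a minimal model program and a study of divisorial contractions would be required). So there is nothing to compare your argument against; what you have written is, as you yourself say, a program rather than a proof, and it cannot be accepted as a proof of the statement. That said, your program is consistent with what the paper envisages, and you have correctly isolated the genuine obstruction: the algebraic integrability of the numerically trivial foliation $\sF$. The Bogomolov--McQuillan/Kebekus--Sol\'a Conde--Toma criteria need strict positivity of $\sF$ on a movable curve, while here $\sF\vert C$ is only numerically flat, so none of the known integrability results apply; this borderline case is exactly why the statement is a conjecture. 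Your two suggested routes (splitting the tangent sequence on general complete intersection surfaces, or passing through abundance and the Iitaka fibration) are plausible lines of attack but neither is carried out, and the second additionally assumes abundance.

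One concrete misstep in your endgame: you assert that trivializing the fibration ``amounts to showing that $F$ is an abelian variety'' and propose a Lieberman-type argument killing translations on abelian fibers. This is not what the conjecture asks for and is contradicted by the paper's own examples: in Theorem~\ref{threefolds} the relevant covers are products $A\times S$ with $S$ a K3 surface, and the rank-two numerically trivial foliation $p_S^*T_S$ has K3 leaves; likewise Conjecture~\ref{c3} only predicts a factor $Y$ with \emph{trivial canonical bundle}, not a torus. So the expected structure theory for algebraically integrable foliations with $\det\sF\equiv 0$ should produce fibers in the Beauville--Bogomolov class (abelian, K3/hyperk\"ahler, Calabi--Yau), and the finite cover trivializing the bundle must come from finiteness of the relevant monodromy for such fibers, not from translations on abelian varieties. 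A smaller overstatement: the numerical flatness of $\sF\vert C$ is established for the particular polarization $h$ for which generic ampleness fails (and $\sS$ is destabilizing for all classes in $\sME(X)\setminus\{0\}$), but the paper does not claim $\sF\vert C$ is numerically flat for \emph{every} polarization.
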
 

More generally, any numerical trivial foliation should yield the same conclusion. 

\vskip .2cm 
This might require a minimal model program, a study of minimal models in higher dimensions and possibly also a study of 
the divisorial Mori
contractions. In a subsequent paper we plan to study minimal threefolds $X$ with $\kappa (X) = 1,2$ whose cotangent bundles
is not generically ample and then study the 
transition from a general threefold to a minimal model.

\vskip .2cm 
We saw that a non-uniruled manifold $X$ whose cotangent bundle is not generically ample, admits a section $v$ in some bundle
$\bigwedge{^q} T_X \otimes L$, where $L$ is numerically trivial. It is very plausible that $v$
cannot have zeroes:

\begin{conjecture} \label{c2}{\it Let $X$ be a  projective manifold. Let $v \in H^0(X,\bigwedge^qT_X \otimes L)$ be a non-trivial section for some numerically trivial line bundle $L.$ If $v$ has a zero, then $X$ is uniruled. }
\end{conjecture}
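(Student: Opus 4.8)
The plan is to reduce Conjecture \ref{c2} to the Rosenlicht--Lieberman circle of ideas via the contraction $v$ defines on differential forms, combined with Miyaoka's generic nefness and Miyaoka--Mori-type bend-and-break. First I would dispose of the divisor-level obstruction: if $X$ were \emph{not} uniruled, then $\Omega^1_X$ is generically nef by Theorem \ref{miy}, so by Lemma \ref{lemmasections}(2) the section $v$, viewed as an element of $H^0(X,(\Omega^1_X{}^*)^{\otimes q}\otimes L)$ after the natural inclusion $\bigwedge^q T_X\hookrightarrow T_X^{\otimes q}$, has no zeros in codimension $1$; hence its zero locus $Z=\{v=0\}$ has codimension $\ge 2$. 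Contracting $v$ against $q$-forms gives a nonzero map $\Omega^q_X\to L$, i.e. a map $\bigwedge^q\Omega^1_X\to L$, generically surjective, with kernel a subsheaf $\sS\subset\bigwedge^q\Omega^1_X$; dually one gets a corank-$1$ (in $\bigwedge^q$) picture. The cleaner way to package this is: $v$ induces, over $X\setminus Z$, a rank-$q$ subbundle $\sF\subset T_X$ (the ``span'' of $v$, i.e. the image of $\bigwedge^{q-1}\Omega^1_X\to T_X$ obtained by contracting $q-1$ of the slots), with $\det\sF\cong L^*$ numerically trivial, extended to a reflexive subsheaf of $T_X$ on all of $X$.

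Next I would show $\sF$ is a foliation and that the problem localizes to a lower-dimensional fibration. The integrability of $\sF$ is not automatic from $v$ alone, so here one either assumes $q=1$ first (where $\sF$ is a line field and the statement is essentially Lieberman/Rosenlicht: a vector field twisted by a numerically trivial $L$ with a zero — pass to the ample model / use that $L\cdot h<0$ is impossible but a zero forces positivity, cf. the surface Theorem in the excerpt), or one argues that the \emph{saturation} of $\sF$ is what one should work with and that the relevant Bogomolov-type vanishing (Bogomolov--Castelnuovo--de Franchis, as already used for surfaces via \cite{Bo79}) forces the leaves to be algebraic when $\det\sF\equiv 0$. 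With algebraic leaves one gets a fibration $f\colon X'\dashrightarrow Y$ after a finite étale cover and birational modification (as in Conjecture \ref{c1}), and $v$ becomes a relative object: a section of $\bigwedge^q T_{X'/Y}\otimes L$ with $\bigwedge^q T_{X'/Y}=K_{X'/Y}^{-1}$ up to the foliation being the relative tangent sheaf. Since $K_{X/Y}$ (resp. $K_{X'/Y}$) is pseudo-effective by Corollary \ref{cor4}, the identity $K_{X'/Y}^{-1}\otimes L\supset\sO$ (a section) forces $K_{X'/Y}\equiv 0$ and the section to be nowhere zero on the fibers — contradicting $v(z)=0$ for $z\in Z$ \emph{unless} $Z$ meets every fiber in a way that is excluded, i.e. unless the zero is pushed into the uniruled locus. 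The sharper statement one actually wants: a zero of $v$ produces, by a bend-and-break/Miyaoka--Mori deformation argument applied to the (multi-)foliation or to the curves through $Z$, a rational curve through that zero, hence uniruledness.

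Concretely, here is the route I would try to make rigorous. Step 1: assume $X$ not uniruled and derive the reflexive subsheaf $\sF\subset T_X$ with $\det\sF\equiv 0$ and $\operatorname{rk}\sF=q$, together with the fact (Lemma \ref{lemmasections}(2)) that the zero locus $Z$ of $v$ has codimension $\ge 2$. Step 2: show $\sF$, or a foliation built from it, is algebraically integrable — for $q=1$ this is Lieberman \cite{Li78}; in general invoke the circle of results around Corollaries \ref{CP1}, \ref{cor6} and the structure theory (the conjectural Conjecture \ref{c1} is exactly this, so for an unconditional proof one restricts to cases where integrability and algebraicity are known, e.g. $\det\sF$ torsion and $\kappa$ small, matching the threefold Theorem \ref{threefolds}). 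Step 3: on the base change/model $X'$, use Corollary \ref{cor4} ($K_{X'/Y}$ pseudo-effective) against the section to conclude $K_{X'/Y}\equiv 0$, so the fibers $F$ have $c_1(F)=0$; by Beauville--Bogomolov $F$ is (up to finite étale cover) a product of a torus, Calabi--Yau and hyperkähler factors, on which $\bigwedge^q T_F$ has sections without zeros (the torus/holomorphic-symplectic part) — so $v|_F$ vanishing somewhere is impossible, forcing $Z=\emptyset$; but $Z\neq\emptyset$ by hypothesis, contradiction. The main obstacle is unambiguously Step 2: proving that the foliation attached to $v$ has algebraic leaves (equivalently, establishing enough of Conjecture \ref{c1}) — without it the argument only yields Conjecture \ref{c2} in the same low-dimensional or special-Kodaira-dimension cases where the structure theory of numerically trivial foliations is already available, which is presumably why it is stated as a conjecture rather than a theorem.
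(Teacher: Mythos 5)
This statement is a \emph{conjecture} in the paper, and the paper does not prove it: it only observes that the case $q=\dim X$ follows from \cite{MM86}, that $q=1$ with $L$ trivial is classical, and that the general case would follow from Conjecture \ref{c4} (sufficient nefness of $\Omega^1_X$ for non-uniruled $X$) via Proposition \ref{easy} --- pick a covering family of curves through the zero of $v$ on which $\Omega^1_X$ is nef; then $v$ restricted to such a curve is a nonzero section of the dual of a nef bundle (twisted by a numerically trivial line bundle) with a zero, which is impossible. Your proposal does not prove the conjecture either, and you say so yourself; but it is worth recording precisely where it breaks, because the gaps are not only in your Step 2.

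Step 1 already has a problem: a section of $\bigwedge^q T_X\otimes L$ need not be locally decomposable, so its ``span'' (the image of the contraction $\bigwedge^{q-1}\Omega^1_X\to T_X\otimes(\text{twist})$) need not be a rank-$q$ distribution with determinant $L^*$. For $q=2$, $n=4$ and $v=e_1\wedge e_2+e_3\wedge e_4$ the span has rank $4$, not $2$, and no numerically trivial determinant is produced. The foliation-with-trivial-determinant picture in the paper (Proposition \ref{flatquotient}, Corollary \ref{cor6}) runs in the \emph{opposite} direction: there the section arises \emph{as} the determinant of a destabilizing subsheaf and is decomposable by construction; an arbitrary $v$ as in Conjecture \ref{c2} carries no such structure. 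Step 2 (algebraic integrability of the resulting foliation) is, as you note, essentially Conjecture \ref{c1} and is open. Step 3 also fails as stated: Lemma \ref{lemmasections}(2) only gives $\operatorname{codim} Z\ge 2$, so $Z$ may well be contained in finitely many fibers of the hypothetical fibration; concluding that $v$ restricted to a \emph{general} fiber is nowhere zero then yields no contradiction with $Z\ne\emptyset$, and it is exactly at the special fibers through $Z$ that one would need bend-and-break to produce a rational curve --- which is the whole content of the conjecture. The paper's reduction via Proposition \ref{easy} is the cleaner way to isolate the one missing ingredient (nef restriction of $\Omega^1_X$ to some covering family through the given point), and it avoids decomposability, integrability and the fiber-dimension issues entirely.
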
 

If $q = \dim X,$ then the assertion is clear by \cite{MM86}. If $ q = 1$ and $L$ is trivial, then the conjecture is a classical result, see e.g. \cite{Li78}. We will come back to Conjecture \ref{c2} at the end of the next section. 

\vskip .2cm 
A well-known, already mentioned theorem of Lieberman \cite{Li78} says that if a vector field
$v$ has no zeroes, then some finite \'etale cover $\tilde X$ of $X$ has the form $\tilde X = T \times Y$ with $T$ a torus, and $v$ comes from the torus. One might hope that this is simply a special case of a 
more general situation:

\begin{conjecture} \label{c3} {\it Let $X$ be a projective manifold, $L$ a numerically trivial line bundle and 
$$ v \in H^0(X,\bigwedge{^q} T_X \otimes L) $$
a non-zero section, where $q < \dim X.$ Then $X$ admits a finite \'etale cover $\tilde X \to X$ such that 
$\tilde X \simeq Y \times Z$ where $Y$ is a projective manifold with trivial canonical bundle and $v$ is induced by 
a section $v' \in H^0(Y,\bigwedge^{q} T_Y \otimes L').$}
\end{conjecture}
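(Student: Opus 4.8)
The plan is to combine the structure theory developed earlier in the paper with Lieberman's original argument and with the decomposition theory for varieties whose canonical bundle has a geometric origin. The statement is Conjecture \ref{c3}, so what I actually give is a strategy rather than a complete proof; the conjecture is open, but the pieces available in the excerpt already give the skeleton of what such a proof should look like. First I would reduce to the case where $X$ is not uniruled: if $X$ were uniruled, then $K_X$ is not pseudo-effective, and one expects (this is essentially Conjecture \ref{c2} in the opposite direction) that a section of $\bigwedge^q T_X \otimes L$ with $q<\dim X$ on a uniruled manifold either forces a product structure directly or contradicts the existence of the rational curves moving through general points; in any case, the heart of the matter is the non-uniruled case, where Theorem \ref{miy} tells us $\Omega^1_X$ is generically nef.

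Assume then $X$ is not uniruled. Dualizing the section $v$, as in Corollary \ref{cor6} and the discussion preceding Corollary \ref{cor5}, the section $v \in H^0(X,\bigwedge^q T_X \otimes L)$ produces a torsion-free quotient $\Omega^1_X \to Q \to 0$ of rank $q$ with $\det Q \equiv -L$, hence $\det Q \equiv 0$; by Theorem \ref{CP} and Proposition \ref{flatquotient} the dual subsheaf $\sF \subset T_X$ with $\det \sF \equiv 0$ is a numerically trivial foliation, because the second fundamental form $\bigwedge^2 \sF/\mathrm{torsion} \to T_X/\sF$ vanishes (the quotient is generically negative on an MR-general curve, exactly as in the paragraph after Proposition \ref{flatquotient}). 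The next step is to show this foliation is \emph{algebraic with algebraic leaves}: this is the analytic input one needs, and it is precisely where the conjectural character enters. Granting algebraicity, the leaf closures fit into a fibration $X \dasharrow B$ (after a birational modification biholomorphic in codimension $1$, and possibly a finite cover \'etale in codimension $1$, in the spirit of Conjecture \ref{c1}), with the general fibre $Y$ having $K_Y \equiv 0$ because $\det \sF \equiv 0$ restricted to a leaf is the canonical bundle of the leaf; by abundance in the relevant dimension and the Beauville--Bogomolov decomposition (as used in Theorem \ref{threefolds}) one arranges $K_Y = \sO_Y$ after a further \'etale-in-codimension-$1$ cover, and by Proposition \ref{prop4} this does not affect whether we are in the situation of the conjecture.

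Once the fibration $X \to B$ with fibres $Y$, $K_Y$ trivial, is in hand, the remaining task is to split off $Y$ as a genuine direct factor after a finite \'etale cover, and to check that $v$ descends correctly. For the splitting I would run Lieberman's argument in the present generality: the key point in \cite{Li78} is that a nowhere-vanishing vector field integrates to a (complex) one-parameter group of automorphisms whose closure in $\Aut^0(X)$ is an abelian variety $A$ acting with finite stabilizers, whence $X \to X/A$ and the covering trick produces the product. Here the analogue should be that the foliation $\sF$, being numerically trivial and algebraic, has $\det \sF = \sO$ (after \'etale cover), so $\sF$ is generated by its sections $H^0(X,\sF) = H^0(Y,T_Y)$-worth of data fibrewise; when $Y$ is a torus these are genuine vector fields and Lieberman applies directly, when $Y$ has trivial canonical bundle but is, say, a Calabi--Yau or hyperk\"ahler factor the "automorphism" coming from $v$ is not a vector field but a section of $\bigwedge^q T_Y$, and one must instead argue via the Albanese map of $X$ (which dominates $B$ once $q(X)>0$, exactly as in the proof of Theorem \ref{threefolds}) together with Kawamata's result that $X$ is birational to $Y \times \mathrm{Alb}$ after \'etale base change, and then use flops and the Beauville--Bogomolov decomposition to upgrade "birational to a product" to "\'etale cover is a product". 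Finally, the section $v$ is, by construction, built from the relative tangent sheaf of $X \to B$, so on $\tilde X = Y \times Z$ it is the pullback of $\bigwedge^q T_Y \otimes L'$ via the first projection, where $L' = L|_Y$ (which is numerically trivial on $Y$); checking $v'$ is nonzero is immediate since $v$ was.

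The main obstacle is the algebraicity of the foliation $\sF$ and, hand in hand with it, the passage from "birational to a product" to "finite \'etale cover is a product" in the presence of factors with trivial canonical bundle that are not tori. For $\kappa(Y)$-type factors one has the abundance-plus-Beauville--Bogomolov machinery, and for the purely birational issue one has Koll\'ar's flop results \cite{Ko89} in dimension $3$, but in higher dimension neither the algebraicity of numerically trivial foliations nor the birational-to-biregular step is known in the generality required; this is exactly why the statement is posed as a conjecture rather than proved. A secondary difficulty is the uniruled case, which is not obviously reducible to the non-uniruled one and really needs Conjecture \ref{c2} (or \cite{MM86}-type vanishing) as an independent input.
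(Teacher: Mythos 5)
This statement is Conjecture \ref{c3}: the paper offers no proof of it, so there is nothing of the author's to measure your text against, and what you have written is, as you yourself say, a strategy rather than a proof. The outline is essentially the one the author envisages elsewhere in the paper (pass to a numerically trivial foliation, invoke a structure result in the spirit of Conjecture \ref{c1}, and upgrade Lieberman's splitting via abundance and the Beauville--Bogomolov decomposition as in Theorem \ref{threefolds}), and you correctly isolate the two notorious open points: algebraicity of the leaves of a numerically trivial foliation, and the promotion of ``birational to a product'' to ``a finite \'etale cover is a product'' outside dimension $3$.

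There are, however, two further gaps beyond the ones you flag. First, your opening reduction is not valid as stated: a section $v\in H^0(X,\bigwedge^q T_X\otimes L)$ does not ``dualize'' to a rank-$q$ torsion-free quotient of $\Omega^1_X$ with numerically trivial determinant, because a $q$-vector need not be locally decomposable; the image of the contraction map $\bigwedge^{q-1}\Omega^1_X\to T_X\otimes L$ can have rank anywhere between $q$ and $\dim X$, and its determinant is not controlled. What $v$ actually gives, via Lemma \ref{lemmasections} (using that $\bigwedge^q T_X$ is a direct summand of $T_X^{\otimes q}$), is only that $\Omega^1_X$ is not generically ample; Proposition \ref{flatquotient} then produces a numerically trivial foliation $\sF$, but this $\sF$ is the maximal destabilizing subsheaf and carries no a priori relation to $v$ --- its rank need not even be $q$. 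Consequently your final step, that $v$ ``is by construction built from the relative tangent sheaf of $X\to B$'' and therefore descends to a section $v'$ on $Y$, is unsupported; tying the given tensor $v$ to the foliation the general theory hands you is itself a nontrivial open problem. Second, the uniruled case is not merely ``not obviously reducible'' to the non-uniruled one: the statement as literally written fails for $X=\bP_2$ with $v$ a vector field and $L=\sO_X$ (here $X$ is simply connected, so the only \'etale cover is $X$ itself, which admits no product decomposition with a positive-dimensional factor of trivial canonical bundle). The conjecture must therefore be read with the non-uniruledness hypothesis (equivalently, by Conjecture \ref{c2}, the hypothesis that $v$ is nowhere vanishing) that is implicit in the surrounding discussion of Lieberman's theorem, and any write-up should make that hypothesis explicit rather than defer it.
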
 

\section{The tangent bundle}
\label{sec:4}

In this section we discuss the dual case: varieties whose tangent bundles are generically nef or generically ample. 
If $X$ is a projective manifold with generically nef tangent bundle $T_X$, then in particular $-K_X$ is generically nef. If $K_X$ is
pseudo-effective, then $K_X \equiv 0$ and the Bogomolov-Beauville decomposition applies. Therefore we will always assume that 
$K_X$ is not pseudo-effective, hence $X$ is uniruled. If moreover $T_X$ is generically ample w.r.t some polarization, then 
$X$ is rationally connected.  Actually much more holds:

\begin{theorem} Let $X$ be a projective manifold. Then $X$ is rationally connected if and only if there exists an
irreducible curve $C \subset X$ such that $T_X \vert C$ is ample.
\end{theorem}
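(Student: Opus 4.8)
\emph{The plan} is to prove the two implications separately, by rather different means. Assume first that $X$ is rationally connected. By Koll\'ar-Miyaoka-Mori and Campana it then carries a \emph{very free} rational curve, i.e.\ a morphism $f \colon \mathbb{P}_1 \to X$ with $f^*T_X$ ample. Put $C := f(\mathbb{P}_1)$ with its reduced structure and let $\nu \colon \mathbb{P}_1 \to C$ be the normalization, so that $f = \iota \circ \nu$ with $\iota \colon C \hookrightarrow X$ the inclusion. Then $\nu^*(T_X \vert C) = f^*T_X$ is ample; since $\nu$ is finite surjective and ampleness of vector bundles descends along finite surjective morphisms, $T_X \vert C$ is ample. (For $\dim X = 1$ this is trivial, as then $X = \mathbb{P}_1 = C$.)

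For the converse, suppose $C \subset X$ is an irreducible curve with $T_X \vert C$ ample. Replacing $C$ by its normalization, we obtain a finite morphism $f \colon B \to X$ from a smooth projective curve $B$ of genus $g$ with $E := f^*T_X$ ample, so in particular $\deg E = -K_X \cdot f_*[B] > 0$. If $g = 0$, then $f$ is itself a very free rational curve and $X$ is rationally connected by Koll\'ar-Miyaoka-Mori and Campana; so assume $g \geq 1$. The crucial step is then to produce rational curves, and here I would reduce to positive characteristic: spread $X$, $C$ and $f$ out over a finitely generated $\mathbb{Z}$-algebra and reduce modulo a large prime $p$, obtaining $f_p \colon B_p \to X_p$ over $\overline{\mathbb{F}}_p$ with $f_p^* T_{X_p}$ ample and $B_p$ smooth of genus $g$. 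Compose $f_p$ with a high power $F^e$ of the Frobenius of $B_p$, so that $(f_p \circ F^e)^* T_{X_p} = (F^e)^* E_p$ stays ample, now of degree $p^e \deg E$, while the genus of $B_p$ and the image curve are unchanged. Writing $n = \dim X$ and fixing a general point $x \in X_p$, the space $\mathrm{Mor}(B_p, X_p;\, b_0 \mapsto x)$ has dimension at least $p^e \deg E - ng$, which is positive for $e \gg 0$; since the deformations of $f_p \circ F^e$ that keep the image equal to $C_p$ and send $b_0$ to $x$ form a finite set (here $g \geq 1$ is used), a nonconstant deformation has image curves that sweep out a surface through $x$, and Mori's bend-and-break then produces a rational curve through $x$. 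Carrying this out for general $x$, the variety $X_p$ is uniruled.

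Uniruledness is still far from rational connectedness, so the final step must exploit the full ampleness of $E$, not merely $\deg E > 0$. Fix two general points $x_1, x_2 \in X_p$; the same Frobenius device makes $\dim \mathrm{Mor}(B_p, X_p;\, b_1 \mapsto x_1,\, b_2 \mapsto x_2)$ grow without bound with $e$, so there is a genuinely moving family of curves through both $x_1$ and $x_2$. Iterating bend-and-break --- breaking off rational curves and inducting on the arithmetic genus of the part of the limit cycle joining $x_1$ to $x_2$ --- one concludes that $x_1$ and $x_2$ lie on a connected chain of rational curves of length and degree bounded independently of $p$. Thus $X_p$ is rationally chain connected for almost all $p$; since the condition ``any two general points lie on a chain of rational curves of bounded length and degree'' is constructible, it then holds for $X$, and being smooth and of characteristic zero, $X$ is rationally connected.

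The genuine obstacle is the very first move: a single curve $B$ of high genus with $f^*T_X$ ample may be rigid in characteristic zero when $\deg E$ is small relative to $g$, so the passage to characteristic $p$ --- where Frobenius inflates the degree without changing the genus --- seems unavoidable, and the bend-and-break bookkeeping needed both to reduce the genus to zero and to pass from free to \emph{connecting} families of rational curves is the technical heart of the matter. An alternative organization of the converse uses the maximal rationally connected fibration $\pi \colon X \dashrightarrow Z$ together with the theorem of Graber-Harris-Starr that $Z$ is not uniruled, proceeding by induction on $\dim X$: one checks that $C$ is not contracted by $\pi$ --- otherwise, lying in a general fibre, $T_X \vert C$ would acquire a trivial quotient from $\pi^*T_Z$, against ampleness --- so $\pi$ carries $C$ onto a curve $\bar C \subset Z$ with $T_Z \vert \bar C$ ample, contradicting the inductive hypothesis that a variety carrying such a curve is uniruled; but this route still rests on the bend-and-break input above.
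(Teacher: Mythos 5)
The paper itself offers no proof of this theorem: it cites \cite{Ko96}, IV.3.7 for the forward implication and \cite{BM01}, \cite{KST07}, \cite{Pe06} for the converse, and your overall strategy (very free curves; reduction modulo $p$, Frobenius amplification and bend-and-break; alternatively the MRC fibration plus Graber--Harris--Starr) is indeed the one used in those references. Your forward direction is correct: a rationally connected manifold carries a very free curve, and ampleness of a vector bundle is detected by pullback under the finite surjective normalization, so $T_X\vert C$ is ample on the image curve.

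The converse, however, has a genuine gap at its central step. The estimate $\dim_{[g]}\mathrm{Mor}(B_p,X_p;\,b_0\mapsto x)\ge -K_{X_p}\cdot g_*B_p-ng$ bounds the local dimension \emph{at an existing morphism} $g$ with $g(b_0)=x$; it says nothing when that space is empty. After Frobenius twisting, the only morphisms you possess have image $C_p$, so you may only impose $x\in C_p$, and what your argument actually yields is a rational curve through every point of $C_p$ --- far from uniruledness of $X_p$. Indeed, the statement you implicitly invoke, that $\deg f^*T_X>0$ together with $g\ge 1$ already forces uniruledness, is false: take $X=S\times T$ with $S$ of general type containing a $(-1)$-curve $E$ and $T$ elliptic, and let $C\subset E\times T$ have bidegree $(2,1)$; then $C$ has genus $1$ and $-K_X\cdot C=2>0$, yet $X$ is not uniruled (all deformations of $C$ stay inside the surface $E\times T$). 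The same objection applies verbatim to the two-point space $\mathrm{Mor}(B_p,X_p;\,b_1\mapsto x_1,\,b_2\mapsto x_2)$ for general $x_1,x_2$. The missing idea --- the place where ampleness, as opposed to mere positivity of the degree, must enter \emph{first} rather than ``in the final step'' --- is the vanishing $H^1\bigl(B_p,(F^e)^*E_p\otimes\mathfrak{m}_{b_0}\otimes\mathfrak{m}_b\bigr)=0$ for $e\gg 0$, which holds because the minimal slope of $E_p$ is positive and grows essentially like $p^e$ under Frobenius pullback (for $p\gg 0$; the possible loss of semistability under Frobenius is controlled as in \cite{SB98}), eventually exceeding $2g$. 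This vanishing makes the evaluation map of the family of deformations of $f_p\circ F^e$ fixing a point over $x\in C_p$ submersive, hence shows these deformations dominate $X_p$; only then are the one- and two-point morphism spaces through general points nonempty, and only then do your dimension counts, the finiteness of maps with fixed image, and bend-and-break (with degree bound $2n\,H\cdot C_e/(-K_{X_p}\cdot C_e)$, independent of $e$ and $p$) produce bounded chains of rational curves through two general points. With that insertion, plus the standard spreading-out/constructibility argument and ``rationally chain connected implies rationally connected for smooth varieties in characteristic zero,'' your outline matches the proofs in \cite{BM01} and \cite{KST07}; your alternative route via the MRC quotient and Graber--Harris--Starr needs the same input to start its induction.
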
 

For the existence of $C$ if $X$ is rationally connected see \cite{Ko96}, IV.3.7; for the other direction we refer to
\cite{BM01}, \cite{KST07} and \cite{Pe06}.

The first class of varieties to consider are certainly Fano manifolds. One main problem here is the following standard

\begin{conjecture} {\it The tangent bundle of a Fano manifold $X$ is stable w.r.t. $-K_X$. }
\end{conjecture}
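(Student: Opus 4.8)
The plan is to attack the stability conjecture for the tangent bundle of a Fano manifold $X$ polarized by $-K_X$ through the circle of ideas coming from Miyaoka's semipositivity theorem and the theory of destabilizing subsheaves, while keeping in mind that in full generality this is a well-known open problem. First I would argue by contradiction: suppose $T_X$ is not semistable with respect to $h = -K_X$, and let $\sF \subset T_X$ be the maximal destabilizing subsheaf of rank $p$, so $0 < p < n = \dim X$ and
$$ \mu_h(\sF) = \frac{c_1(\sF) \cdot h^{n-1}}{p} > \frac{c_1(T_X) \cdot h^{n-1}}{n} = \frac{(-K_X) \cdot h^{n-1}}{n} > 0. $$
The sheaf $\sF$ is then saturated in $T_X$, reflexive, and $h$-semistable, and, crucially, it is canonically defined, hence invariant under $\Aut(X)$. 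The first real step is to study the Harder--Narasimhan filtration restricted to an MR-general curve $C$: by Theorem \ref{MR}, $\sF\vert C$ is the maximal destabilizing subbundle of $T_X\vert C$, and since $\mu_h(\sF) > 0$, the bundle $\sF \vert C$ has positive slope, so $\det \sF \vert C$ is an ample line bundle on $C$; in particular $\det \sF$ is a nonzero effective-ish divisor class positive on the complete intersection cone.

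The second step is to bring in the geometry that makes Fano manifolds special, namely the abundance of rational curves. A maximal destabilizing subsheaf of positive slope on a Fano manifold should force $\sF$ (or rather its saturation, and after possibly passing to the foliation generated by it via the Lie bracket) to be a foliation with positive canonical class in an appropriate sense; here one would invoke the Bogomolov--McQuillan / Bost-type algebraicity results, or the Campana--P\u{a}un / Druel machinery, to conclude that the leaves of $\sF$ are rational and that $\sF$ arises from a rational fibration or a family of rational curves. Concretely, I would try to show that a general leaf of the (saturated, integrable) subsheaf is a rationally connected subvariety, using that $\det \sF$ restricted to a general complete intersection curve is ample together with a bend-and-break / Miyaoka--Mori argument applied fiberwise. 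This would produce an $\Aut(X)$-invariant meromorphic map $X \dashrightarrow Y$ onto a lower-dimensional base, and one then wants to derive a contradiction either from the minimality of $p$ (by splitting off the relative tangent sheaf and repeating) or from known classification results in low rank of $\sF$ ($p = 1$: a foliation by rational curves — handled by Wahl-type theorems and the structure of Fano manifolds with a $\bP^1$-bundle-like structure; $p = 2$: analyze via adjunction on the leaves).

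The main obstacle, and the reason this is stated as a conjecture rather than a theorem, is precisely that controlling the maximal destabilizing subsheaf $\sF$ and proving its leaves are algebraic/rational in arbitrary dimension is exactly the kind of statement that is not known in general — it is essentially of the same difficulty as the generalized Hartshorne-type conjectures and the full foliation minimal model program. In low dimensions ($n \le 3$, and with partial results in $n = 4,5$) one can push the classification through, and for specific classes — Fano manifolds of Picard number one, toric Fano manifolds, Fano manifolds with large index, homogeneous spaces — the conjecture is known by ad hoc or representation-theoretic methods. So the honest status of my "proof" is: reduce stability to the nonexistence of an $\Aut(X)$-invariant positive-slope destabilizing foliation, then dispose of that foliation case by case in the ranges where the foliation algebraicity results are available; the general case remains open, which is why I would present this conjecturally. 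I would also record the easy observations that follow unconditionally: $T_X$ is \emph{generically nef} with respect to $-K_X$ whenever $-K_X$ is nef is false in general, but $T_X \vert C$ being ample for some irreducible $C$ (which holds on any Fano manifold, $X$ being rationally connected) shows $T_X$ is at least not generically trivial, and any destabilizing $\sF$ must have strictly positive slope, so the problematic configurations are genuinely constrained.
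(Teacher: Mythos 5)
The statement you were asked about is presented in the paper as an \emph{open conjecture}: the paper offers no proof, only a list of cases in which stability is known (Theorem \ref{stablefano}: $n\le 5$, large index, homogeneous spaces, certain weighted complete intersection constructions, all with $b_2=1$), and then deliberately retreats to the weaker, provable statement that $T_X$ is generically ample when $-K_X$ is big and nef (Theorem \ref{thmgenample}). Your proposal is therefore correct in its overall assessment — you do not claim a proof, and no proof exists to compare against. Your sketch (pass to the maximal destabilizing subsheaf $\sF$, restrict to an MR-general curve via Theorem \ref{MR}, observe $\mu_h(\sF)>0$, try to realize $\sF$ as a foliation with algebraic rationally connected leaves, and derive a contradiction) is in fact close in spirit to the argument the paper actually runs for the \emph{weaker} conclusion: there the Harder–Narasimhan piece $\sE_C$ that is maximally ample is extended by Mehta–Ramanathan to $\sE\subset T_X$, the inequality $(K_X+\det\sE)\cdot h>0$ is derived, and a contradiction is obtained via an extremal ray of the movable cone and a Fano fibration, using \cite{BCHM09}. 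The gap you correctly identify — that one cannot in general control the destabilizing subsheaf well enough to force full stability rather than mere generic ampleness — is exactly why the paper leaves this as a conjecture.

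One correction to your final paragraph: the sentence asserting that generic nefness of $T_X$ ``whenever $-K_X$ is nef is false in general'' is not supported by the paper; on the contrary, the paper proves generic nefness when $-K_X$ is semi-ample (Theorem \ref{semi-ample}) and explicitly expects it to hold whenever $-K_X$ is nef. Also, the invariance of $\sF$ under $\Aut(X)$, while true, plays no role in any known partial result and does not advance the argument. These are cosmetic issues; the substance of your assessment — reduce to ruling out a positive-slope destabilizing foliation, settle it case by case where algebraicity of leaves is available, and acknowledge the general case is open — is the honest state of the art and is consistent with how the paper treats the question.
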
 

This conjecture is known to be true in many cases, but open in general. 
Here is what is proved so far if $b_2(X) = 1.$ 

\begin{theorem} \label{stablefano} 
Let $X$ be a Fano manifold of dimension $n$ with $b_2(X) = 1.$ Under one of the following conditions the tangent bundle is stable. 
\begin{itemize} 
\item $n \leq 5$ (and semi-stable if $n \leq 6$);
\item $X$ has index $> {{n+1} \over {2}};$
\item $X$ is homogeneous;
\item $X$ (of dimension at least $3$ arises from a weighted projective space by performing the following operations: first take a smooth weighted complete
intersection, then take a cyclic cover, take again a smooth complete intersections; finally stop ad libitum.
\end{itemize} 
\end{theorem}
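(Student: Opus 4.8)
All four statements are known; the plan is to deduce each of them from one common scheme, which I outline first. Write $\Pic(X)=\bZ\cdot H$ with $H$ ample and $-K_X=i_XH$, where $i_X$ is the index, so that stability with respect to $H$ and with respect to $-K_X$ coincide. Suppose $T_X$ is not semistable and let $\sF\subset T_X$ be the maximal destabilising subsheaf; it is saturated, hence reflexive, of rank $1\le r\le n-1$, with $c_1(\sF)=d\,H$ for an integer $d\ge 1$ (its slope being larger than that of $T_X$, which is positive) and $dn>r\,i_X$. The first point is that $\sF$ is a (possibly singular) foliation: for $r\ge 2$ the $\sO_X$-linear bracket map $\bigwedge^2\sF\to T_X/\sF$ must vanish, since otherwise its image would be a quotient of the semistable sheaf $\bigwedge^2\sF$ (exterior powers of semistable sheaves being semistable in characteristic $0$) of slope $2d(H^n)/r$, hence of slope $>d(H^n)/r$, and the preimage of that image in $T_X$ would strictly contain $\sF$ with strictly larger slope, contradicting maximality. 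Next, since $X$ is Fano it carries a minimal dominating family of rational curves; for a general member $C$ one has $T_X\vert_C=\sO(2)\oplus\sO(1)^{\oplus p}\oplus\sO^{\oplus q}$ with $2+p=i_X(H\cdot C)$, and $\sF\vert_C$ is a rank-$r$ subsheaf of it, so $d\,(H\cdot C)\le 2+\min(r-1,p)$. If the general such $C$ is not tangent to $\sF$, the induced map $\sF\vert_C\to N_{C/X}=\sO(1)^{\oplus p}\oplus\sO^{\oplus q}$ is injective, giving the sharper $d\,(H\cdot C)\le\min(r,p)$; if the general $C$ is tangent to $\sF$, then at a general point $x$ the variety of minimal rational tangents $\sC_x\subset\bP(T_xX)$ lies in $\bP(\sF_x)$, so that a linearly nondegenerate $\sC_x$ forces $\sF_x=T_xX$, against $r<n$. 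Confronting these inequalities with $dn>r\,i_X$ is what produces the contradiction in each case; the geometry of $X$ enters only at this last step.

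For the large-index item, $i_X>(n+1)/2$, I would invoke the classification of Fano manifolds with $b_2=1$ and index larger than $(n+1)/2$ — projective space, quadrics, del Pezzo and Mukai manifolds — together with the explicit, irreducible varieties of minimal rational tangents that are known for all of these, and then run the dichotomy above class by class. For $n\le 5$ (and only semistability for $n\le 6$, since there the relevant inequalities cease to be strict) I would quote the theorem of Peternell and Wi\'sniewski: one uses the classification of Fano manifolds with $b_2=1$ in small dimensions (Iskovskikh, Mukai, the low-coindex classification) and checks case by case that no destabilising foliation $\sF$ exists, bounding via Mori's bend-and-break the degrees of the minimal rational curves tangent to $\sF$.

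If $X=G/P$ is rational homogeneous with $b_2=1$, then $P$ is a maximal parabolic; since the maximal destabilising subsheaf is unique, $\sF$ is invariant under the connected group $G$, hence a homogeneous subbundle associated with a proper $P$-submodule $V\subsetneq\mathfrak g/\mathfrak p$, and $c_1(\sF)$ and $c_1(T_X)$ are the characters of $P$ on $\det V$ and on $\det(\mathfrak g/\mathfrak p)$; the inequality $dn>r\,i_X$ is then excluded by a direct computation with the roots occurring in $\mathfrak g/\mathfrak p$ (classical; for $X$ Hermitian symmetric due to Umemura), which I would reproduce. For the last item I would argue by induction over the three permitted operations: the tangent sheaf of a weighted projective space is (semi)stable by its weighted Euler sequence; for a smooth weighted complete intersection $Y$ of sufficiently small multidegree the conormal sequence combined with the Mehta--Ramanathan restriction theorem carries (semi)stability over to $T_Y$; and for a cyclic cover $\pi\colon X'\to Y$ one compares $T_{X'}$ with $\pi^*T_Y$, which agree away from the ramification divisor, upgrading the semistability of the pullback (under a finite map) of a stable bundle to the stability of $T_{X'}$ via the branch term. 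The smoothness and degree conditions built into the construction are precisely what keeps the restriction theorem applicable at every stage.

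The hard part is the low-dimensional item. Exactly when the variety of minimal rational tangents of $X$ is linearly degenerate — which does occur among Fano manifolds of index one — the clean dichotomy above no longer closes, and one is forced into a genuine case-by-case analysis inside the classification, eliminating each candidate destabilising foliation either by an explicit description of its leaves or by a second, finer use of the theory of minimal rational curves. The subsidiary technical nuisances — passing to a foliation in the merely torsion-free setting, and controlling $\sF\vert_C$ for $C$ general in a minimal dominating family — are routine but must be handled with care.
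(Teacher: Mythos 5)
The paper does not prove this theorem at all: it is a compilation of known results, and the entire ``proof'' consists of the sentence following the statement, namely that the first two items are in \cite{Hw01}, the third is classical, and the fourth is in \cite{PW95}. So there is no argument in the paper to compare yours against line by line; the only fair comparison is with the proofs in those references. Measured against that, your general scheme is the right one and matches the literature: the maximal destabilising subsheaf $\sF$ is saturated and integrable (your bracket argument via semistability of $\bigwedge^2\sF$ is the standard one), the numerical constraint $dn>r\,i_X$ is confronted with the splitting type of $T_X$ on a minimal rational curve and with the position of the variety of minimal rational tangents relative to $\bP(\sF_x)$, the homogeneous case reduces to a root computation for a $P$-submodule of $\mathfrak g/\mathfrak p$, and the weighted-complete-intersection case is an induction over the three operations exactly as in \cite{PW95}.

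There is, however, one concrete misstep and one structural caveat. The misstep: for the item $i_X>(n+1)/2$ you propose to ``invoke the classification'' of such Fano manifolds and list projective space, quadrics, del Pezzo and Mukai manifolds. Those lists cover coindex $\le 3$, i.e.\ $i_X\ge n-2$; for large $n$ the condition $i_X>(n+1)/2$ is far weaker and no classification exists. The actual argument in \cite{Hw01} is classification-free: since $2+p=i_X(H\cdot C)\ge i_X$, the large-index hypothesis forces the variety of minimal rational tangents to have large dimension $p$, and this alone closes your inequality $d(H\cdot C)\le 2+\min(r-1,p)$ against $dn>r\,i_X$ without knowing what $X$ is. You should replace the appeal to classification by this dimension count. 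The structural caveat: everywhere else your text is an honest sketch that defers the substantive work (``I would quote the theorem of\ldots'', ``I would reproduce\ldots'', the acknowledged case-by-case analysis when the VMRT is degenerate), so it cannot be certified as a proof — but since the paper itself supplies nothing beyond citations, an outline of the cited proofs is arguably the appropriate level of detail here. A minor attribution point: the low-dimensional item ($n\le 5$, semistability for $n\le 6$) is due to Hwang \cite{Hw01}, not to Peternell--Wi\'sniewski, whose paper \cite{PW95} is the source for the fourth item and for threefolds.
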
 

For the first two assertions see \cite{Hw01}; the third is classical; the last is in \cite{PW95}. 

By Corollary 3.4, generic nefness, even generic ampleness, is a consequence of stability in case of Fano manifolds. 
Therefore generic nefness/ampleness is a
weak version of stability. So it is natural to ask for generic nefness/ampleness of the tangent bundle of Fano manifolds:

\begin{theorem} \label{thmgenample}  Let $X$ be a projective manifold with $-K_X$ big and nef. 
Then $T_X$ is generically ample (with respect to any polarization). 
\end{theorem}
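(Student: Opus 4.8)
The plan is to run the same argument as in Theorem \ref{genample}, but now applied to the \emph{tangent} side, using the minimal model program on $X$ together with Miyaoka's generic nefness of the cotangent bundle of a non-uniruled variety. Concretely, suppose for contradiction that $T_X$ is not generically ample with respect to some polarization $h$. Since $-K_X$ is big and nef, $X$ is rationally connected (e.g.\ by \cite{Zh06}, or note $-K_X$ big and nef forces rational connectedness), so $X$ is uniruled and we cannot directly invoke Miyaoka's theorem on $X$ itself; this is why a more careful argument is needed than a literal transcription of the cotangent case.

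\textbf{Step 1: produce a destabilizing subsheaf.}
Let $C$ be MR-general with respect to $h$. If $T_X$ is not $h$-generically ample, then $T_X|_C$ is a nef but not ample vector bundle on $C$, so it contains a maximal ample subsheaf whose quotient $Q_C$ is numerically flat; as in the proof of Proposition \ref{flatquotient} this spreads out (via \cite{MR82}) to a reflexive quotient $T_X \to Q \to 0$ with $c_1(Q)\cdot C = 0$. Equivalently, we get a nonzero subsheaf $\sS\subset \Omega^1_X$ with $c_1(\sS)\cdot C = 0$, i.e.\ $\det\sS$ is numerically trivial along $C$. The point to extract is that $\Omega^1_X$ carries a subsheaf $\sS$ with $\mu_h(\sS)=0$ while $\mu_h(\Omega^1_X) = K_X\cdot h < 0$ (here $K_X\cdot h<0$ because $-K_X$ is big and nef, hence $h$-positive).

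\textbf{Step 2: bound the determinant using bigness and nefness of $-K_X$.}
This is the crux. One wants to show $\det Q$ (equivalently $-\det\sS$) is pseudo-effective, which combined with $c_1(Q)\cdot C=0$ and the MR-generality of $C$ would force $\det Q \equiv 0$, and then $\det\sS\equiv 0$. Since $X$ is uniruled, Corollary \ref{CP1} is not available directly on $X$. Instead I would argue on a (relative or $K_X$-)minimal model or an MRC-type reduction: because $-K_X$ is big and nef, $X$ is a (possibly singular, after running an MMP for $K_X + \epsilon(-K_X)$ type argument, or simply) weak Fano, and one can use the strong positivity of $-K_X$ to rule out a quotient of $\Omega^1_X$ with numerically trivial determinant. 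Concretely: a numerically trivial quotient determinant of $\Omega^1_X$ would give a section of $\bigwedge^q T_X\otimes L$ with $L\equiv 0$ (Corollary \ref{cor6}), hence a nonzero global section of $\bigwedge^q T_X\otimes L$; but on a manifold with $-K_X$ big and nef, such "extra" holomorphic poly-vector fields with numerically trivial twist cannot exist — this should follow from a vanishing theorem for $\bigwedge^q T_X \otimes L = \Omega^{n-q}_X \otimes \det(-K_X)\otimes L$ together with Kawamata–Viehweg-type vanishing, since $\det T_X \otimes L = -K_X + L$ is big and nef, so $H^0(X, \Omega^{n-q}_X \otimes (-K_X) \otimes L)$ is controlled by Bogomolov–Sommese type vanishing (no big subsheaf of $\Omega^{n-q}_X$ when $X$ is rationally connected). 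Indeed $\bigwedge^q T_X\otimes L \cong \Omega^{n-q}_X\otimes \sO_X(-K_X)\otimes L$, and a nonzero section of this is a nonzero map $\Omega^q_X \to \sO_X(-K_X)\otimes L$ after dualizing appropriately; since $-K_X+L$ is big, this produces a big (hence positive Kodaira dimension) invertible quotient or subsheaf of some $\Omega^p_X$, contradicting rational connectedness of $X$ via the Bogomolov–Sommese vanishing theorem (a rationally connected manifold has no such big line subsheaf of $\Omega^p_X$ for $p\geq 1$).

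\textbf{Main obstacle.}
The delicate point is Step 2: translating "$T_X$ not generically ample" into a genuine contradiction with "$-K_X$ big and nef." The cotangent-bundle arguments of the previous section rely on $X$ being non-uniruled (to apply Miyaoka/CP), which fails here, so one must substitute a Bogomolov–Sommese vanishing argument for rationally connected manifolds (no invertible subsheaf of $\Omega^p_X$ of maximal Kodaira dimension) and check it interacts correctly with the numerically trivial twist $L$ and with the possible singular/reflexive nature of the destabilizing sheaf $\sS$. Handling the numerically trivial line bundle $L$ (rather than $\sO_X$) in the vanishing statement, and ensuring the argument descends through the reflexive hull on the possibly-singular ambient space, are the technical hurdles; everything else is a direct adaptation of Proposition \ref{flatquotient} and Corollary \ref{cor6}.
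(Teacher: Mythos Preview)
Your Step 1 contains a genuine gap. You assert that if $T_X$ is not $h$-generically ample then $T_X\vert_C$ is \emph{nef but not ample} on an MR-general curve $C$, and then invoke the maximal-ample-subsheaf/numerically-flat-quotient mechanism of Proposition \ref{flatquotient}. But nothing guarantees that $T_X\vert_C$ is nef: Miyaoka's theorem gives generic nefness of $\Omega^1_X$ for \emph{non-uniruled} $X$, not of $T_X$ for uniruled $X$, and generic nefness of $T_X$ is precisely part of what the theorem is trying to establish. Without nefness the quotient $Q_C$ need not be numerically flat; some Harder--Narasimhan slopes of $T_X\vert_C$ may well be strictly negative, so you only get $c_1(Q)\cdot C \leq 0$, not $=0$. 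Consequently you cannot conclude $\det Q\equiv 0$, and the whole Step 2 program (producing a numerically trivial line subsheaf of some $\Omega^q_X$ and killing it via rational connectedness) never gets off the ground. You correctly flag in Step 2 that Corollary \ref{CP1} is unavailable since $X$ is uniruled, but the workaround you sketch does not address the missing inequality $c_1(Q)\cdot C = 0$.

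The paper's argument is organized differently and avoids this trap. One does not assume $T_X\vert_C$ is nef; instead one takes the maximally ample HN-piece $\sE_C\subset T_X\vert_C$, extends it to $\sE\subset T_X$ via Mehta--Ramanathan, and obtains the \emph{strict} inequality $(K_X+\det\sE)\cdot h>0$. The point is then that $\det\sE\hookrightarrow\bigwedge^k T_X\cong\Omega^{n-k}_X\otimes\sO_X(-K_X)$, so $K_X+\det\sE$ sits as a line subsheaf of $\Omega^{n-k}_X$ with positive $h$-slope. When $b_2(X)=1$ this forces $K_X+\det\sE$ to be ample, contradicting rational connectedness directly. In general one invokes \cite{BCHM09} to find an extremal ray $R\subset\overline{ME}(X)$ with $(K_X+\det\sE)\cdot R>0$ and passes to an associated Fano fibration on which $K_X+\det\sE$ is relatively ample; the contradiction then comes from the rationally connected fibers. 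So the correct mechanism is a \emph{positive-degree} line subsheaf of $\Omega^{n-k}_X$ combined with an MMP/extremal-ray argument, not a numerically trivial one as in your proposal.
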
 

If $b_2(X) \geq 2,$ then of course the tangent bundle might not be (semi-)stable w.r.t. $-K_X;$ consider e.g. the
product of projective spaces (of different dimensions). \\ 
The proof of Theorem \ref{thmgenample} is given in \cite{Pe08}. The key to the proof is the following observation. 
Fix a polarization $h = (H_1, \ldots, H_{n-1}),$ where $n = \dim X.$ Suppose that $T_X$ is not $h-$generically ample. 
Since $-K_X \cdot h > 0,$ we may apply Corollary \label{CorMR} and therefore $T_X$ is not $h-$semi-stable More precisely, let $C$ be MR-general 
w.r.t. $h,$ then $T_X \vert C$ is not ample. Now we consider the Harder-Narasimhan filtration and find a piece $\sE_C$ which
is maximally ample, i.e., $\sE_C$ contains all ample subsheaves of $T_X \vert C.$ By the theory of Mehta-Ramanathan \cite{MR82},
the sheaf $\sE_C$ extends to a saturated subsheaf $\sE \subset T_X$. The maximal ampleness easily leads to the inequality
$$ (K_X + \det \sE) \cdot h > 0.$$
On the other hand, $K_X + \det \sE$ is a subsheaf of $\Omega^{n-k}_X.$ 
If $X$ is Fano with $b_2(X) = 1, $ then we conclude that $K_X + \det \sE$ must be ample, which is clearly impossible, e.g. by arguing
via rational
connectedness. In general we show, based on \cite{BCHM09}, that the movable $\overline{ME}(X)$ contains an extremal ray $R$ such that
$$ (K_X + \det \sE) \cdot R > 0.$$
This eventually leads, possible after passing to a suitable birational model, to a Fano fibration $f: X \to Y$ 
such that $K_X + \det \sE $ is relatively ample. This yields a contradiction in the same spirit as in the Fano case above.

\vskip .2cm With substantially more efforts, one can extend the last theorem in the following way.

\begin{theorem} \label{semi-ample} Let $X$ be a projective manifold with $-K_X$ semi-ample. Then $T_X$ is generically nef.
\end{theorem}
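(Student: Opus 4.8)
The plan is to reduce the semi-ample case to the big and nef case already handled in Theorem \ref{thmgenample}, by running over the Iitaka-type fibration attached to $-K_X$. Since $-K_X$ is semi-ample, a suitable multiple $-mK_X$ is base-point free and defines a morphism $\varphi = \varphi_{|-mK_X|} \colon X \to Y$ onto a normal projective variety $Y$, with connected fibers, such that $-K_X = \varphi^*(A)$ for some ample $\mathbb Q$-Cartier divisor $A$ on $Y$. The general fiber $F$ of $\varphi$ satisfies $-K_F = (-K_X)|_F \equiv 0$, so $F$ has torsion canonical bundle; after a finite \'etale cover the Beauville--Bogomolov decomposition applies to $F$. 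At the other extreme, if $\varphi$ is generically finite then $-K_X$ is big and nef and Theorem \ref{thmgenample} finishes the proof, so the content is in the intermediate range $0 < \dim Y < \dim X$.

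First I would fix a polarization $h = (H_1,\dots,H_{n-1})$ and suppose, for contradiction, that $T_X$ is not $h$-generically nef. Arguing as in the sketch of the proof of Theorem \ref{thmgenample}: since MR-general curves $C$ avoid the (countably many) bad loci and $-K_X \cdot h \ge 0$, Miyaoka's argument via Corollary 3.4 shows that $T_X|_C$ fails to be nef; taking the Harder--Narasimhan filtration we extract the maximal subsheaf $\sE_C \subset T_X|_C$ containing every subsheaf of nonnegative degree (the ``generically nef part''), and by Mehta--Ramanathan \cite{MR82} it extends to a saturated subsheaf $\sE \subset T_X$ of some rank $k < n$. The maximal-nefness property of $\sE_C$ forces the strict inequality
$$ (K_X + \det \sE) \cdot h > 0, $$
while $K_X + \det\sE$ embeds as a subsheaf of $\Omega^{n-k}_X$.

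Next I would split according to the position of $\sE$ relative to the fibration $\varphi$. Let $F$ be a general fiber. Restricting the inclusion $K_X + \det\sE \hookrightarrow \Omega^{n-k}_X$ to $F$ and using $K_X|_F \equiv 0$, we get $(\det\sE)|_F \hookrightarrow \Omega^{n-k}_F$ (up to torsion). Since $F$ has (numerically) trivial canonical bundle, after finite \'etale cover $\Omega^{n-k}_F$ is generically nef — indeed numerically flat on each Beauville--Bogomolov factor up to the degrees contributed by the irreducible holomorphic symplectic and Calabi--Yau pieces, and in particular no subsheaf of $\Omega^{n-k}_F$ has strictly positive degree against an ample class on $F$. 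Hence $(\det\sE)|_F \cdot (h|_F) \le 0$. Combined with $-K_X = \varphi^*A$ being trivial on fibers, this says that $K_X + \det\sE$ is nonpositive on the general complete intersection curve inside a fiber. Therefore the positivity $(K_X+\det\sE)\cdot h > 0$ must come ``from the base'': more precisely, pushing down, there is a movable curve class on $Y$ — equivalently an extremal ray $R$ of $\overline{ME}(X)$ coming from a covering family of curves dominating $Y$ — with $(K_X+\det\sE)\cdot R > 0$. Running the argument exactly as in Theorem \ref{thmgenample}, after passing to a suitable birational model one obtains a Mori fiber space structure $g\colon X' \to Y'$ relative to which $K_{X'} + \det\sE'$ is relatively ample; restricting to a general fiber $G$ of $g$, which is a Fano variety, we find an ample subsheaf $\det \sE'|_G \hookrightarrow \Omega^{n-k}_{G}$ twisted appropriately, contradicting rational connectedness of $G$ just as before.

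The main obstacle, and the place where real work beyond Theorem \ref{thmgenample} is needed, is the interplay between the subsheaf $\sE$ and the fibration $\varphi$ when $\sE$ is neither horizontal nor vertical with respect to $\varphi$ — i.e.\ controlling $\det\sE$ on curves that are partly in fibers and partly moving in $Y$, and ensuring the decomposition ``fiber contribution $\le 0$, base contribution $> 0$'' is actually valid. This requires a careful semistability/slope bookkeeping for the restriction of $\sE$ to the general fiber (using that $\Omega^1_F$ is generically nef after \'etale cover, a consequence of Beauville--Bogomolov), together with the relative MMP machinery of \cite{BCHM09} to produce the relative Fano fibration — and one must check that the birational modifications involved do not destroy the subsheaf or the relevant inequality, exactly the delicate point already flagged in the proof of Theorem \ref{genample} that no ``extraction'' is allowed. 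I expect this fiberwise-versus-base analysis, rather than the Fano endgame, to be where the ``substantially more efforts'' are spent.
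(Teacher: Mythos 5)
The paper itself gives no proof of Theorem \ref{semi-ample}: it is quoted from \cite{Pe08}, and the only guidance in the text is the sketch given for the big-and-nef case (Theorem \ref{thmgenample}) together with the remark that the extension requires ``substantially more efforts''. Your outline is the natural extension of that sketch --- the fibration $\varphi$ defined by $|-mK_X|$, the Mehta--Ramanathan subsheaf $\sE\subset T_X$ with $(K_X+\det\sE)\cdot h>0$, non-positivity of $\det\sE$ on general fibers of $\varphi$ via Miyaoka/Beauville--Bogomolov (this part is essentially sound: $\Omega^1_F$ is generically nef with $\det\Omega^1_F\equiv 0$, hence semistable of slope $0$ for every polarization, and $\Omega^{n-k}_X\vert F$ is filtered by exterior powers of $\Omega^1_F$ tensored with trivial bundles, so no subsheaf has positive degree), and a Mori-fibration endgame. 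So the strategy is the intended one.

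It is, however, a programme rather than a proof: the decisive step is missing, and you flag it yourself. Knowing $(\det\sE)\vert F\cdot(h\vert F)\le 0$ on a general fiber and $(K_X+\det\sE)\cdot h>0$ does not by itself yield an extremal ray $R$ of $\overline{ME}(X)$ ``coming from the base'' with $(K_X+\det\sE)\cdot R>0$: the class $h=H_1\cdots H_{n-1}$ does not split into a fiber part and a horizontal part, an MR-general curve for $h$ is never contained in a fiber, and the sheaf $\sE$ produced by restriction to such a curve need not be compatible with the Harder--Narasimhan filtration of $T_X\vert F$, so the slope bookkeeping relating the two inequalities is exactly what has to be proved. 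Likewise, the passage to a birational model carrying a Fano fibration on which $K+\det\sE$ is relatively ample is only asserted; already in the big-and-nef case this is the technical core of \cite{Pe08}, and in the merely semi-ample case one must in addition handle the fiber directions of $\varphi$, where $-K_X$ contributes no positivity at all (note also that the extremal ray furnished abstractly by $(K_X+\det\sE)\cdot h>0$ need not satisfy $K_X\cdot R<0$, which is what the MMP step requires). Until the ``fiber contribution $\le 0$, base contribution $>0$'' dichotomy is actually established and shown to survive the birational modifications, there is a genuine gap at the heart of the argument.
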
 

From Theorem \ref{thm3} we therefore deduce

\begin{corollary} Let $X$ be an $n-$dimensional projective manifold with $-K_X$ semi-ample. Then 
$$c_2(X) \cdot H_1 \ldots \cdot H_{n-2} \geq 0 $$
for all ample line bundles $H_j$ on $X$.
\end{corollary}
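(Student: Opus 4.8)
The plan is to deduce this directly from Theorem \ref{semi-ample} together with Miyaoka's Chern class inequality, Theorem \ref{thm3}.

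First I would fix the ample line bundles $H_1, \ldots, H_{n-2}$ and choose an arbitrary ample line bundle $H_{n-1}$ so as to form a polarization $h = (H_1, \ldots, H_{n-1})$. Since $-K_X$ is semi-ample it is in particular nef, and as $X$ is smooth $-K_X = \det T_X$ is a genuine (Cartier) line bundle; thus $\det T_X$ is $\mathbb{Q}$-Cartier and nef. Next, by Theorem \ref{semi-ample} the tangent bundle $T_X$ is generically nef, hence in particular $h$-generically nef for the chosen polarization $h$ in the sense of Definition \ref{basicdef}. Moreover $T_X$ is locally free, hence torsion free, and $X$, being a manifold, is smooth in codimension $2$. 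Therefore all hypotheses of Theorem \ref{thm3} are satisfied with $\sE = T_X$, and we conclude
\[ c_2(X) \cdot H_1 \cdots H_{n-2} = c_2(T_X) \cdot H_1 \cdots H_{n-2} \geq 0. \]
As $H_1, \ldots, H_{n-2}$ were arbitrary ample line bundles, this is the assertion.

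There is essentially no obstacle in this deduction: the entire content of the statement is carried by Theorem \ref{semi-ample} (whose proof is substantially more involved) and by Theorem \ref{thm3}. The only things to verify are the formal hypotheses listed above — torsion-freeness of $\sE = T_X$, smoothness of $X$ in codimension $2$, generic nefness of $T_X$ with respect to $h$, and nefness of $\det T_X = -K_X$ — all of which are immediate once $-K_X$ is semi-ample.
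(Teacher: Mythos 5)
Your deduction is correct and is exactly the paper's argument: the corollary is stated there as an immediate consequence of Theorem \ref{semi-ample} (generic nefness of $T_X$) combined with Theorem \ref{thm3}, with $\det T_X = -K_X$ nef because semi-ample implies nef. Your verification of the formal hypotheses (torsion-freeness, smoothness in codimension $2$, choice of an auxiliary $H_{n-1}$ to complete the polarization) is all that is needed.
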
 

Of course Theorem \ref{semi-ample} should hold for all manifolds $X$ with $-K_X$ nef, and therefore also the inequality from the
last corollary should be true in this case. 

For biregular problems generic nefness is not enough; in fact, if $x \in X$ is a fixed point and $T_X$ is generically
nef, then it is not at all clear whether there is just one curve $C$ passing through $p$ such that $T_X \vert C$ is nef. 
Therefore we make the following

\begin{definition} Let $X$ be a projective manifold and $E$ a vector bundle on $X$. We say that $E$ is sufficiently nef
if for any $x \in X$ there is a family $(C_t)$ of curves through $x$ covering $X$ such that $E \vert C_t$ is
nef for general $t.$
\end{definition} 

We want to apply this to the study of manifolds $X$ with $-K_X$ nef:

\begin{conjecture}{\it Let $X$ be a projective manifold with $-K_X$ nef. Then the Albanese map is a surjective submersion.} 
\end{conjecture}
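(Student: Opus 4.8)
The plan is to reduce to the uniruled case, exhaust what generic nefness of $T_X$ yields, and isolate the one step that needs $-K_X$ nef in an essential way. Two cases are immediate: if $-K_X$ is big, then $T_X$ is generically ample by Theorem~\ref{thmgenample}, so $X$ is rationally connected and $q(X)=0$, and $\alpha$ is the constant map to a point; if $K_X$ is pseudo-effective, then $-K_X$, being nef, is pseudo-effective too, and as the pseudo-effective cone is pointed we get $K_X\equiv 0$, so by Beauville--Bogomolov a finite \'etale cover of $X$ is $T\times Y$ with $T$ a torus and $Y$ simply connected with $K_Y\cong\sO_Y$; the Albanese map of $T\times Y$ is the projection to $T$, a submersion, and it descends to $X$ because an \'etale cover induces a surjective smooth morphism of Albanese tori. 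So from now on $X$ is uniruled, $K_X$ is not pseudo-effective and $(-K_X)^n=0$; in this situation we invoke the theorem of Q.\,Zhang that $\alpha\colon X\to A$ is surjective with connected fibres (the guiding point being that every rational curve is contracted by $\alpha$, so that $\alpha(X)$ cannot be too positive: a proper image would, by Ueno's theorem, dominate a positive-dimensional variety of general type, which such an $X$ does not).

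Now for the submersion. Dualising the pullback map $\alpha^{*}\Omega^{1}_{A}=\sO_X^{\oplus q}\to\Omega^{1}_X$ gives the differential $\varphi=d\alpha\colon T_X\to\sO_X^{\oplus q}$; put $\sG=\im\varphi\subset\sO_X^{\oplus q}$ and $\sF=\ker\varphi$, the relative tangent foliation of $\alpha$. By Theorem~\ref{semi-ample} the bundle $T_X$ is generically nef --- this is stated there for $-K_X$ semi-ample, and for the nef case one invokes its expected extension. Hence for an MR-general curve $C$ the bundle $T_X|_C$ is nef, so its locally free quotient $\sG|_C$ is nef and $c_1(\sG)\cdot C=\deg(\sG|_C)\ge 0$. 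On the other hand, taking determinants in the inclusion $\sG\hookrightarrow\sO_X^{\oplus q}$ shows $c_1(\sG)=-E$ for an effective divisor $E\ge 0$; thus $E\cdot C\le 0$, while $E\cdot C\ge 0$ because $C$ is movable, so $E\cdot C=0$. As this holds for every MR-general $C$ and every polarization, $E\cdot H^{n-1}=0$ for all ample $H$, hence $E=0$ and $\det\sG\cong\sO_X$. Since $\alpha$ is dominant, generic smoothness gives $\rk\sG=q$, so the cokernel $T'=\sO_X^{\oplus q}/\sG$ is torsion; $\det\sG\cong\sO_X$ then forces its codimension-one support to vanish. Consequently $\alpha$ is a submersion away from a closed set $S=\operatorname{Supp}(T')$ of codimension $\ge 2$ in $X$, and on $X\setminus S$ the subsheaf $\sF\subset T_X$ is a subbundle with $\det\sF=-K_X$.

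The remaining step --- proving $S=\varnothing$, equivalently that $\alpha$ is equidimensional with all fibres smooth --- is the crux, and it is here that $-K_X$ nef must be used beyond generic nefness of $T_X$, which sees nothing of the codimension-two locus. The natural route is through positivity of direct images: since $K_A\cong\sO_A$ one has $-K_{X/A}=-K_X$ nef, the sheaves $\alpha_{*}(\omega_{X/A}^{\otimes m})$ carry positively curved singular Hermitian metrics with pseudo-effective determinant by Berndtsson--Paun, and nefness of $-K_X$ should force $\det\bigl(\alpha_{*}(\omega_{X/A}^{\otimes m})\bigr)\equiv 0$ and hence numerical flatness of these sheaves, whereas a singular or higher-dimensional fibre over $S$ would produce strict positivity in one of them --- a contradiction (this is, in outline, Cao's argument). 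Equivalently, one would like to show directly that the algebraically integrable foliation $\sF$, whose determinant $-K_X$ is nef, extends to a regular foliation with the fibres of $\alpha$ as leaves. Once $S=\varnothing$, $\alpha$ is a submersion, and a rigidity argument then upgrades this to: $\alpha$ is an analytic fibre bundle. This last input lies beyond the techniques of the present paper, and is where I expect the real difficulty to lie.
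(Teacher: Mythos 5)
This statement is labelled a \emph{conjecture} in the paper, and the paper offers no proof of it: it only records that surjectivity of the Albanese map is known by Qi Zhang's char-$p$ methods \cite{Zh05}, that smoothness is known only in dimension $\le 3$ \cite{PS98}, and that the submersion property would follow from the (also unproven) statement that $T_X$ is \emph{sufficiently nef} when $-K_X$ is nef. Your proposal is, by your own admission, not a proof either, so there is a genuine gap --- indeed the gap is exactly the open part of the conjecture. Concretely: (i) your appeal to ``the expected extension'' of Theorem~\ref{semi-ample} from $-K_X$ semi-ample to $-K_X$ nef is itself an open problem stated in the paper, so even the generic nefness of $T_X$ that you use is not available; (ii) even granting it, your argument with $\sG=\im(d\alpha)\subset\sO_X^{\oplus q}$ only shows $\det\sG\cong\sO_X$ and hence that $\alpha$ is a submersion outside a closed set of codimension $\ge 2$ --- as you correctly observe, any argument based on restricting $T_X$ to MR-general curves is structurally blind to codimension two and cannot close this gap. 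The sketched appeal to positivity of $\alpha_*(\omega_{X/A}^{\otimes m})$ is a plausible program (and is indeed the direction in which the problem was later resolved), but as written it is not an argument.

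It is worth contrasting your partial route with the paper's own conditional one. The paper's Proposition on sufficiently nef tangent bundles avoids the codimension-two problem entirely by working pointwise: if $\alpha$ failed to be a submersion at $x$, some holomorphic $1$-form $\omega$ would vanish at $x$, and restricting to a curve $C$ through $x$ from a covering family with $T_X\vert C$ nef, $\omega\vert C$ would be a non-zero section of $T_X^*\vert C$ with a zero, contradicting nefness of $T_X\vert C$ (Proposition~\ref{easy}). This is cleaner and genuinely local, but it requires the stronger hypothesis of sufficient nefness, which for $-K_X$ merely nef is exactly as open as the conjecture itself. Your reductions in the big-and-nef case (via Theorem~\ref{thmgenample} and rational connectedness) and in the $K_X\equiv 0$ case (via Beauville--Bogomolov) are correct but only dispose of the easy ends of the spectrum; the intermediate uniruled case with $(-K_X)^n=0$ remains untouched by both your argument and the paper's.
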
 

Surjectivity is known by Qi Zhang \cite{Zh05} using char $p-$methods, smoothness of the Albanese map only in dimension at most 3
by \cite{PS98}.
The connection to the previous definition is given by 

\begin{proposition} Suppose that $T_X$ is sufficiently nef. Then the Albanese map is a surjective submersion. 
\end{proposition}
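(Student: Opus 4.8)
The plan is to show that if $T_X$ is sufficiently nef, then the Albanese map $\alpha: X \to A = \mathrm{Alb}(X)$ is first surjective and then a submersion. For surjectivity, fix a general point $x \in X$ and a covering family $(C_t)$ of curves through $x$ with $T_X \vert C_t$ nef for general $t$. The differential of $\alpha$ gives, for each such $t$, a map $T_X \vert C_t \to \alpha^*(T_A) \vert C_t = \alpha^*(\Omega^1_A)^* \vert C_t$, equivalently a map of the trivial bundle $\alpha^*(\Omega^1_A) \vert C_t \to (\Omega^1_X) \vert C_t$; dualizing, $\Omega^1_X \vert C_t$ surjects onto a subsheaf of a trivial bundle. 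Since $T_X \vert C_t$ is nef, every quotient of $T_X \vert C_t$ is nef, and a nef subsheaf of $\mathcal O_{C_t}^{\oplus q}$ generated this way forces the relevant pullback forms to behave well; more directly, the point is that the $1$-forms in $H^0(X,\Omega^1_X) = H^0(A,\Omega^1_A)$ restrict to $C_t$ in a way compatible with nefness of $T_X \vert C_t$, and one deduces that $\alpha(C_t)$ cannot be a point for general $t$ — otherwise all global $1$-forms would vanish on $T_X \vert C_t$, forcing $C_t$ into a fiber of $\alpha$ and eventually (since the $C_t$ cover $X$) $\alpha$ constant, contradicting $q(X) > 0$ whenever $\alpha$ is nontrivial. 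Chaining the covering families, the image $\alpha(X)$ is not a point, and a short argument (the image is a subvariety stable under the relevant translations / generates $A$ as a group) upgrades this to surjectivity of $\alpha$. In fact one can invoke Qi Zhang's theorem \cite{Zh05} directly here since $-K_X$ is nef (sufficient nefness of $T_X$ forces $-K_X$ generically nef, hence nef), so surjectivity is essentially free.

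For submersion, suppose $\alpha$ is not submersive at some point $x_0$; then there is a point $x$ (which by semicontinuity and the fact that the critical locus is closed we may take general within it, but in fact we argue at $x_0$ directly using a covering family through $x_0$) where $d\alpha_x: T_{X,x} \to T_{A,\alpha(x)}$ is not surjective, i.e. $\alpha^*(\Omega^1_A) \to \Omega^1_X$ is not injective as a bundle map at $x$, equivalently the dual $T_X \to \alpha^*(T_A)$ drops rank at $x$. Choose the covering family $(C_t)$ of curves through $x$ with $T_X \vert C_t$ nef. The composition $T_X \vert C_t \to \alpha^*(T_A) \vert C_t \cong \mathcal O_{C_t}^{\oplus \dim A}$ has image a nef (being a quotient of a nef bundle) subsheaf $\mathcal Q_t$ of a trivial bundle on the curve $C_t$; a nef subsheaf of a trivial bundle on a curve has degree $\le 0$ but is nef hence degree $\ge 0$, so $\deg \mathcal Q_t = 0$ and $\mathcal Q_t$ is actually a trivial subbundle (numerically flat, and a numerically flat subsheaf of a trivial bundle over a smooth curve is a trivial subbundle). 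Therefore the map $T_X \vert C_t \to \alpha^*(T_A) \vert C_t$ has locally free image of constant rank along $C_t$; in particular its rank at the point $x$ equals its generic rank along $C_t$. But for general $t$ the curve $C_t$ passes through points where $\alpha$ is submersive (the non-submersive locus is a proper closed subset, and the $C_t$ move in a covering family, so a general $C_t$ meets the submersive locus), so the generic rank along $C_t$ is $\dim A$; hence the rank at $x$ is $\dim A$, i.e. $\alpha$ is submersive at $x$ — contradiction.

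I would organize the write-up as: (1) reduce to showing $d\alpha$ has constant rank $\dim A$ everywhere; (2) the key lemma — for a curve $C$ with $T_X \vert C$ nef, the natural map $T_X \vert C \to \alpha^*(T_A) \vert C$ has a locally free image which is a trivial subbundle of the trivial bundle $\alpha^*(T_A)\vert C$, hence constant rank along $C$; (3) a general member $C_t$ of a covering family through any prescribed point $x$ meets the open locus where $\alpha$ is submersive, since that locus is nonempty (by surjectivity and generic smoothness) and dense; (4) combine (2) and (3) to get that $\alpha$ is submersive at every point. Surjectivity itself is handled separately, either by reproducing the elementary argument above or simply citing \cite{Zh05}.

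The main obstacle, and the step I would want to be most careful about, is the key lemma (step 2): one must check that "nef quotient of a nef bundle is nef" applies to the possibly-not-locally-free image sheaf, and that a nef subsheaf of $\mathcal O_C^{\oplus N}$ of degree $0$ is genuinely a subbundle with trivial quotient (this uses that over a smooth curve a torsion-free sheaf is locally free, that nefness forces $\deg \ge 0$ on every quotient, and the classification of numerically flat bundles on curves as extensions of degree-$0$ stable — but here a subsheaf of a trivial bundle with vanishing degree and all quotients nef is forced to be a direct summand trivial bundle). Equivalently one phrases it via the Harder–Narasimhan filtration: a nef bundle on a curve has all HN slopes $\ge 0$, and a map to a trivial bundle kills all positive-slope parts on the quotient side, leaving a slope-$0$ semistable piece which, mapping injectively to $\mathcal O_C^{\oplus N}$, must split off. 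Getting this clean — and making sure the constant-rank conclusion along $C$ really follows — is the crux; everything else is bookkeeping with the Albanese universal property and openness/density of the submersive locus.
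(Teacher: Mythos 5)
Your treatment of the submersion step is essentially sound: the image of $T_X\vert C_t \to \alpha^*(T_A)\vert C_t$ is a locally free quotient of a nef bundle sitting inside a trivial bundle, hence has degree zero, is saturated, and is in fact a trivial subbundle, so the rank of $d\alpha$ is constant along $C_t$. But that argument is explicitly conditional on surjectivity (you need the submersive locus to be nonempty, which in characteristic $0$ is equivalent to $\alpha$ being surjective), and it is precisely the surjectivity part that has a genuine gap. First, the claim that sufficient nefness of $T_X$ forces $-K_X$ to be nef is false: sufficient nefness only gives $-K_X\cdot C_t\ge 0$ on the curves of the covering families, i.e.\ a generic-positivity statement, and a recurring theme of the paper (see Example \ref{ex1}) is that such statements are strictly weaker than nefness, or even pseudo-effectivity, of a line bundle. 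So Qi Zhang's theorem cannot be invoked this way. Second, the elementary fallback --- ``$\alpha(C_t)$ is not a point, chain the families, the image generates $A$, hence $\alpha$ is surjective'' --- does not work: the image of an Albanese map always generates $A$ as a group yet need not be all of $A$ (a genus-$2$ curve in its Jacobian), and nothing in the chaining argument uses the nefness hypothesis to exclude $\dim\alpha(X)<q(X)$.

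The fix is the paper's own (much shorter) argument, which handles surjectivity and submersivity in one stroke: $\alpha$ fails to be a surjective submersion if and only if some nonzero holomorphic $1$-form $\omega$ vanishes, as a section of $\Omega^1_X$, at some point $x$. Indeed, if $\dim\alpha(X)<\dim A$ then $d\alpha_x$ is nowhere surjective, and since $\alpha^*$ is injective on $1$-forms one obtains a nonzero $\omega=\alpha^*\omega'$ vanishing at any chosen point; if $\alpha$ is surjective but not submersive at $x$, the same dualization gives such an $\omega$ at $x$. Now take the covering family $(C_t)$ through $x$ with $T_X\vert C_t$ nef; for general $t$ the curve $C_t$ is not contained in the zero locus of $\omega$, so $\omega\vert C_t$ is a nonzero section of $(T_X\vert C_t)^*$ vanishing at $x$, which exhibits a quotient line bundle of $T_X\vert C_t$ of negative degree and contradicts nefness. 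If you repair your surjectivity step by this $1$-form argument, your constant-rank lemma becomes redundant, since the same $1$-form argument already yields submersivity.
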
 

\begin{proof} (cp. \cite{Pe08}). If the Albanese map would not be a surjective submersion, then there exists a holomorphic 
$1-$form $\omega$ on $X$ vanishing at some point $x.$ Now choose a general curve $C$ from a covering family
through $x$ such that $T_X \vert C$ is nef. Then $\omega \vert C$ is a {\it non-zero} section of $T_X^* \vert C$ having a zero.
This contradicts the nefness of $T_X \vert C.$  
\end{proof} 

Of course, a part of the last proposition works more generally:

\begin{proposition} \label{easy} If $E$ is sufficiently nef and if $E^*$ has a section $s$, then $s$ does not have any zeroes.
\end{proposition}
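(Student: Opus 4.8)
The statement to prove is Proposition \ref{easy}: if $E$ is sufficiently nef and $E^*$ carries a nonzero section $s$, then $s$ has no zeroes. The plan is to imitate the argument just given for the Albanese proposition, stripping it down to its essential mechanism: a nonzero section with a zero restricts, on a suitable curve, to a nonzero section of the \emph{dual} of a nef bundle having a zero, which is impossible.

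First I would assume, for contradiction, that $s$ vanishes at some point $x \in X$. Since $E$ is sufficiently nef, by definition there is a covering family $(C_t)$ of curves through $x$ such that $E \vert C_t$ is nef for general $t$; fix such a general $t$ and write $C = C_t$. The key point is to guarantee that the restriction $s \vert C$ is \emph{not} identically zero. This is where I would use that $(C_t)$ is a covering family: the locus $\{ s = 0 \}$ is a proper closed subvariety of $X$ (it is proper because $s \not\equiv 0$), so for general $t$ the curve $C_t$ is not contained in $\{s = 0\}$; hence $s \vert C$ is a nonzero section of $E^* \vert C$. On the other hand, $s \vert C$ vanishes at $x \in C$. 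So $E^* \vert C$ has a nonzero section with a zero.

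It remains to observe that this contradicts the nefness of $E \vert C$. Indeed $E \vert C$ nef means $E^* \vert C$ has the property that every nonzero section must be nowhere vanishing — more precisely, a nef vector bundle on a smooth curve cannot have a quotient line bundle of negative degree, and a nonzero section of $E^* \vert C$ vanishing somewhere would produce (after saturating) a sub-line-bundle of $E^* \vert C$ of positive degree, i.e. a quotient line bundle of $E \vert C$ of negative degree, contradicting nefness. (Equivalently: pulling back to the normalization of $C$ if necessary, a nef bundle on a smooth projective curve is generated by global sections after twisting by any positive-degree line bundle, and the standard fact that sections of a nef bundle on a curve have no zeroes — used already in the proof of the Albanese proposition above — applies directly.) This completes the contradiction, so $s$ has no zeroes.

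The only genuinely delicate point is the second step: ensuring $s \vert C \not\equiv 0$ for a curve $C$ through the \emph{prescribed} point $x$. One must choose $C = C_t$ general enough in the covering family that it avoids being contained in $\{s=0\}$ while still passing through $x$; this is automatic once the family through $x$ genuinely covers $X$ (or even just dominates $X$), since then the generic member is not contained in any fixed proper subvariety. The rest is the same elementary observation about nef bundles on curves that underlies Miyaoka's theory and was already invoked above, so no further work is needed there.
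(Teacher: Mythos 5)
Your argument is correct and is exactly the mechanism the paper has in mind: Proposition \ref{easy} is stated as the general form of the Albanese argument, where a zero of $s$ at $x$ is restricted to a general member $C$ of the covering family through $x$, and the resulting nonzero section of $E^*\vert C$ with a zero contradicts the nefness of $E\vert C$. Your extra care about $s\vert C\not\equiv 0$ (the general $C_t$ is not contained in the proper closed set $\{s=0\}$ because the family covers $X$) and about quotient line bundles of negative degree is precisely the content the paper leaves implicit, so nothing further is needed.
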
 

We collect here some evidence that manifolds with nef anticanonical bundles have sufficiently nef tangent bundles and refer
to \cite{Pe08} for proofs.

\begin{theorem} Let $X$ be a projective manifold. 
\begin{itemize} 
\item If $E$ is a generically ample vector bundle, then $E$ is sufficiently ample.
\item If $-K_X$ is big and nef, then $T_X$ is sufficiently ample. 
\item If $-K_X$ is hermitian semi-positive, then $T_X$ is sufficiently nef. 
\end{itemize} 
\end{theorem}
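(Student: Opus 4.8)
The plan is to prove the first assertion by a ``pointed'' Mehta--Ramanathan restriction theorem, to deduce the second from the first, and to treat the third separately by structure theory together with an explicit construction of curves. For the first assertion, fix $x\in X$ and a polarization $h=(H_1,\dots,H_{n-1})$ for which $E$ is generically ample. The first observation is that $h$-generic ampleness is really a statement about the Harder--Narasimhan filtration: by Theorem~\ref{MR} the HN filtration of $E$ restricts to that of $E\vert C$ for $C$ MR-general w.r.t.\ $h$, and a bundle on a smooth curve is ample exactly when its minimal HN-slope is positive; hence $E$ is $h$-generically ample if and only if every torsion-free quotient of $E$ has strictly positive $h$-slope, i.e.\ $\mu_{h,\min}(E)>0$. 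Now let $\sigma\colon\widehat X\to X$ be the blow-up of $x$ with exceptional divisor $Z$, and set $\widehat h=(m_1\sigma^*H_1-Z,\dots,m_{n-1}\sigma^*H_{n-1}-Z)$, which is ample for $m_i\gg0$. For $m_i\gg0$ the HN filtration of $\sigma^*E$ with respect to $\widehat h$ is the saturation of the pull-back of the HN filtration of $E$, and its minimal slope is, up to lower-order corrections, a positive multiple of $\mu_{h,\min}(E)$, hence positive. Applying Theorem~\ref{MR} on $\widehat X$, the restriction $\sigma^*E\vert\widehat C=E\vert\sigma(\widehat C)$ to a general high-degree complete intersection curve $\widehat C$ w.r.t.\ $\widehat h$ has all HN-slopes positive, so it is ample; and the curves $\sigma(\widehat C)$ are precisely the general high-degree complete intersection curves of $X$ through $x$, which sweep out $X$ by Bertini once $m_i\gg0$. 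Thus $E$ is sufficiently ample.

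The second assertion is then immediate: by Theorem~\ref{thmgenample} the hypothesis that $-K_X$ is big and nef forces $T_X$ to be generically ample with respect to every polarization, so the first assertion applied to $E=T_X$ gives that $T_X$ is sufficiently ample.

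For the third assertion one cannot argue through generic nefness alone, since generic nefness of $T_X$ does not by itself produce nef restrictions through a prescribed point. Instead I would invoke the structure theory of manifolds with hermitian semi-positive anticanonical bundle (see \cite{Pe08}): after a finite \'etale cover the Albanese map $\alpha\colon X\to A$ is a surjective submersion onto an abelian variety $A$, analytically locally trivial, with rationally connected fibres $F$; in particular $T_A$ is trivial, so $\alpha^*T_A\cong\sO_X^{\oplus q}$ and the relative sequence $0\to T_{X/A}\to T_X\to\sO_X^{\oplus q}\to0$ shows $T_X\vert C$ is nef whenever $T_{X/A}\vert C$ is nef. Fix $x$ in the fibre $F_a$. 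Since $F_a$ is rationally connected there are very free rational curves $R\subset F_a$ through $x$, and for such $R$ one has $T_{X/A}\vert R=T_{F_a}\vert R$ ample, hence $T_X\vert R$ nef. As $A$ contains no rational curve, every rational curve through $x$ lies in $F_a$, so to obtain a family covering all of $X$ one forms chains $C=R\cup B\cup R'$, with $R\subset F_a$ a fixed very free rational curve through $x$, $B$ a connecting curve reaching a chosen fibre $F_b$, and $R'\subset F_b$ a very free rational curve; letting $b$ range over $A$ and $R'$ over a family covering $F_b$ produces curves through $x$ whose union is all of $X$, and $T_X$ is nef along the rational pieces by the above. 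It then remains to choose the connecting curves $B$ so that $T_{X/A}\vert B$ is nef, and this is where the local triviality of $\alpha$ and the hermitian semi-positivity of $-K_X$ (rather than merely its nefness) enter.

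The main obstacles are two. In the first assertion it is the pointed restriction theorem: one needs the HN filtration of $\sigma^*E$ with respect to $\widehat h$ to stabilize, for $m_i\gg0$, to the saturated pull-back of the HN filtration of $E$, uniformly over all potential destabilizing quotients, which is exactly where the triviality of $\sigma^*E$ along the exceptional divisor $Z$ is needed. In the third assertion it is the control of $T_{X/A}$ along the connecting curves $B$; this is the real crux, and the reason the full strength of hermitian semi-positivity is assumed rather than only $-K_X$ nef.
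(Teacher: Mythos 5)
A preliminary remark: the paper itself offers no proof of this theorem --- it is stated as ``evidence'' and all three items are explicitly deferred to \cite{Pe08} --- so there is no in-text argument to measure yours against; I can only judge the proposal on its merits.

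Your first two items are sound in outline. Reducing $h$-generic ampleness to $\mu_{h,\min}(E)>0$ via Mehta--Ramanathan together with the slope criterion for ampleness on a curve is correct; the blow-up polarization $m_i\sigma^*H_i-Z$ is the right device for producing complete intersection curves through $x$ (a general member of $|k_i(m_i\sigma^*H_i-Z)|$ does meet $Z$, since the restriction of $m_i\sigma^*H_i-Z$ to $Z$ is $\sO_Z(1)$), and the second item does follow formally from the first combined with Theorem \ref{thmgenample}. The one real issue is the one you flag yourself: you assert, but do not prove, that $\mu_{\hat h,\min}(\sigma^*E)>0$ for $m_i\gg 0$. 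Since $\hat h=\prod(m_i\sigma^*H_i-Z)=(\prod m_i)\,\sigma^*h-[\ell]$ as a curve class, the slope of a saturated subsheaf $\sS\subset\sigma^*E$ with $c_1(\sS)=\sigma^*c_1(\sS')+aZ$ picks up a correction term $a\ge 0$, and one must bound these corrections \emph{uniformly} over all potentially destabilizing $\sS$; this requires a Grothendieck-type boundedness statement for the saturated subsheaves of $E$ of $h$-slope at least $\mu_{h,\min}(E)$ and for the $Z$-coefficients of their saturated pull-backs. Until that is supplied, items one and two are a programme rather than a proof.

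The third item is where the argument genuinely breaks. You base it on the claim that, after a finite \'etale cover, a manifold with $-K_X$ hermitian semi-positive fibres over its Albanese torus with \emph{rationally connected} fibres. This is false: a K3 surface, or any Calabi--Yau manifold, has $-K_X=\sO_X$ hermitian semi-positive, trivial Albanese, and is not rationally connected; in general the Albanese fibres only inherit semi-positivity of $-K$, and no splitting into rationally connected and $K$-trivial pieces is available here (nor was it known at the time). The theorem must in particular produce, through every point of a K3 surface, a covering family of curves on which $T_X$ restricts nef --- a delicate statement, given that $\Omega^1_X\cong T_X$ of a K3 is \emph{not} almost nef by the corollary following Theorem \ref{almostnef} --- so a construction resting entirely on very free rational curves in Albanese fibres cannot be the mechanism. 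Moreover you leave the decisive step (nefness of $T_{X/A}$ along the connecting curves $B$) completely open, and it is exactly there that the metric hypothesis would have to enter. Note also that the blow-up trick of item one does not transfer for free to the nef case: when $\mu_{h,\min}(T_X)=0$ (as for a K3), the correction terms on $\hat X$ can make the minimal slope negative, which is presumably why hermitian semi-positivity, rather than the numerical conclusion of Theorem \ref{semi-ample}, is assumed. Item three therefore needs a different argument.
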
 

Notice however that a generically nef bundle need not be sufficiently nef; see \cite{Pe08} for an example
(a rank $2-$bundle on $\mathbb P_3$). 

\vskip .2cm We finally come back to Conjecture \ref{c2}. So suppose that $X$ is a projective manifold, let $L$ be numerically trivial
and consider a non-zero section
$$ v \in H^0(X,\bigwedge{^q}T_X \otimes L),$$
where $1 \leq q \leq \dim X-1.$
Applying Proposition \ref{easy}, Conjecture \ref{c2} is therefore a consequence of 

\begin{conjecture} \label{c4} Let $X$ be a non-uniruled projective manifold. Then $\Omega^1_X$ is sufficiently nef. 
\end{conjecture}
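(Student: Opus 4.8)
The plan is to prove Conjecture \ref{c4} by contradiction, combining Miyaoka's generic nefness (Theorem \ref{miy}), a Mehta--Ramanathan type extension of destabilizing subsheaves over a covering family of curves, and the pseudo-effectivity of torsion-free quotients of $\Omega^1_X$ (Theorem \ref{CP}) together with the duality of Theorem \ref{dualitytheorem}. \emph{Setup.} Fix a point $x \in X$ and a polarization $h = (H_1,\dots,H_{n-1})$. For $m_i \gg 0$ consider the family $(C_t)_{t\in T}$ of complete intersection curves $C_t = D_1 \cap \dots \cap D_{n-1}$ with $D_i \in |m_iH_i|$ subject to $x \in D_i$ for every $i$. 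A Bertini argument (valid for $m_i \gg 0$) shows that the general $C_t$ is irreducible and smooth, also at $x$, and that the family covers $X$: through any further point $y$ there is such a curve, since $|m_iH_i|$ separates $x$ and $y$. It therefore suffices to show that $\Omega^1_X|C_t$ is nef for general $t$. Note that all $C_t$ have the same numerical class $(\prod m_i)\,H_1\cdots H_{n-1}$, so imposing the base point $x$ changes no intersection number, only -- possibly -- the semistability of restrictions. Since $X$ is not uniruled, $\Omega^1_X$ is $h$-generically nef by Theorem \ref{miy}, which by Theorem \ref{MR} amounts to $\mu_{\max,h}(T_X)\leq 0$. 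If the Harder--Narasimhan filtration of $T_X|C_t$ were the restriction of the $h$-Harder--Narasimhan filtration of $T_X$ -- a Mehta--Ramanathan statement for point-constrained curves -- we would conclude at once that $T_X|C_t$ has no subsheaf of positive degree, hence that $\Omega^1_X|C_t$ is nef. Since this is not clear, we assume the contrary and derive uniruledness.

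\emph{Producing a saturated subsheaf.} Suppose $\Omega^1_X|C_t$ is not nef for general $t$. Dualizing, $T_X|C_t$ has $\mu_{\max} > 0$, so its maximal ample subsheaf $\sS_t$ is nonzero of positive degree, and the quotient $Q_t := (T_X|C_t)/\sS_t$ satisfies $\mu_{\max}(Q_t)\leq 0$. As $T$ is irreducible, the ranks and slopes occurring in the Harder--Narasimhan filtration are constant for general $t$, so the $\sS_t$ are the restrictions to the fibres of a subsheaf of $e^*T_X$ on an open subset of the universal curve $e:\mathcal U \to X$. The crucial step is to descend this to a \emph{saturated} subsheaf $\sS \subset T_X$ with $\sS\cdot C_t = \deg\sS_t > 0$. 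Granting this, $\sS$ is automatically a foliation: the O'Neill tensor $\bigwedge^2\sS \to T_X/\sS$, restricted to a general $C_t$, is a morphism from $(\bigwedge^2\sS)|C_t$ -- ample when $\rk\sS\geq 2$ because $\sS|C_t = \sS_t$ is ample, and zero when $\rk\sS = 1$ -- into $Q_t$, which has $\mu_{\max}\leq 0$; such a morphism vanishes, and since the $C_t$ pass through every point of $X$ the tensor vanishes identically.

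\emph{Deriving a contradiction.} Dualizing $0 \to \sS \to T_X \to T_X/\sS \to 0$ shows that $\Omega^1_X$ surjects, away from a set of codimension at least $2$, onto a torsion-free sheaf $Q'$ with $\det Q' = \det\sS^* = -\det\sS$; thus $Q'$ is a genuine torsion-free quotient of $\Omega^1_X$, and by Theorem \ref{CP} the class $-\det\sS$ is pseudo-effective. On the other hand $(-\det\sS)\cdot C_t = -\deg\sS_t < 0$, while $C_t$ is movable, so $[C_t]\in\overline{ME}(X)$; this contradicts the duality $\sPS(X) = \overline{ME}(X)^\vee$ of Theorem \ref{dualitytheorem}. (Alternatively, closer to the theme of the paper: since $\sS$ is a foliation with $\deg(\sS|C_t) > 0$ on the covering family $(C_t)$, the closure of a general leaf of $\sS$ is rationally connected by the Bogomolov--McQuillan theorem in the form of Kebekus--Sol\'a Conde--Toma; if $\rk\sS < n$ this makes $X$ uniruled, and if $\sS = T_X$ then $T_X|C_t$ is ample, so $X$ is rationally connected by the characterization of rationally connected manifolds recalled at the beginning of Section \ref{sec:4}.) Either way $X$ is uniruled, contrary to hypothesis, and the conjecture follows.

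\emph{The main obstacle.} The one genuinely delicate point is the descent in the second step. The complete intersection curves forced through the fixed point $x$ form a subfamily of codimension $n-1$ inside the full Mehta--Ramanathan family, so the classical Mehta--Ramanathan extension theorem does not apply directly; one needs an extension result over covering families in the style of Bogomolov--McQuillan, and -- most importantly -- one must verify that the subsheaf is constant along the fibres of the evaluation map $e:\mathcal U \to X$, that is, that all curves $C_t$ through a common point $x_0$ cut out the \emph{same} subspace $\sS_t|_{x_0}\subset T_{x_0}X$. Establishing this compatibility is, I expect, the heart of the matter, and presumably the reason the statement remains conjectural.
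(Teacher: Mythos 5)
This statement is posed as a conjecture in the paper, not a theorem: the author offers no proof, remarking only that it holds in dimension $2$ and when $K_X \equiv 0$ with $\Omega^1_X$ generically ample (by the methods of [Pe08], sect.~7). Your argument does not close the conjecture, and you have correctly located where it fails: the descent step. The Mehta--Ramanathan theorem, and with it the whole mechanism of Section~3 of the paper, applies to MR-general complete intersection curves, i.e.\ general members of $|m_1H_1|\cap\dots\cap|m_{n-1}H_{n-1}|$. The moment you force the curves through a fixed point $x$, they are no longer MR-general, and there is no known restriction theorem guaranteeing that the maximal ample subsheaves $\sS_t \subset T_X|C_t$ are the restrictions of a single saturated subsheaf $\sS \subset T_X$ -- nor even that the curves through a common point $x_0$ cut out the same subspace of $T_{x_0}X$. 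Everything downstream (the foliation structure, the contradiction via Theorem~\ref{CP} and the duality theorem, or via Kebekus--Sol\'a Conde--Toma) is sound \emph{conditional} on that descent, but the descent is the entire content of the conjecture, and you acknowledge as much in your final paragraph.

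There is also a structural reason to be pessimistic about this particular route: the paper points out (end of Section~5) that a generically nef bundle need not be sufficiently nef, citing a rank-$2$ bundle on $\mathbb{P}_3$ as a counterexample. So no formal argument that upgrades ``nef on MR-general curves'' to ``nef on a covering family through every point'' can work for an arbitrary bundle; any proof must exploit something specific to cotangent bundles of non-uniruled manifolds beyond Miyaoka's theorem. Your use of Theorem~\ref{CP} is the right kind of input, but it only enters \emph{after} the descent, whereas the counterexample shows the specificity must enter \emph{at} the descent. In short: this is a reasonable and well-organized reduction of the conjecture to a single sharply stated open problem, but it is not a proof, and the paper contains none to compare it with.
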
 

Conjecture \ref{c4} is true in dimension 2 (using \cite{Pe08}, sect.7 and Corollary \ref{cor10}), and also if $K_X \equiv 0$ 
and if $\Omega^1_X$ is generically ample, again by  \cite{Pe08}, sect.7.

\begin{acknowledgement}
I would like to thank N. Hitchin for very interesting discussions during the inspiring conference in Hannover, 
which were the starting point of this paper. Thanks also go to F. Campana for discussions on the subject; actually many 
topics discussed here are based on our 
collaboration. Finally I acknowledge the support of the DFG Forschergruppe ``Classification of algebraic surfaces and compact complex manifolds''. 
\end{acknowledgement}

\end{document}